\renewcommand{\citepunct}{;\ }
\numberwithin{equation}{subsection}
\let\c@subsubsection\c@equation
\newtheorem{theorem}[equation]{Theorem}
\newtheorem{conjecture}[equation]{Conjecture}
\newtheorem{corollary}[equation]{Corollary}
\newtheorem{lemma}[equation]{Lemma}
\newtheorem{proposition}[equation]{Proposition}
\newenvironment{customthm}[1]
  {\innercustomthm}
  {\endinnercustomthm}
\newtheoremstyle{cited}{.5\baselineskip\@plus.2\baselineskip\@minus.2\baselineskip}{.5\baselineskip\@plus.2\baselineskip\@minus.2\baselineskip}{\itshape}{}{\bfseries}{\bfseries .}{5pt plus 1pt minus 1pt}{\thmname{#1}\thmnumber{ #2}\thmnote{ \normalfont#3}}
\theoremstyle{cited}
\newtheorem{citedthm}[equation]{Theorem}
\newtheorem{citedconj}[equation]{Conjecture}
\newtheorem{citedprop}[equation]{Proposition}
\theoremstyle{definition}
\newtheorem{definition}[equation]{Definition}
\newtheorem{convention}[equation]{Convention}
\newtheorem{notation}[equation]{Notation}
\newtheoremstyle{citeddef}{.5\baselineskip\@plus.2\baselineskip\@minus.2\baselineskip}{.5\baselineskip\@plus.2\baselineskip\@minus.2\baselineskip}{}{}{\bfseries}{\bfseries .}{5pt plus 1pt minus 1pt}{\thmname{#1}\thmnumber{ #2}\thmnote{ \normalfont#3}}
\theoremstyle{citeddef}
\newtheorem{citeddef}[equation]{Definition}
\theoremstyle{remark}
\newtheorem{remark}[equation]{Remark}
\newtheoremstyle{step}{.25\baselineskip\@plus.1\baselineskip\@minus.1\baselineskip}{.25\baselineskip\@plus.1\baselineskip\@minus.1\baselineskip}{\itshape}{}{\bfseries}{\bfseries .}{5pt plus 1pt minus 1pt}{\thmname{#1}\thmnumber{ #2}\thmnote{ \normalfont(#3)}}
\theoremstyle{step}
\newtheorem{step}{Step}[equation]
\DeclareMathOperator{\Ann}{Ann}
\DeclareMathOperator{\Ass}{Ass}
\DeclareMathOperator{\SB}{\mathbf{B}}
\DeclareMathOperator{\Bplus}{\mathbf{B}_+}
\DeclareMathOperator{\Bs}{Bs}
\DeclareMathOperator{\Cl}{Cl}
\DeclareMathOperator{\Div}{Div}
\DeclareMathOperator{\Exc}{Exc}
\DeclareMathOperator{\Hom}{Hom}
\DeclareMathOperator{\Pic}{Pic}
\DeclareMathOperator{\Princ}{Princ}
\DeclareMathOperator{\Spec}{Spec}
\DeclareMathOperator{\Supp}{Supp}
\DeclareMathOperator{\WDiv}{WDiv}
\DeclareMathOperator{\WSh}{WSh}
\DeclareMathOperator{\Char}{char}
\DeclareMathOperator{\im}{im}
\DeclareMathOperator{\len}{length}
\DeclareMathOperator{\mult}{mult}
\DeclareMathOperator{\vol}{vol}
\newcommand{\bA}{\mathbf{A}}
\newcommand{\QQ}{\mathbf{Q}}
\newcommand{\RR}{\mathbf{R}}
\newcommand{\ZZ}{\mathbf{Z}}
\newcommand{\bk}{\mathbf{k}}
\newcommand{\cO}{\mathcal{O}}
\newcommand{\fa}{\mathfrak{a}}
\newcommand{\fb}{\mathfrak{b}}
\newcommand{\fm}{\mathfrak{m}}
\newcommand{\fq}{\mathfrak{q}}
\newcommand{\sF}{\mathscr{F}}
\newcommand{\sK}{\mathscr{K}}
\newcommand{\sL}{\mathscr{L}}
\newcommand{\eval}{\textup{eval}}
\newcommand{\jet}{\textup{jet}}
\newcommand{\red}{\textup{red}}
\providecommand\given{}
\newcommand\SetSymbol[1][]{\nonscript\:#1\vert\allowbreak\nonscript\:\mathopen{}}
\DeclarePairedDelimiterX\Set[1]\{\}{\renewcommand\given{\SetSymbol[\delimsize]}#1}
\newcommand{\hooklongrightarrow}{\lhook\joinrel\longrightarrow}
\newcommand{\longtwoheadrightarrow}{\mathrel{\text{\longtwo@rightarrow}}}
\newcommand{\longtwo@rightarrow}{%
  \sbox0{$\m@th\longrightarrow$}%
  \smash{\rlap{\kern0.175\wd0 \clipbox{{.3\width} {-\height} 0pt {-\height}}{$\m@th\longrightarrow$}}}%
  $\m@th\longrightarrow$%
}
\begin{document}
\title[Moving Seshadri constants and effective Fujita-type conjectures]{Moving
Seshadri constants and\\effective Fujita-type conjectures}
\author{Takumi Murayama}
\address{Department of Mathematics\\Purdue University\\150 N. University
Street\\West Lafayette, IN 47907-2067\\USA}
\email{\href{mailto:murayama@purdue.edu}{murayama@purdue.edu}}
\urladdr{\url{https://www.math.purdue.edu/~murayama/}}

\thanks{This material is based upon work supported by the National Science
Foundation under Grant No.\ DMS-2201251}
\subjclass[2020]{Primary 14C20; Secondary 14G17, 13A35}
\keywords{Fujita's conjecture, surfaces, Seshadri constants, basepoint-free,
positive characteristic}

\makeatletter
  \hypersetup{
    pdfsubject=\@subjclass,
    pdfkeywords=\@keywords
  }
\makeatother

\begin{abstract}
  Fujita's conjecture is known to be false in positive characteristic.
  We conjecture and give an approach to
  a new variant of Fujita's conjecture for the basepoint-freeness, very
  ampleness, and jet ampleness of linear systems of the form \(\lvert
  K_X+aK_X+bL \rvert\).
  We extend the theory of moving Seshadri constants, previously established
  for smooth
  complex projective varieties by Ein, Lazarsfeld, Musta\c{t}\u{a},
  Nakamaye, and Popa, to the more general setting of complete varieties
  over arbitrary fields.
  This theory is both an important component of our approach to this new 
  conjecture and of independent interest.
  Using our approach,
  we prove our variant of Fujita's conjecture
  for smooth surfaces in arbitrary characteristic and for
  smooth complex projective varieties of arbitrary dimension.
\end{abstract}

\maketitle
\setcounter{tocdepth}{1}
{\hypersetup{hidelinks}\tableofcontents}

\section{Introduction}\label{sect:intro}
Throughout this introduction, let \(X\) be a projective variety over an
algebraically closed field \(k\).
\subsection{Background}
In the 1980s, Fujita \cite{Fuj87,Fuj88} made the following conjecture.
\begin{citedconj}[{\cite[Conjecture 1]{Fuj88} (see also \cite[Conjecture on p.\
  167]{Fuj87})}]\label{conj:fujita}
  Let \(X\) be a smooth projective variety of dimension \(n\) over an
  algebraically closed field \(k\) and let \(L\) be an ample divisor on
  \(X\).
  We then have the following:
  \begin{enumerate}[label=\((\roman*)\)]
    \item \textnormal{(Fujita's freeness conjecture)} \(\bigl\lvert K_X+(n+1)L
      \bigr\rvert\) is basepoint-free.
    \item \textnormal{(Fujita's very ampleness conjecture)} \(\bigl\lvert K_X+(n+2)L
      \bigr\rvert\) is very ample.
  \end{enumerate}
\end{citedconj}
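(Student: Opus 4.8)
The general conjecture is open in dimensions $\ge 3$, so I would only attempt the case the paper is really about --- smooth projective surfaces ($n=2$), where (i) asks that $\lvert K_X+3L\rvert$ be basepoint-free and (ii) that $\lvert K_X+4L\rvert$ be very ample --- in arbitrary characteristic and without the classification of surfaces, hence without Kodaira-type vanishing or the rank-two Bogomolov inequality underlying Reider's theorem (both of which can fail in characteristic $p$). Since $1$-jet ampleness implies very ampleness, I would package (i), (ii), and the jet-ampleness statements into one assertion: for every $s\ge 0$, the linear system $\lvert K_X+sK_X+(s+3)L\rvert$ separates $s$-jets at every closed point of $X$, recovering (i) at $s=0$ and (ii) at $s=1$; in characteristic $p$ the target would be the stronger statement that Schwede's subsystem $S^0\bigl(X,\omega_X\otimes\cO_X(sK_X+(s+3)L)\bigr)$ already separates $s$-jets, which formally yields the claim for the full linear system.

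Fix a closed point $x\in X$. By the classical criterion (Demailly, Ein--K\"uchle--Lazarsfeld) that a sufficiently large moving Seshadri constant forces the adjoint bundle to separate jets, and by its Frobenius-theoretic analogue through $S^0$ due to Musta\c{t}\u{a}--Schwede in characteristic $p$, it suffices that the moving Seshadri constant $\varepsilon\bigl(\|(s+3)L\|;x\bigr)$ --- respectively the Frobenius--Seshadri constant $\varepsilon_F\bigl((s+3)L;x\bigr)$ --- exceed $\dim X+s=s+2$. Since $(s+3)L$ is ample these equal $(s+3)\,\varepsilon(L;x)$ and its Frobenius version, so one wants $\varepsilon(L;x)>\tfrac{s+2}{s+3}$, a lower bound that is \emph{false} at special points of general surfaces (Seshadri constants can be arbitrarily small). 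Hence the first step is a Reider-type dichotomy: either $\varepsilon\bigl((s+3)L;x\bigr)>s+2$ and we are done, or --- the alternative $\sqrt{(s+3)^2L^2}\le s+2$ being impossible since $L^2\ge 1$ --- there is a reduced irreducible ``Seshadri-exceptional'' curve $C$ through $x$ with $(s+3)\,L\cdot C\le(s+2)\,\mult_x C$. This forces $\mult_x C\ge 2$ and, combining $L\cdot C\ge 1$, the Hodge index inequality $L^2\cdot C^2\le(L\cdot C)^2$, and the genus bound $p_a(C)\ge\binom{\mult_x C}{2}$ from the $\delta$-invariant (valid in every characteristic), restricts $C$ to finitely many numerical types.

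The second step handles such a $C$ by restriction and induction on dimension. Writing the adjoint bundle adjunction-style, $\cO_X(K_X+(s+3)L)=\cO_X(K_X+C)\otimes\cO_X((s+3)L-C)$, and restricting to $C$, separation of $s$-jets at $x$ on $X$ follows once (a) the restricted bundle on $C$ separates the relevant scheme structure at $x$ and (b) these sections lift from $C$ to $X$. Part (a) is Riemann--Roch on the normalization $\nu\colon\widetilde C\to C$: a line bundle there of degree at least $2g(\widetilde C)+s$ plus a correction bounded by $\mult_x C$ separates the pullback of an $s$-jet at $x$, and the constraints from the first step make $\deg\bigl(\cO_X(K_X+(s+3)L)\rvert_C\bigr)=2p_a(C)-2-C^2+(s+3)\,L\cdot C$ large enough relative to $p_a(C)$ and $\mult_x C$; this needs a short case analysis over the numerical types but, with $L^2\ge 1$, holds with room to spare and is not the main difficulty. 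Part (b) is characteristic-sensitive: in characteristic $0$ it is Kawamata--Viehweg vanishing for $\cO_X(K_X+(s+3)L-C)$, big and nef after the usual perturbation; in characteristic $p$ one instead needs that $S^0$ of the adjoint bundle on $X$ surjects onto $S^0$ of its restriction to $C$, i.e.\ a compatibility of the trace of Frobenius with restriction to $C$ --- and it is exactly here that a theory of moving and Frobenius--Seshadri constants for \emph{singular} varieties is required, since the curves $C$ are typically singular and the inductive hypothesis on $C$ must be stated in those terms; such singular-variety results are new even over $\mathbf{C}$.

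The main obstacle is Part (b) in characteristic $p$: proving the Frobenius-compatible restriction theorem --- roughly, that $\varepsilon_F(L\rvert_C;x)$ is bounded below in terms of $\varepsilon_F(L;x)$, with the corresponding surjectivity of $S^0$ of the adjoint bundle on $X$ onto $S^0$ of its restriction to $C$ --- without Kawamata--Viehweg vanishing or the Bogomolov inequality. Everything else is either classical input (the jet-separation criteria, Riemann--Roch on curves, the $\delta$-invariant bound) or finite numerical bookkeeping on the surface; the new content, and the reason the paper must develop Seshadri, moving Seshadri, and Frobenius--Seshadri constants anew, is making this reduction-to-a-curve step function in the singular, positive-characteristic setting.
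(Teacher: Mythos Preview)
The statement you are attempting to prove is Fujita's original conjecture, which the paper states only as background and does \emph{not} prove. In fact, the paper explicitly recalls that Gu, Zhang, and Zhang \cite{GZZ22} constructed counterexamples to both parts of Fujita's conjecture for surfaces in every characteristic \(p>0\). So the target you set yourself---showing \(\lvert K_X+3L\rvert\) is basepoint-free and \(\lvert K_X+4L\rvert\) is very ample for smooth surfaces in arbitrary characteristic---is false, and your proposed argument must break somewhere. Concretely, your ``Part (b)'' lifting step via an \(S^0\)-restriction theorem cannot hold in the generality you need: the counterexamples in \cite{GZZ22} are precisely surfaces where the relevant Frobenius trace maps fail to be surjective enough to lift sections, so no classification-free Frobenius-compatible restriction theorem of the strength you describe can exist.

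What the paper actually proves for surfaces is Theorem~A: the linear system \(\lvert K_X+(4+2\ell)K_X+(17+8\ell)L\rvert\) is \(\ell\)-jet ample, with the extra copies of \(K_X\) and \(L\) being the price paid for working in arbitrary characteristic. The method is also quite different from your Reider-type dichotomy and restriction-to-a-curve induction. The paper argues directly on \(X\): Mori's cone theorem makes \(K_X+4L\) ample; a Bauer--Szemberg-type result (Theorem~\ref{thm:bslowerbound} and Corollary~\ref{cor:seshlowerboundadj}) gives the uniform lower bound \(\varepsilon(K_X+4L;x)\ge\tfrac12\) for every \(x\); scaling then yields \(\varepsilon\bigl(2(2+\ell)(K_X+4L)+A;x\bigr)>2+\ell\); and Theorem~\ref{thm:introseshjetample} (proved via Frobenius--Seshadri constants and reduction mod \(p\)) converts this into \(\ell\)-jet ampleness of the adjoint. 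There is no restriction to Seshadri-exceptional curves and no inductive lifting from a curve; the new Seshadri/Frobenius--Seshadri theory for singular varieties is used to handle singular \(X\) in Theorem~\ref{thm:introseshjetample} and Theorem~\ref{thm:s0singularsurface}, not to run an induction on dimension inside the surface case.
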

When \(\Char(k) = 0\), Fujita's freeness conjecture is known in dimensions \(\le 5\)
\citeleft\citen{Rei88}\citepunct \citen{EL93a}\citepunct \citen{Fuj93}\citepunct
\citen{Kaw97}\citepunct \citen{YZ20}\citeright\
and Fujita's very ampleness conjecture
is known in dimensions \(\le2\) \cite{Rei88}.
Moreover, it is known that \(\lvert K_X+f(n)\mkern1muL \rvert\) is basepoint-free
for a function \(f(n)\) of order \(n^2\) \cite{AS95,Hel97,Hel99},
of order \(n^{4/3}\) \cite{Hei02}, or of order \(n\cdot\log(\log(n))\) \cite{GL24} in
\(n\).\medskip
\par Without characteristic assumptions on \(k\), Fujita's conjectures are known
when \(n = 1\) and when \(L\) is both ample and globally generated
\cite{EL93syz,Smi97,Kee08}, for example, when \(X\) is toric \cite{Mus02}.
For surfaces (\(n=2\)), Fujita's freeness conjecture is known when \(X\) is
not of general type \cite{Eke88,SB91,Che21,Zha23}.
For surfaces of general type, it is
known that \(\lvert 2K_X+4L \rvert\) is basepoint-free when
\(\Char(k) \ge 3\) and that \(\lvert 2K_X+19L \rvert\) is basepoint-free
when \(\Char(k) = 2\) \cite{DCF15}.
\subsection{A Fujita-type conjecture in arbitrary
characteristic}\label{sect:introconj}
Despite these positive results towards Fujita's conjectures in positive
characteristic, Gu, Zhang, and Zhang \cite{GZZ22} constructed
counterexamples to Fujita's conjectures for surfaces in every characteristic
\(p > 0\).
Motivated by these counterexamples and the existing results for surfaces, we
conjecture new variants of Fujita's Conjecture \ref{conj:fujita}.\medskip
\par Before stating our conjecture, 
we define a higher-order version of basepoint-freeness and very ampleness as
follows.
\begin{citeddef}[{\cite[pp.\ 357--358]{BS93}}]
  Let \(X\) be a complete variety of dimension \(n\) over an arbitrary
  field \(k\).
  Let \(L\) be a \(\ZZ\)-invertible sheaf on \(X\).
  Let \(Z \coloneqq \{x_1,x_2,\ldots,x_r\}\) be a finite set of distinct closed
  points in \(X\).
  We say that \(L\) is \textsl{\(\ell\)-jet ample at \(Z\)} if for all
  non-negative integers \(\ell_1,\ell_2,\ldots,\ell_r\) such that
  \(\ell+1=\sum_{i=1}^r\ell_i\), the
  restriction map
  \[
    H^0\bigl(X,\cO_X(L)\bigr) \longrightarrow H^0\biggl(X,\cO_X(L)
    \otimes_{\cO_X} \cO_X\bigg/\prod_{i=1}^r \fm_{x_i}^{\ell_i}\biggr)
  \]
  is surjective.
  We say that \(L\) is \textsl{\(\ell\)-jet ample} if it is \(\ell\)-jet ample
  at every finite set \(Z\) of distinct closed points in \(X\).
  We use the same terminology for linear systems.
\end{citeddef}
Note that \(\lvert L \rvert\) is basepoint-free if and only if it is \(0\)-jet
ample and, when \(k\) is algebraically closed, \(\lvert L \rvert\)
is very ample if and only if it is \(1\)-jet ample \cite[Chapter II,
Proposition 7.3]{Har77}.\medskip
\par With this terminology, we conjecture the following variants of Fujita's
Conjecture \ref{conj:fujita} in arbitrary characteristic.
\begin{conjecture}\label{conj:fujitavariant}
  Let \(X\) be a smooth projective variety of dimension \(n\)
  over an algebraically closed field \(k\).
  Let \(L\) be an ample divisor on \(X\).
  Let \(\ell \ge 0\) be an integer.
  \begin{enumerate}[label=\((\textup{\Roman*})\),ref=\textup{\Roman*}]
    \item\label{ques:anbn} There exist integer-valued polynomial
      functions \(a_\ell(n)\) and \(b_\ell(n)\)
      only depending on \(n\) such that
      \[
        \bigl\lvert K_X+a_\ell(n)\mkern1muK_X+b_\ell(n)\mkern1muL \bigr\rvert
      \]
      is \(\ell\)-jet ample.
    \item\label{ques:anbns0}
      The functions \(a_\ell(n)\) and \(b_\ell(n)\) in \((\ref{ques:anbn})\) can
      be chosen such that if \(\Char(k) > 0\), the canonical linear system
      defined by Schwede \textnormal{\cite{Sch14}}
      \begin{equation}\label{eq:stablesectionsintro}
        \biggl\lvert S^0\Bigl(X,\omega_X \otimes_{\cO_X}
        \cO_X\bigl(a_\ell(n)\mkern1muK_X+b_\ell(n)\mkern1muL\bigr)\Bigr) \biggr\rvert
      \end{equation}
      is \(\ell\)-jet ample.
  \end{enumerate}
\end{conjecture}
\par Schwede's canonical linear system in \((\ref{ques:anbns0})\) is important
because global sections in \(S^0(X,\omega_X \otimes_{\cO_X} \cO_X(\,\cdot\,))\)
lift from subschemes as though the Kodaira vanishing
theorem were true, and hence are amenable to inductive proofs
\cite[Proposition 5.3]{Sch14}.
Producing sections in \(S^0(X,\omega_X \otimes_{\cO_X} \cO_X(\,\cdot\,))\)
is crucial for the
minimal model program in positive characteristic, for example in
\cite{HX15}.\medskip
\par When \(\Char(k) = 0\), Conjecture
\ref{conj:fujitavariant}\((\ref{ques:anbn})\) is known in some cases.
For surfaces, this is proved in \citeleft\citen{Rei88}\citepunct
\citen{BS93}\citepunct \citen{Laz97}\citemid Theorem 7.4\citeright.
For arbitrary \(n\), 
explicit functions \(a_\ell(n)\) and \(b_\ell(n)\)
were shown to exist in \cite{Dem93,Kol93,Siu96,Siu95,Dem96}, which also prove that
one can even set \(a_1(n) = 1\).
However, the known functions \(a_\ell(n)\) and \(b_\ell(n)\) are not polynomial
in \(n\) unless \(\ell \le 1\).
When \(\ell \ge 2\), the direct analogue of Fujita's conjecture
asking whether \(\ell\)-jet ampleness holds for \(a_\ell(n) = 0\) and
\(b_\ell(n) = n+\ell+1\) is known to be false \cite[Remark 1.7]{EKL95}.
As far as we are aware, there is no conjecture in the literature with explicit
conjectures for what \(a_\ell(n)\) and \(b_\ell(n)\) might look like in general,
even when \(\Char(k) = 0\).\medskip
\par When \(\Char(k) = p> 0\), much less is known.
Existing results on Fujita's conjecture for curves answer \((\ref{ques:anbn})\)
and Schwede answered \((\ref{ques:anbns0})\) for curves in \cite[Theorem
3.3]{Sch14}.
Schwede \cite[p.\ 71]{Sch14} then asked whether \((\ref{ques:anbns0})\)
holds for \(a_0(n) = a_1(n) = 0\), \(b_0(n) = n+1\), and \(b_1(n) = n+2\).
While Schwede's question was answered in the negative by \cite{GZZ22},
question \((\ref{ques:anbn})\) for arbitrary \(\ell\) still holds
for surfaces that are not of general type
\citeleft\citen{Ter99}\citepunct \citen{Zha23}\citeright.
As far as we are aware, when \(\Char(k) = p> 0\), 
there are no results in the literature for
\begin{enumerate*}[label=\((\arabic*)\)]
  \item surfaces of general type when \(\ell > 0\), or
  \item projective varieties of arbitrary dimension for any \(\ell\).
\end{enumerate*}

\subsection{Main results}
In this paper, we give an approach to Conjecture \ref{conj:fujitavariant} using
moving Seshadri constants.
To do so, we extend the theory of moving Seshadri constants to complete
varieties over arbitrary fields.
Previously, the theory has only been developed for smooth complex projective
varieties, in which case the theory is due to Ein, Lazarsfeld, Musta\c{t}\u{a},
Nakamaye, and Popa \cite{ELMNP09}.
We then use our approach to prove Conjecture \ref{conj:fujitavariant} for
smooth surfaces in arbitrary characteristic and for smooth complex
projective varieties of arbitrary dimension.
\subsubsection{Moving Seshadri constants}
The important ingredient in our approach to Conjecture \ref{conj:fujitavariant}
is the following result, which says that lower bounds on moving Seshadri
constants imply \(\ell\)-jet ampleness of adjoint-type divisors.
Proving this result requires extending theory of moving Seshadri constants to
complete varieties over arbitrary fields.
See Definition \ref{def:movingsesh} for the definition of moving Seshadri
constants, which are a generalization of Seshadri constants that make sense for
divisors that are not necessarily nef.
Theorem \ref{thm:introseshjetample}
generalizes results for big and nef divisors on smooth projective varieties
\cite{Dem92,ELMNP09,MS14,Mur18}.
\par The singular case of Theorem \ref{thm:introseshjetample} is
new even over the complex numbers.
When \(\ell > 0\), Theorem \ref{thm:introseshjetample} is new even for
\emph{smooth} complex projective varieties.
Existing results for smooth projective varieties such as
\citeleft\citen{BS93}\citemid Corollary 3.3\citepunct \citen{Lan99}\citemid
Proposition 4.1(1) and Corollary 4.3\citepunct \citen{Laz04a}\citemid
Proposition 5.1.19\citepunct \citen{MS14}\citemid Theorem 3.1\((iv)\)\citepunct
\citen{SZ20}\citemid Theorem 3.13\citeright\ impose
stronger conditions on \(L\): Either \(L\) is ample and globally generated, in
which case \(K_X+(n+\ell)L+A\) is \(\ell\)-jet ample for every ample divisor
\(A\), or \(L\) is a big and nef divisor such that \(\varepsilon(L;x) > 2n\), in
which case \(K_X+L\) is very ample.
Even when \(X\) is smooth,
we do not know of a characteristic zero proof of Theorem
\ref{thm:introseshjetample} that does not use reduction modulo
\(p\) techniques.
\begin{customthm}{\ref{thm:introseshjetample}}
  Let \(X\) be a normal projective variety of dimension \(n\) over an arbitrary
  field \(k\) and let \(L\) be a Cartier divisor on \(X\).
  Let \(\ell\) be a non-negative integer.
      Let \(Z = \{x_1,x_2,\ldots,x_r\}\) be closed points in \(X\) such that \(X\)
      is of dense \(F\)-injective type (if \(\Char(k) = 0\)) or \(F\)-injective (if
      \(\Char(k) = p > 0\)) at every \(x_i\).
      Suppose that
      \[
        \varepsilon\bigl(\lVert L \rVert;x_i\bigr) > n+\ell
      \]
      for every \(i\).
      Then, \(\omega_X \otimes_{\cO_X}
      \cO_X(L)\) is \(\ell\)-jet ample at \(Z\).
  Moreover, if \(k\) is an \(F\)-finite field of characteristic \(p > 0\),
  then the linear system
  \[
    \Bigl\lvert S^0\bigl(X,\omega_X \otimes_{\cO_X} \cO_X(L)\bigr) \Bigr\rvert
  \]
  is \(\ell\)-jet ample at \(Z\).
\end{customthm}
In characteristic zero, rational
singularities are of dense \(F\)-injective type
\citeleft\citen{Har98}\citemid Theorem 1.1\citepunct \citen{MS97}\citemid
Theorem 1.1\citeright.
In positive characteristic,
\(F\)-injectivity is the analogue of Du Bois singularities
\cite[Theorem 6.1]{Sch09DB}.\medskip
\par To prove Theorem \ref{thm:introseshjetample}, we extend the theory of
moving Seshadri constants to complete varieties over arbitrary fields.
We combine this theory with the
theory of Frobenius--Seshadri constants introduced by Musta\c{t}\u{a} and
Schwede \cite{MS14} and further developed by the author of this paper in
previous work \cite{Mur18}.
\subsubsection{Our approach to Conjecture
\ref{conj:fujitavariant}}\label{subsect:ourapproach}
Theorem \ref{thm:introseshjetample} says that to show Conjecture
\ref{conj:fujitavariant}, it suffices to show the following:
\begin{conjecture}\label{conj:seshadjlower}
  Let \(X\) be a smooth projective variety of dimension \(n\)
  over an algebraically closed field
  \(k\) and let \(L\) be an ample divisor.
  There exist integer-valued
  functions \(c(n)\) and \(d(n)\) and a real-valued function \(e(n)\), all
  depending only on \(n\), such that
  \[
    \varepsilon\bigl(\bigl\lVert K_X+c(n)\mkern1muK_X+d(n)\mkern1muL
    \bigr\rVert;x\bigr)
    \ge \frac{1}{e(n)}
  \]
  for every closed point \(x \in X\).
\end{conjecture}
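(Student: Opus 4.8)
The strategy is to generalize the surface argument behind Corollary~\ref{cor:seshlowerboundadj}. First reduce to an ample divisor: if $C\subset X$ spans a $K_X$-negative extremal ray of the Mori cone $\overline{NE}(X)$, then Mori's cone theorem --- valid in arbitrary characteristic --- gives $0<-K_X\cdot C\le n+1$, while $L\cdot C\in\ZZ_{>0}$ since $L$ is an ample Cartier divisor; hence $(K_X+(n+1)L)\cdot C\ge 0$, so $K_X+(n+1)L$ is nef and $(1+c(n))K_X+d(n)L$ is ample once $d(n)>(1+c(n))(n+1)$. Taking $c(n)=0$, it suffices to produce an integer $d(n)\ge n+2$ and a real $e(n)\ge 1$ such that $D\coloneqq K_X+d(n)L$ has $\varepsilon(D;x)\ge 1/e(n)$ at every closed point $x$. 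By the standard formula $\varepsilon(D;x)=\inf_{C\ni x}\frac{D\cdot C}{\mult_x C}$, this is the bound
\[
  \mult_x C\ \le\ e(n)\bigl(K_X\cdot C+d(n)\,(L\cdot C)\bigr)\qquad\text{for every irreducible curve }C\ni x,
\]
in which the term $K_X\cdot C$ must be retained: the purpose of an adjoint divisor is precisely that a large multiplicity forces $K_X\cdot C$ to be large.

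On a smooth surface this bound, with $e(2)=1$, is Corollary~\ref{cor:seshlowerboundadj}: if $(K_X+d(n)L)\cdot C<\mult_x C$, then the genus--multiplicity inequality $p_a(C)\ge\binom{\mult_x C}{2}$ and adjunction force $C^2>0$, whereupon the Hodge index theorem gives $(K_X+d(n)L)^2<1$, contradicting the ampleness of $K_X+d(n)L$ (self-intersection $\ge1$ on a surface). In dimension $\ge 3$ one would like to reduce to this case by restricting to a smooth surface $S$ through $x$ that contains $C$ and is cut out of $X$ by members of $\lvert L\rvert$, so that $D|_S$ becomes $K_S+4L|_S$; but $\lvert L\rvert$ need not be very ample, or even basepoint-free, and the multiple of $L$ whose general members furnish such an $S$ is not bounded in terms of $n$. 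Over a field of characteristic zero one circumvents this through the Angehrn--Siu--Demailly machinery --- cut out an isolated point of large multiplicity in a log resolution and lift sections using Nadel or Kawamata--Viehweg vanishing --- which yields the bound with some $e(n)$ of the same order as the known effective Fujita functions; in particular Conjecture~\ref{conj:seshadjlower} already holds when $\Char(k)=0$, since there $\lvert K_X+c(n)K_X+d(n)L\rvert$ can be made very ample.

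The main obstacle is therefore the case $\Char(k)=p>0$ with $n\ge 3$: the vanishing theorems powering the characteristic-zero argument fail, and the surface proof has no elementary substitute in dimension $\ge 3$. A genuine proof would need a Frobenius replacement --- a multiplicity bound for $C$, or a boundedness statement for the submaximal subvarieties computing $\varepsilon(D;x)$, extracted from the test-ideal and $S^0$ techniques already used in this paper (cf.\ the input behind Theorem~\ref{thm:introseshjetample}) and valid in arbitrary characteristic. I expect producing such a bound to be the essential difficulty, and it cannot be much easier than the very-ampleness half of conjecture~(\ref{ques:anbn}): a very ample Cartier divisor $D$ automatically has $\varepsilon(D;x)\ge 1$ at every point, because embedding $X$ by $\lvert D\rvert$ and intersecting an irreducible curve $C\ni x$ with a general hyperplane through $x$ gives a length-$(\deg C)$ scheme whose local length at $x$ is at least $\mult_x C$, so $D\cdot C=\deg C\ge\mult_x C$.
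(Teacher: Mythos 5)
You are proposing an approach to what the paper states as an open conjecture: the paper itself does not prove Conjecture~\ref{conj:seshadjlower} in general, but only the surface case (Theorem~\ref{thm:bslowerbound} and Corollary~\ref{cor:seshlowerboundadj}, giving \(c(2)=0\), \(d(2)=4\), \(e(2)=2\)), and it records that the characteristic-zero case is known from effective Fujita-type results (Bauer--Szemberg's argument from freeness, or, as you note, effective very ampleness plus the fact that very ample divisors have Seshadri constant at least \(1\)). Your sketch matches this outline --- reduction to an ample adjoint divisor via the cone theorem, the surface case via the genus--multiplicity bound, adjunction and Hodge index, characteristic zero via vanishing-theorem technology --- and you correctly isolate the genuinely open content as \(\Char(k)=p>0\), \(n\ge 3\). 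But by your own admission this is not a proof, so it cannot be counted as one; it is a correct description of the state of the problem with one substantive error noted below.

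The error is your claim that the surface case holds with \(e(2)=1\), via the assertion that \((K_X+d(2)L)\cdot C<\mult_x C\) forces \(C^2>0\) and then \((K_X+d(2)L)^2<1\). Write \(d\coloneqq (K_X+d(2)L)\cdot C\) and \(m\coloneqq \mult_x C\). Adjunction plus \(p_a(C)\ge\binom{m}{2}\) only gives \(C^2\ge m^2-2m-1\), which is \(\ge -1\) when \(m=2\), so the step ``\(C^2>0\)'' already fails there; and even when \(C^2>0\), the Hodge index theorem gives \((K_X+d(2)L)^2\le d^2/C^2\), which need not be \(<1\) since \(C^2>d^2\) is not forced. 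The correct conclusion of this argument, as in Theorem~\ref{thm:bslowerbound}, is only the chain \(m(m-1)/2\le p_a(C)\le 1+d(d+1)/2\), which together with \(d\le m-1\) pins down \(d=m-1\); this permits \(\varepsilon=(m-1)/m\), in particular \(\varepsilon=1/2\), and the paper points out that \(1/2\) is sharp by \cite[Examples 4.3 and 4.4]{BS11}, so no argument can yield \(e(2)=1\). This does not affect your (correct) diagnosis that the essential difficulty is a positive-characteristic multiplicity or submaximal-curve bound in dimension \(\ge 3\), but any quantitative use of the surface case must be made with the constant \(1/2\), as in the proofs of Theorems~\ref{thm:smooths0ljetample} and~\ref{thm:s0singularsurface}.
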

We prove Conjecture \ref{conj:seshadjlower} for surfaces (\(n=2\)).
Over the complex numbers, this statement is due to Bauer and Szemberg
\cite[Corollary 4.2]{BS11}.
For the statement below, note that for ample divisors, the moving Seshadri
constant \(\varepsilon(\lVert L \rVert;x)\) is equal to the Seshadri constant
\(\varepsilon(L;x)\) (Proposition
\ref{prop:elmnp0963}\((\ref{prop:elmnp0963iv})\)).
\begin{customthm}{\ref{cor:seshlowerboundadj}}
  Let \(X\) be a smooth projective surface over an algebraically closed field and
  let \(L\) be a nef divisor on \(X\) such that \(K_X+L\) is ample.
  Then,
  \[
    \varepsilon(K_X+L;x) \ge \frac{1}{2}
  \]
  for every closed point \(x \in X\).
  In particular, for any ample divisor \(L\) on \(X\), we have
  \[
    \varepsilon(K_X+4L;x) \ge \frac{1}{2}.
  \]
\end{customthm}
When \(\Char(k) = 0\),
Bauer and Szemberg \cite[\S3]{BS11} answered Conjecture \ref{conj:seshadjlower}
in the affirmative using existing results on Fujita's freeness conjecture.
We improve their bounds using the progress on 
Fujita's freeness conjecture in \cite{Rei88,Kaw97,YZ20,GL24}.
\begin{customthm}{\ref{thm:lowerboundsoverc}}
  Let \(X\) be a smooth complex projective variety of dimension \(n\).
  Let \(L\) be a nef divisor on \(X\) such that \(K_X+L\) is ample.
  For every closed point \(x \in X\), we have
  \[
    \varepsilon(K_X+L;x) \ge \begin{cases}
      \hfil \dfrac{1}{n+2} & \text{when}\ n \le 5,\\[1em]
      \hfil \dfrac{1}{9} & \text{when}\ n = 6,\\[1em]
      \dfrac{1}{\Bigl\lfloor
      n\Bigl(\log\bigl(\log(n)\bigr)+2.34\Bigr)
      \Bigr\rfloor + 2}
      & \text{for arbitrary}\ n.
    \end{cases}
  \]
\end{customthm}

\subsubsection{Conjecture \ref{conj:fujitavariant} for
smooth surfaces in arbitrary characteristic and for smooth complex projective
varieties}
\par Using Theorems \ref{thm:introseshjetample},
we prove our new variant of Fujita's conjecture \ref{conj:fujitavariant} for
smooth surfaces in arbitrary characteristic and for smooth complex projective
varieties.
For surfaces, we prove:
\begin{customthm}{\ref{thm:smooths0ljetample}}
  Let \(X\) be a smooth projective surface over an algebraically closed field
  \(k\) and let \(L\) be an ample divisor on \(X\).
  For every integer \(\ell \ge 0\), the linear system
  \[
    \bigl\lvert K_X+(4+2\ell)K_X+(17+8\ell)L \bigr\rvert
  \]
  is \(\ell\)-jet ample.
  If \(\Char(k) = p > 0\), then in fact, the linear system
  \[
    \biggl\lvert S^0\Bigl(X,\omega_X \otimes_{\cO_X}
    \cO_X\bigl((4+2\ell)K_X+(17+8\ell)L\bigr)\Bigr) \biggr\rvert
  \]
  is \(\ell\)-jet ample.
\end{customthm}
\par Theorem \ref{thm:smooths0ljetample} suggests that the
functions \(a_\ell(n)\) and \(b_\ell(n)\) in Conjecture \ref{conj:fujitavariant}
may be characteristic independent for arbitrary \(n\).\medskip
\par Over the complex numbers, we prove the following result.
Previously, with notation as below,
Demailly proved \(\ell\)-jet ampleness of divisors of the form
\(2K_X+mL\) where \(m\) is exponential in \(n\) \cite[Theorem 0.2]{Dem96}.
\begin{customthm}{\ref{thm:fujitavariantcomplex}}
  Let \(X\) be a smooth complex projective variety of dimension \(n\).
  Fix an integer \(\ell \ge 0\).
  For all ample divisors \(L\) and \(A\) and every integer
  \[
    m \ge \begin{cases}
      \hfil n+2 & \text{when}\ n \le 5,\\
      \hfil 8 & \text{when}\ n = 6,\\
      n\Bigl(\log\bigl(\log(n)\bigr)+2.34\Bigr)
      & \text{for arbitrary}\ n,
    \end{cases}
  \]
  the linear system
  \[
    \bigl\lvert K_X+(n+\ell)(K_X+mL)+A\bigr\rvert
  \]
  is \(\ell\)-jet ample.
  In particular, the linear system
  \[
    \biggl\lvert
    K_X+(n+\ell)\Bigl(K_X+n\Bigl(\log\bigl(\log(n)\bigr)+2.34\Bigr)L\Bigr)+L
    \biggr\rvert
  \]
  is \(\ell\)-jet ample.
\end{customthm}
\begin{remark}
  An alternative approach to Conjecture \ref{conj:fujitavariant} for surfaces
  is to combine the
  results in \cite{Eke88,SB91,DCF15,Che21}
  with \cite[Theorem 3.13]{SZ20} (and its
  proof) to show that
  \[
    \biggl\lvert S^0\Bigl(X,\omega_X \otimes_{\cO_X}
    \cO_X\bigl((4+2\ell)K_X+(39+19\ell)L\bigr)\Bigr) \biggr\rvert
  \]
  is \(\ell\)-jet ample.
  However, such an approach uses the classification of surfaces.
  Our approach does not rely on the classification of surfaces.
\end{remark}
\subsection*{Outline}
We review some preliminaries on \(\bk\)-divisors, \(\bk\)-invertible sheaves,
and base loci in \S\ref{sect:prelim}.
An important new result is Theorem \ref{prop:kur13prop27}, which describes
how \(\Bplus(D)\) is the locus where \(D\) is ample.
\par In \S\ref{sect:seshadri}, we extend the theory of moving Seshadri constants to
complete varieties over arbitrary fields.
A major new result is that moving Seshadri constants can be described in terms
of jet separation on normal complete varieties over arbitrary fields (Theorem
\ref{thm:seshjet}).
We then prove our new results for Frobenius--Seshadri constants in
\S\ref{sect:frobsesh}.
In particular, we prove Theorem
\ref{thm:introseshjetample} by first comparing Seshadri constants and
Frobenius--Seshadri constants (Proposition \ref{prop:frobseshcomp}) and then
proving the analogue of Theorem \ref{thm:introseshjetample} for
Frobenius--Seshadri constants (Theorem \ref{thm:seshgenjets}).
\par In \S\ref{sect:lowerbounds}, we prove our lower
bound for Seshadri constants of adjoint-type divisors on surfaces (Corollary
\ref{cor:seshlowerboundadj}) and on higher-dimensional varieties over the complex
numbers (Theorem \ref{thm:lowerboundsoverc}).
Finally, we prove Theorems \ref{thm:smooths0ljetample} and
\ref{thm:fujitavariantcomplex} in \S\ref{sect:effective}.
\subsection*{Notation}
All rings are commutative with identity, and all ring maps are unital.
The notation \(\ZZ_{(p)}\) denotes the localization of \(\ZZ\) at the prime
ideal \((p)\) generated by a prime number \(p > 0\).
\par A \textsl{variety} is a separated scheme of finite type over a
field \(k\).
A variety or scheme is \textsl{complete} (resp.\ projective) if it is proper
(resp.\ projective) over a field \(k\).
A \textsl{curve} or a \textsl{surface} is a complete variety
of dimension \(1\) or \(2\), respectively.
\par For a scheme \(X\) of prime characteristic \(p > 0\), we denote by
\(F\colon X \to X\) the \textsl{(absolute) Frobenius morphism}, which is given by
the identity map on points and the \(p\)-th power map
\[
  \begin{tikzcd}[row sep=0,column sep=1.475em]
    \cO_X(U) \rar & \cO_{X}(U)\\
    f \rar[mapsto] & f^{p}
  \end{tikzcd}
\]
on local sections of the structure sheaf for every open subset \(U \subseteq X\).
If \(R\) is a ring of prime characteristic \(p > 0\), we denote the corresponding ring
map by \(F\colon R \to F_{*}R\).
For every integer \(e \ge 0\), we denote
the \(e\)-th iterate of the Frobenius morphisms for
schemes or rings by \(F^e\).
\par We say that a locally Noetherian scheme \(X\) or a Noetherian ring
\(R\) of prime characteristic \(p > 0\) is
\textsl{\(F\)-finite} if the Frobenius morphism \(F\) is finite.
In this case, \(X\) and \(R\) are excellent \cite[Theorem 2.5]{Kun76}
and \(R\) has a dualizing complex \(\omega_R^\bullet\)
\cite[Remark 13.6]{Gab04}.
\subsection*{Acknowledgments}
We thank Junyao Peng for pointing out an improvement to Theorem
\ref{thm:seshgenjets}.
We are grateful to Farrah Yhee for helpful conversations.

\section{\texorpdfstring{\(\bk\)}{k}-divisors,
\texorpdfstring{\(\bk\)}{k}-invertible sheaves, and base loci}\label{sect:prelim}
We collect some preliminaries on \(\bk\)-divisors, \(\bk\)-invertible sheaves,
and base loci.
See \cite[Part I]{LM} for more discussion.
An important new result is Theorem \ref{prop:kur13prop27}, which describes
how \(\Bplus(D)\) is the locus where \(D\) is ample.
\subsection{\texorpdfstring{\(\bk\)}{k}-divisors and
\texorpdfstring{\(\bk\)}{k}-invertible sheaves}
Let \(X\) be a ringed space.
We define the Picard group as in \cite[Chapitre 0, (5.6.2)]{EGAInew} and the
group \(\Div(X)\) of Cartier divisors as in \cite[D\'efinition 21.1.2]{EGAIV4}
(with the corrected definition for the sheaf \(\sK_X\) of meromorphic functions
from \cite[p.\ 204]{Kle79}).
We define the subgroup of principal Cartier divisors \(\Princ(X) \subseteq
\Div(X)\) as in \cite[(21.3.1)]{EGAIV4} (where the notation
\(\operatorname{Div.princ}(X)\) is used).
\par Now let \(X\) be a locally Noetherian scheme.
We define the group \(\WDiv(X)\) of Weil divisors as in \cite[(21.6.2)]{EGAIV4}
(where the notation \(\mathfrak{Z}^1(X)\) is used).
If \(X\) is \(G_1\) (i.e., Gorenstein in codimension 1 \cite[p.\ 142]{RF70}) and
\(S_2\), we define the group of AC divisors \(\WSh(X)\) as in \cite[Definition
on p.\ 301]{Har94} (where the notation \(\operatorname{ACart}(X)\) is
used).\medskip
\par Below, we have stated everything in terms of a coefficient ring
\(\bk\) contained in \(\RR\).
The examples to keep in mind are \(\ZZ\), \(\ZZ_{(p)}\), \(\QQ\), and \(\RR\).
\begin{definition}[see {\citeleft\citen{EGAIV4}\citemid (21.10.9)\citepunct
  \citen{Laz04a}\citemid Definition 1.3.1 and
  Remark 1.3.8\citepunct \citen{MS12}\citemid pp.\ 153--154\citepunct
  \citen{FM}\citemid Definition 1.1\citeright}]
  Let \(\bk \subseteq \RR\) be a subring.
  Let \(X\) be a ringed space.
  A \textsl{\(\bk\)-invertible sheaf} is an element of 
  \begin{align*}
    \Pic_\bk(X) &\coloneqq \Pic(X) \otimes_\ZZ \bk
    \intertext{and a \textsl{\(\bk\)-Cartier divisor} is an element of}
    \Div_\bk(X) &\coloneqq \Div(X) \otimes_\ZZ \bk.
    \intertext{If \(X\) is a locally Noetherian scheme, a \textsl{\(\bk\)-Weil
    divisor} is an element of}
    \WDiv_\bk(X) &\coloneqq \WDiv(X) \otimes_\ZZ \bk.
    \intertext{If \(X\) is a Noetherian scheme satisfying \(G_1\) and \(S_2\), a
    \textsl{\(\bk\)-AC divisor} is an element of}
    \WSh_\bk(X) &\coloneqq \WSh(X) \otimes_\ZZ \bk.
  \end{align*}
  We write these groups additively.
  A \(\bk\)-invertible sheaf, \(\bk\)-Cartier divisor, \(\bk\)-Weil divisor, or
  \(\bk\)-AC divisor 
  is \textsl{integral} if
  it lies in the image of \(\Pic(X)\), \(\Div(X)\), \(\WDiv(X)\), or
  \(\WSh(X)\), respectively.
\end{definition}
\begin{convention}
  When we say ``\(\ZZ\)-invertible sheaf,'' we write the elements of \(\Pic(X)\)
  additively, and when we say ``invertible sheaf,'' we write the elements of
  \(\Pic(X)\) multiplicatively.
  If \(D\) is a \(\ZZ\)-invertible sheaf or a \(\ZZ\)-Cartier divisor, we denote
  by \(\cO_X(D)\) the associated invertible sheaf.
\end{convention}
Setting \(\Princ_{\bk}(X) \coloneqq \Princ(X) \otimes_\ZZ \bk\), we have the
exact sequence
\[
  0 \longrightarrow \Princ_{\bk}(X) \longrightarrow \Div_\bk(X)
  \xrightarrow{l \otimes_\ZZ \bk} \Pic_\bk(X)
\]
by \cite[Proposition 21.3.3\((i)\)]{EGAIV4},
where \(l \otimes_\ZZ \bk\) is surjective if \(\Ass(\cO_X)\) is contained
in an open affine subscheme of \(X\) (e.g., \(X\) is quasi-projective over a
Noetherian ring) or if \(X\) is a reduced scheme whose set of irreducible
components is locally finite
\cite[Proposition 21.3.4 and Corollaire 21.3.5]{EGAIV4}.
If \(X\) is a locally Noetherian scheme,
this exact sequence fits into the commutative
diagram
\[
  \begin{tikzcd}
    0 \rar & \Princ_{\bk}(X) \rar\dar[equal] &
    \Div_\bk(X) \rar{l \otimes_\ZZ \bk}\dar{\textup{cyc} \otimes_\ZZ \bk}
    & \Pic_\bk(X)\dar\\
    & \Princ_{\bk}(X) \rar &
    \WDiv_\bk(X) \rar & \Cl_\bk(X) \rar & 0
  \end{tikzcd}
\]
where \(\textup{cyc}\) is the cycle map from \cite[(21.6.5.1)]{EGAIV4} and
\(\Cl_\bk(X) \coloneqq \Cl(X) \otimes_\ZZ \bk\) for the divisor class group
\(\Cl(X)\) as defined in \cite[(21.6.7)]{EGAIV4}.
Note that \(\textup{cyc} \otimes_\ZZ \bk\) is injective if \(X\) is normal
\cite[Th\'eor\`eme 21.6.9\((i)\)]{EGAIV4}.
\begin{definition}[{see \cite[Remarks 1.1.4 and 1.3.8]{Laz04a}}]
  Let \(\bk \subseteq \RR\) be a subring.
  Let \(X\) be a normal locally Noetherian scheme.
  We say that a \(\bk\)-Weil divisor on \(X\) is \textsl{\(\bk\)-Cartier}
  if it lies in the image of the injective map
  \[
    \textup{cyc}
    \otimes_\ZZ \bk\colon \Div_\bk(X) \hooklongrightarrow \WDiv_\bk(X).
  \]
\end{definition}
\par 
If \(X\) is a Noetherian scheme satisfying \(G_1\) and \(S_2\), the cycle map
factors as
\[
  \Div_\bk(X) \hooklongrightarrow \WSh_\bk(X) \longrightarrow \WDiv_\bk(X)
\]
by the construction of \(\textup{cyc}\) (see also the proof of
\cite[Proposition 2.4]{Har94}),
where the first map is injective by \cite[Corollary 2.6]{Har94}.
\begin{definition}[see {\cite[Definition 1.3.3\((iii)\)]{Laz04a}}]
  Let \(\bk \subseteq \RR\) be a subring.
  Let \(X\) be a ringed space (resp.\ a locally Noetherian scheme, a locally
  Noetherian scheme satisfying \(G_1\) and \(S_2\)).
  We say that two elements \(D,E\) in \(\Div_\bk(X)\) (resp.\ \(\WDiv_\bk(X)\),
  \(\WSh_\bk(X)\)) are
  \textsl{\(\bk\)-linearly equivalent} and write \(D \sim_\bk E\)
  if their difference lies in the image of
  \(\Princ_\bk(X)\).
\end{definition}
To define \(\bk\)-numerical equivalence, we recall that for complete schemes
\(X\), we can define intersection numbers
\[
  \bigl(\sL_1 \cdot \sL_2 \cdots \sL_m \cdot Y \bigr) \in \ZZ
\]
for a closed subscheme \(Y \subseteq X\) and invertible sheaves
\(\sL_1,\sL_2,\ldots,\sL_m\) on \(X\) where \(m \ge \dim(Y)\).
See \cite[Definition B.8]{Kle05}.
By linearity \cite[Theorem B.9(2)]{Kle05}, we can extend this definition to
\(\bk\)-invertible sheaves.
We can also extend this definition to \(\bk\)-Cartier divisors using the map
\(l \otimes_\ZZ \bk\).
In this case, we denote the intersection product by
\[
  \bigl(D_1 \cdot D_2 \cdots D_m \cdot Y \bigr) \in \bk.
\]
\begin{citeddef}[{\citeleft\citen{Kle66}\citemid p.\ 303\citepunct
  \citen{Kee03}\citemid Definition 2.9\citeright}]
  Let \(\bk \subseteq \RR\) be a subring.
  Let \(X\) be a complete scheme.
  A \(\bk\)-invertible sheaf \(D\) is \textsl{nef} if \((D \cdot C)\ge 0\) for
  every integral closed curve \(C \subseteq X\).
  A \(\bk\)-invertible sheaf \(D\) is \textsl{numerically trivial} if both \(D\)
  and \(-D\) are nef.
  We say that two \(\bk\)-invertible sheaves \(D,E\) are
  \textsl{\(\bk\)-numerically trivial} if \(D \equiv_\bk E\).
  We use the same notation and terminology for \(\bk\)-Cartier divisors using
  the map \(l \otimes_\ZZ \bk\).
  We set
  \[
    N^1_\bk(X) \coloneqq \Pic_\bk(X)/{\equiv_\bk}.
  \]
\end{citeddef}
By the theorem of the base \citeleft\citen{Kle66}\citemid Chapter IV, \S1,
Proposition 4\citepunct \citen{Kee03}\citemid Theorem 3.6\citepunct
\citen{Kee18}\citemid Theorem E2.2\citeright, the \(\bk\)-module \(N^1_\bk(X)\)
is finitely generated as a \(\bk\)-module.\medskip
\par We use the following conventions for rounding up and down.
\begin{citeddef}[{\cite[Definition 3.4.1]{BGGJKM} (cf.\
  \citeleft\citen{Bir17}\citemid Definition 1.2\citepunct
  \citen{Laz04b}\citemid Definition 9.1.2\citeright)}]
  \label{def:roundup}
  Let \(\bk \subseteq \RR\) be a subring.
  Let \(X\) be a ringed space (resp.\ a Noetherian scheme satisfying \(G_1\) and
  \(S_2\)), and let \(D \in \Div_\bk(X)\)
  (resp.\ \(\WSh_\bk(X)\)).
  A \textsl{decomposition} \(\mathcal{D}\) of \(D\) is an expression
  \[
    D = \sum_{i=1}^r a_iD_i
  \]
  for some \(a_i \in \bk\) and Cartier divisors (resp.\ AC divisors)
  \(D_i\).
  We note that such a decomposition is not unique because \(\Div(X)\) (resp.\
  \(\WSh(X)\)) may have
  torsion.
  The \textsl{round-up} and \textsl{round-down} of \(D\)
  with respect to
  \(\mathcal{D}\) are the Cartier divisors (resp.\ AC divisors)
  \[
    \lceil D \rceil_{\mathcal{D}} \coloneqq \sum_{i=1}^r \lceil a_i \rceil
    D_i \qquad \text{and} \qquad
    \lfloor D \rfloor_{\mathcal{D}} \coloneqq \sum_{i=1}^r \lfloor a_i \rfloor
    D_i,
  \]
  respectively.
  The round-up and round-down depend on the decomposition
  \(\mathcal{D}\).
  \par Now let \(X\) be a locally Noetherian scheme and let \(D \in
  \WDiv_\bk(X)\).
  Since \(\WDiv(X)\) is torsion-free \cite[(21.10.9)]{EGAIV4},
  there is a unique expression
  \[
    D = \sum_{i=1}^r a_iD_i.
  \]
  Then, the \textsl{round-up} and \textsl{round-down} of \(D\) are
  the Weil divisors
  \[
    \lceil D \rceil \coloneqq \sum_{i=1}^r \lceil a_i \rceil
    D_i \qquad \text{and} \qquad
    \lfloor D \rfloor \coloneqq \sum_{i=1}^r \lfloor a_i \rfloor
    D_i,
  \]
  respectively.
\end{citeddef}

\subsection{Base ideals, base loci, and stable base loci}
We recall the definitions of base ideals, base loci, and stable base loci.
We start with the base ideal.
\begin{citeddef}[{\cite[Definition 2.1]{Kur13}}]
  Let \(X\) be a ringed space and let \(\sL\) be an invertible sheaf on \(X\).
  Let \(\sF_{\sL}\) be the sub-\(\cO_X\)-module of \(\sL\) generated by
  \(H^0(X,\sL)\), i.e.,
  \[
    \sF_{\sL} \coloneqq \im\bigl(\eval\colon \cO_X^{(I)} \longrightarrow
    \sL\bigr) \subseteq \sL
  \]
  where \(\eval\) is the morphism corresponding to the set of
  global sections
  \(H^0(X,\sL)\) under the bijection in \cite[Chapitre 0, (5.1.1)]{EGAInew}.
  The \textsl{base ideal of \(\sL\)} is
  \[
    \fb(\sL) \coloneqq \Ann_{\cO_X}\bigl(\sL/\sF_{\sL}\bigr)
    \subseteq \cO_X.
  \]
  For a Cartier divisor \(L\) or a \(\ZZ\)-invertible sheaf \(L\) on \(X\), we set
  \[
    \fb\bigl(\lvert L \rvert \bigr) \coloneqq \fb\bigl(\cO_X(L)\bigr) \subseteq
    \cO_X.
  \]
\end{citeddef}
This definition matches the usual definition in \((\ref{lem:baselocuslrs})\)
below for invertible sheaves on schemes.
In particular, \((\ref{lem:baselocuslrs})\) says that we can compute base ideals
by evaluating \(H^0(X,\sL)\), and that this does not depend on the ring \(A\)
over which we consider \(X\) as a scheme.
\begin{lemma}\label{lem:baselocus}
  Let \(X\) be a ringed space and let \(\sL\) be an invertible sheaf on \(X\).
  \begin{enumerate}[label=\((\roman*)\),ref=\roman*]
    \item\label{lem:baselocustwist} We have
      \[
        \fb(\sL) = \im\Bigl(\eval \otimes_{\cO_X}
        \sL^{-1}\colon \cO_X^{(I)} \otimes_{\cO_X} \sL^{-1}
        \longrightarrow \cO_X \Bigr) \subseteq
        \cO_X.
      \]
    \item\label{lem:baselocusqc} Suppose that \(X\) is a scheme.
      Then, \(\fb(\sL)\) is quasi-coherent.
    \item\label{lem:baselocuslrs} Suppose that \(X\) is a quasi-compact
      quasi-separated scheme.
      Consider a ring map \(A \to H^0(X,\cO_X)\) and denote by \(\pi_A\colon
      X \to \Spec(A)\) the corresponding morphism under the natural bijection
      \[
        \Hom_{\mathsf{LRS}}\bigl(X,\Spec(A)\bigr)
        \overset{\sim}{\longrightarrow}
        \Hom_{\mathsf{Ring}}\bigl(A,H^0(X,\cO_X)\bigr)
      \]
      from \emph{\citeleft\citen{Gre61}\citemid Proposition 1\citepunct
      \citen{EGAII}\citemid Errata et addenda,
      Proposition 1.8.1\citeright}.
      Consider the twist
      \begin{equation}\label{eq:counittwist}
        \pi_A^*\pi_{A*}\sL \otimes_{\cO_X} \sL^{-1}
        \longrightarrow \cO_X
      \end{equation}
      of the counit morphism for the adjunction \(\pi_A^*
      \dashv \pi_{A*}\) by \(\sL^{-1}\).
      Then,
      \[
        \fb(\sL) = \im\Bigl(\pi_A^*\pi_{A*}\sL \otimes_{\cO_X} \sL^{-1}
        \longrightarrow \cO_X\Bigr).
      \]
  \end{enumerate}
\end{lemma}
\begin{notation}
  We will often denote the twist of the counit morphism in
  \eqref{eq:counittwist} by
  \[
    H^0(X,\sL) \otimes_A \sL^{-1} \longrightarrow \cO_X.
  \]
\end{notation}
\begin{proof}[Proof of Lemma \ref{lem:baselocus}]
  \((\ref{lem:baselocustwist})\).
  We have the equalities
  \begin{align*}
    \fb(\sL) &= \Ann_{\cO_X}\bigl(\sL/\sF_\sL\bigr)\\
    &= \Ann_{\cO_X}\bigl(\sL/\sF_\sL \otimes_{\cO_X} \sL^{-1}\bigr)\\
    &= \Ann_{\cO_X}\Bigl(\cO_X\big/\bigl(\sF_\sL \otimes_{\cO_X} \sL^{-1}\bigr)
    \Bigr)\\
    &= \sF_\sL \otimes_{\cO_X} \sL^{-1}\\
    &= \im\bigl(\eval \otimes_{\cO_X} \sL^{-1}\bigr)
  \end{align*}
  where the middle two equalities and the last equality
  hold by the assumption that \(\sL\) is
  invertible.
  \par \((\ref{lem:baselocusqc})\) follows from \((\ref{lem:baselocustwist})\)
  because \(\cO_X^{(I)} \otimes_{\cO_X \sL^{-1}}\) is quasi-coherent and,
  on a scheme \(X\), the image
  of a morphism of quasi-coherent \(\cO_X\)-modules is quasi-coherent
  \cite[Corollaire
  2.2.2\((ii)\)]{EGAInew}.
  \par \((\ref{lem:baselocuslrs})\).
  Consider the surjection
  \[
    A^{(I)} \longtwoheadrightarrow H^0(X,\sL)
  \]
  where \(A^{(I)}\) is the direct sum of copies of \(A\) indexed by
  \(H^0(X,\sL)\), and the map sends the generator \(1_s\) of the copy of \(A\)
  indexed by \(s \in H^0(X,\sL)\) to \(s\).
  Note that \(\pi_A\colon X \to \Spec(A)\) is quasi-compact and quasi-separated
  by \cite[Corollaire 6.1.10]{EGAInew}.
  Thus, \(\pi_{A*}\sL\) is quasi-coherent \cite[Proposition 6.7.1]{EGAInew}.
  Taking sheaves associated to the modules above, we obtain the surjection
  \begin{equation}\label{eq:evalonspeca}
    \cO_{\Spec(A)}^{(I)} \longtwoheadrightarrow \pi_{A*}\sL.
  \end{equation}
  We claim that the evaluation morphism \(\eval\colon \cO_X^{(I)} \to \sL\)
  factors as
  \begin{equation}\label{eq:evalfactor}
    \cO_X^{(I)} \longtwoheadrightarrow
    \pi_A^*\pi_{A*}\sL
    \longrightarrow \sL
  \end{equation}
  where the first morphism is the surjection obtained by pulling back
  \eqref{eq:evalonspeca}.
  The factorization \eqref{eq:evalfactor} holds because for every affine open
  subset \(U = \Spec(B) \subseteq X\), these morphisms restrict to
  \[
    \begin{tikzcd}[row sep=0,column sep=1.475em]
      B^{(I)} \rar[twoheadrightarrow]
      & H^0(X,\sL) \otimes_A B \rar
      & H^0\bigl(U,\sL\rvert_{U}\bigr)\\
      1_s \rar[mapsto] & s \otimes 1 \rar[mapsto] & s\rvert_U\mathrlap{.}
    \end{tikzcd}
  \]
  Since the first morphism in \eqref{eq:evalfactor} is surjective, the twist
  \(\eval \otimes_{\cO_X}\sL^{-1}\) of the composition and the twist
  \(\pi_A^*\pi_{A*}\sL \otimes_{\cO_X} \sL^{-1} \to\cO_X\) of the
  second morphism have the same image.
\end{proof}
We can now define base schemes and base loci.
\begin{definition}[see {\cite[\S1.1.B]{Laz04a}}]
  Let \(X\) be a scheme and let \(\sL\) be an invertible sheaf on \(X\).
  Since \(\mathfrak{b}(\sL)\) is quasi-coherent by Lemma
  \ref{lem:baselocus}\((\ref{lem:baselocusqc})\), we can define
  the \textsl{base scheme of \(\sL\)} to be the closed subscheme \(\Bs(\sL)
  \subseteq X\)
  corresponding to \(\mathfrak{b}(\sL)\).
  The \textsl{base locus of \(\sL\)} is the underlying closed subset
  \(\Bs(\sL)_\red \subseteq X\) of \(\Bs(\sL)\).
  If \(L\) is a \(\ZZ\)-invertible sheaf, we set
  \begin{align*}
    \Bs\bigl(\lvert L \rvert\bigr) &\coloneqq \Bs\bigl(\cO_X(L)\bigr) \subseteq
    X,\\
    \Bs\bigl(\lvert L \rvert\bigr)_\red &\coloneqq
    \Bs\bigl(\cO_X(L)\bigr)_\red \subseteq X.
  \end{align*}
\end{definition}
Next, we define stable base loci for invertible sheaves, which were introduced by
Fujita \cite{Fuj83} and defined at this level of generality by K\"uronya
\cite{Kur13}.
\begin{citeddef}[{\citeleft\citen{Fuj83}\citemid Definition 1.17\citepunct
  \citen{Kur13}\citemid Definition 2.1 and Remark 2.4\citeright}]
  Let \(X\) be a scheme and let \(\sL\) be an invertible sheaf on \(X\).
  The \textsl{stable base locus of \(\sL\)} is the closed subset
  \[
    \SB(\sL) \coloneqq \bigcap_{m=1}^\infty
    \Bs\bigl(\sL^{\otimes m}\bigr)_\red \subseteq
    X.
  \]
  If the underlying topological space of \(X\) is Noetherian,
  this intersection stabilizes, and hence there exists
  \(m_0 \ge 1\) such that
  \begin{equation}
    \SB(\sL) = \Bs\bigl(\sL^{\otimes pm_0}\bigr)_\red\label{eq:kurrem24}
  \end{equation}
  for all \(p \ge 1\).
\end{citeddef}
Because \(\Pic(X) \to \Pic(X) \otimes_\ZZ \QQ\) is not necessarily injective, we
must take care in defining stable base loci for \(\QQ\)-invertible sheaves.
\begin{definition}\label{def:sbqinvertible}
  Let \(\bk \subseteq \QQ\) be a subring.
  Let \(X\) be a scheme whose underlying topological space is Noetherian
  and let \(L\) be a \(\bk\)-invertible sheaf.
  We define the \textsl{stable base locus of \(L\)} as
  \[
    \SB(L) \coloneqq \SB(\sL) \subseteq X
  \] 
  where \(\sL\) is a invertible sheaf whose image is \(mL\) for some \(m \in
  \ZZ_{>0}\).
\end{definition}
Statement \((\ref{lem:sbwelldefitem})\) below says that this definition does not
depend on the choice of \(\sL\).
\begin{lemma}\label{lem:sbwelldef}
  Let \(\bk \subseteq \QQ\) be a subring.
  Let \(X\) be a ringed space
  and let \(L\) be a \(\bk\)-invertible sheaf.
  Let \(\sL_1,\sL_2\) be invertible sheaves whose images in \(\Pic_{\bk}(X)\)
  are \(n_1L,n_2L\) for some \(n_1,n_2 \in \ZZ_{>0}\).
  \begin{enumerate}[label=\((\roman*)\),ref=\roman*]
    \item\label{lem:baseidealinvsheafiso}
      There exists \(m_0 \ge 1\) such that \(\sL_1^{\otimes m_0n_2} \cong
      \sL_2^{\otimes m_0n_1}\).
    \item\label{lem:baseidealwelldef}
      With \(m_0\) as in \((\ref{lem:baseidealinvsheafiso})\), we have
      \(\fb(\sL_1^{\otimes pm_0n_2})
      = \fb(\sL_2^{\otimes pm_0n_1})\)
      for all \(p \ge 1\).
    \item\label{lem:sbwelldefitem}
      Suppose that \(X\) is a scheme whose underlying topological space is
      Noetherian.
      Then, we have \(\SB(\sL_1) = \SB(\sL_2)\).
      Thus, the definition of \(\SB(L)\) in Definition \ref{def:sbqinvertible}
      does not depend on the choice of \(\sL\).
  \end{enumerate}
\end{lemma}
\begin{proof}
  For \((\ref{lem:baseidealinvsheafiso})\) and
  \((\ref{lem:baseidealwelldef})\),
  since \(\sL_1^{\otimes n_2}\) and \(\sL_2^{\otimes n_1}\) have the same image
  under the map \(\Pic(X) \to \Pic(X) \otimes_\ZZ \bk\), there exists \(m_0 \ge
  1\) such that
  \begin{align*}
    \sL_1^{\otimes m_0n_2} &\cong \sL_2^{\otimes m_0n_1}.
    \intertext{This shows \((\ref{lem:baseidealinvsheafiso})\).
    Taking \(p\)-th tensor powers, we obtain
    \((\ref{lem:baseidealwelldef})\).
    To show \((\ref{lem:sbwelldefitem})\),
    let \(m_0\) be as in \((\ref{lem:baseidealwelldef})\).
    Because the underlying topological space of \(X\) is Noetherian,
    there exists \(p \ge 1\) such that}
    \SB(\sL_1) = \Bs\bigl(\sL_1^{\otimes pm_0n_2}\bigr)_\red
    &= \Bs\bigl(\sL_2^{\otimes pm_0n_1}\bigr)_\red = \SB(\sL_2)
  \end{align*}
  by \eqref{eq:kurrem24}, where the middle equality holds by
  \((\ref{lem:baseidealwelldef})\).
\end{proof}
We have the following birational transformation rule for stable base loci.
\begin{lemma}\label{lem:basebirat}
  Let \(X\) be a normal Noetherian scheme.
  Let \(\sL\) be an invertible sheaf on \(X\) and let \(L\) be a
  \(\bk\)-invertible sheaf on \(X\) for a subring \(\bk \subseteq \QQ\).
  For every proper birational morphism \(f\colon X' \to X\), we have
  \begin{align*}
    \fb(f^*\sL) &= f^{-1}\bigl(\fb(\sL)\bigr) \cdot \cO_{X'},\\
    \Bs(f^*\sL) &= f^{-1}\bigl(\Bs(\sL)\bigr),\\
    \SB(f^*L) &= f^{-1}\bigl(\SB(L)\bigr).
  \end{align*}
\end{lemma}
\begin{proof}
  We claim it suffices to prove the statement for invertible sheaves \(\sL\).
  Let \(m\) be sufficiently large such that \(mL\) is the image of \(\sL\) under
  \(\Pic(X) \to \Pic_\bk(X)\), and such that \(\SB(L) = \Bs(\sL)\)
  and \(\SB(f^*L) = \Bs(f^*\sL)\).
  Such an integer \(m\) exists exists by \eqref{eq:kurrem24} and Lemma
  \ref{lem:sbwelldef}\((\ref{lem:sbwelldefitem})\).
  It therefore suffices to show that \(\Bs(f^*\sL) = f^{-1}(\Bs(\sL))\).
  \par Set \(A \coloneqq \Gamma(X,\cO_X)\).
  We have the commutative diagram
  \[
    \begin{tikzcd}
      H^0(X,\sL) \otimes_A f^*\sL^{-1}
      \rar\arrow{d}[swap,sloped]{\sim} & \cO_{X'}
      \dar[equals]\\
      H^0(X',f^*\sL) \otimes_A f^*\sL^{-1} \rar &
      \cO_{X'}
    \end{tikzcd}
  \]
  where the top horizontal map is the pullback of the evaluation morphism for
  \(\sL\) on \(X\) and the left vertical map is an isomorphism by the
  assumption that \(X\) is normal.
  This shows that \(\fb(f^*\sL) = f^{-1}(\fb(\sL)) \cdot \cO_{X'}\) and
  \(\Bs(f^*\sL) = f^{-1}(\Bs(\sL))\).
\end{proof}
\subsection{Augmented base loci}
Next, we define augmented base loci, which were introduced by Ein,
Lazarsfeld, Musta\c{t}\u{a}, Nakamaye, and Popa \cite{ELMNP06} for normal
projective varieties and by Birkar \cite{Bir17} in general.
The augmented base locus for big and nef divisors
appears in earlier work of Nakamaye
\cite[Theorem 0.3]{Nak00}.
\begin{citeddef}[{\citeleft\citen{ELMNP06}\citemid Definition 1.2\citepunct
  \citen{Bir17}\citemid Definition 1.2 and Lemma 3.1\citeright}]
  Let \(X\) be a projective scheme over a field \(k\)
  and let \(L\) be an \(\RR\)-Cartier divisor
  on \(X\).
  Write
  \[
    L \sim_\RR \sum_{i=1}^r t_iA_i
  \]
  where each \(A_i\) is a very ample Cartier divisor and \(t_i \in \RR\), which
  is possible by \citeleft\citen{Kle66}\citemid Chapter IV, \S1, Theorem
  1\citepunct \citen{Kee03}\citemid Theorem 3.9\citeright.
  Set
  \[
    \langle mL \rangle \coloneqq \sum_{i=1}^r \lfloor mt_i \rfloor A_i
  \]
  for every integer \(m \ge 1\), which depends on the \(\RR\)-linear equivalence
  above and the decomposition \(\sum_{i=1}^r t_iA_i\).
  The \textsl{augmented base locus of \(L\)} is the closed subset
  \[
    \Bplus(L) \coloneqq \bigcap_{m \ge 1} \SB\bigl(\langle mL \rangle - A \bigr)
    \subseteq X
  \]
  for a cnoice of ample Cartier divisor \(A\).
  By \cite[Lemma 3.1]{Bir17}, the set \(\Bplus(L)\) does not depend on the
  choice of \(A\) or the choice of the expression \(L \sim_\RR \sum_{i=1}^r
  t_iA_i\), and we have
  \begin{equation}\label{eq:bpluselmnpdef}
    \Bplus(L) = \bigcap_H \SB(L-H)
  \end{equation}
  where \(H\) runs over all ample \(\RR\)-Cartier divisors such that \(L-H\) is
  \(\QQ\)-Cartier.
  The description in \eqref{eq:bpluselmnpdef} and \cite[Proof of Proposition
  1.4]{ELMNP06} (whose proof works for arbitrary projective schemes)
  imply that \(\Bplus(L)\) only
  depends on the numerical equivalence class of \(L\).
\end{citeddef}
\begin{remark}
  For any subring \(\bk \subseteq \RR\), we can combine \cite[Lemma
  3.1]{Bir17}, \eqref{eq:bpluselmnpdef}, and \cite[Proof of Proposition
  1.4]{ELMNP06} to deduce that the augmented base locus
  only depends on the image of a \(\bk\)-Cartier divisor
  \(L\) in \(\Div_{\RR}(X)\).
  Thus, we can define the augmented base locus of a \(\bk\)-Cartier divisor
  \(L\) as the augmented base locus of the image of \(L\) in \(\Div_\RR(X)\).
  When proving statements about augmented base loci, it therefore suffices to
  consider the case when \(\bk = \RR\).
\end{remark}
Augmented base loci satisfy the following birational transformation rule.
\begin{proposition}[cf.\ {\cite[Proposition 2.3]{BBP13}}]\label{prop:bbp23}
  Let \(f\colon X' \to X\) be a birational morphism between normal projective
  varieties.
  Let \(D\) be an \(\RR\)-Cartier divisor on \(X\).
  Then,
  \[
    \Bplus(f^*D+F) = f^{-1}\bigl(\Bplus(D)\bigr) \cup \Exc(f)
  \]
  for every \(f\)-exceptional \(\RR\)-Cartier divisor \(F\) on \(X'\).
\end{proposition}
\begin{proof}
  The proof of \cite[Proposition 2.3]{BBP13} applies because the negativity
  lemma holds over arbitrary fields \cite[Lemma 2.15]{DW22}.
\end{proof}
We also need the following result, which describes how \(\Bplus(D)\) is the
locus where \(D\) is ample.
Regularity in the proof below is in the sense of Castelnuovo and Mumford
\cite[p.\ 99]{Mum66} (see \cite[Definition 1.8.4]{Laz04a}).
\begin{theorem}[cf.\ {\citeleft\citen{Kur13}\citemid Propisition
  2.7\citepunct \citen{FM21}\citemid Lemma 6.11\citeright}]
  \label{prop:kur13prop27}
  Let \(\bk \subseteq \QQ\) be a subring.
  Let \(X\) be a projective scheme over a field \(k\)
  and let \(D\) be a \(\bk\)-Cartier divisor
  with a decomposition \(\mathcal{D}\).
  Then, \(\Bplus(D)\) is the smallest closed subset of \(X\) such that the
  following property holds:
  \begin{quote}
    For every
    coherent sheaf \(\sF\) on \(X\) and for every \(\RR\)-Cartier divisor \(E\)
    with decomposition \(\mathcal{E}\), there exists an integer \(n_0\) such
    that the sheaves
    \begin{align*}
      \sF &\otimes_{\cO_X} \cO_X\bigl(\lfloor E+nD
      \rfloor_{\mathcal{E}+n\mathcal{D}}
      + P\bigr)\\
      \sF &\otimes_{\cO_X} \cO_X\bigl(\lceil E+nD
      \rceil_{\mathcal{E}+n\mathcal{D}} + P\bigr)
    \end{align*}
    are globally
    generated on \(X - \Bplus(D)\) for every integer \(n \ge n_0\)
    and every nef \(\ZZ\)-invertible sheaf
    \(P\), where the rounding is done with respect
    to the decomposition \(\mathcal{E}+n\mathcal{D}\) obtained by adding the
    decompositions for \(E\) and \(D\).
  \end{quote}
  If \(X\) satisfies \(G_1\) and \(S_2\) (resp.\ is normal),
  then the same conclusion holds for \(\bk\)-Cartier
  divisors \(D\) (resp.\ \(\bk\)-Cartier
  \(\bk\)-Weil divisors \(D\))
  and \(\RR\)-AC divisors \(E\) (resp.\ \(\RR\)-Weil divisors
  \(E\)).
  In the normal case,
  rounding is done in the sense of \(\RR\)-Weil divisors.
\end{theorem}
\begin{proof}
  We first show that \(\Bplus(D)\) satisfies the condition in the proposition.
  If \(\Bplus(D) = X\), then the condition trivially holds.
  We therefore assume that \(\Bplus(D) \ne X\).
  \par Let \(A\) be an ample and free Cartier divisor on \(X\).
  Let
  \begin{align*}
    D &= \sum_{i=1}^r a_iD_i
    \intertext{be the decomposition \(\mathcal{D}\) where \(a_i \in \bk\).
    For each integer \(n\), we denote by \(n\mathcal{D}\) the decomposition}
    nD &= \sum_{i=1}^r na_iD_i.
  \end{align*}
  By \cite[Proposition 1.5]{ELMNP06} (whose proof works for arbitrary projective
  schemes) and \eqref{eq:kurrem24},
  there exist
  positive integers \(q\) and \(r\) such that \(qra_i \in \ZZ\) for all \(i\)
  and
  \begin{equation}\label{eq:bplusstabilize}
    \Bplus(D) = \SB(rD-A) = \Bs\bigl(\bigl\lvert q(rD-A)\bigr\rvert\bigr)_\red
  \end{equation}
  where \(qrD\) has the decomposition \(qr\mathcal{D}\).
  After possibly replacing \(A\) and \(r\) by \(qr\) and \(qA\), respectively,
  we can
  assume that \(r\) is an integer such that \(ra_i \in \ZZ\) for all \(i\)
  and
  \(\Bplus(D) = \Bs(\lvert q(rD-A) \rvert)_\red\) for all \(q \ge 1\).
  \par We claim that there exists an integer \(m_0\) such that
  \begin{equation}\label{eq:sheavestobegg}
    \begin{aligned}
      \sF \otimes_{\cO_X} \cO_X\bigl(mA+\lfloor E+jD \rfloor+P\bigr)\\
      \sF \otimes_{\cO_X} \cO_X\bigl(mA+\lceil E+jD \rceil+P\bigr)
    \end{aligned}
  \end{equation}
  are globally generated for every
  \(m \ge m_0\), every \(1 \le j < r\), and every nef \(\ZZ\)-invertible sheaf
  \(P\), where
  \(\lfloor E+jD \rfloor\) and \(\lceil E+jD \rceil\)
  should either be interpreted in the sense of \(\bk\)-Cartier
  divisors or \(\bk\)-AC divisors
  with respect to the decomposition \(\mathcal{E}+j\mathcal{D}\),
  or interpreted in
  the sense of \(\RR\)-Weil divisors in the situation when \(X\) is
  normal.
  By Fujita's vanishing theorem \citeleft\citen{Fuj83}\citemid Theorem
  5.1\citeright\ applied to \(X_{\bar{k}}\),
  there exists an integer \(m_1\) such that for all
  integers \(m \ge m_1\) and all \(i > 0\), we have
  \begin{align*}
    H^i\Bigl(X,\sF \otimes_{\cO_X} \cO_X\bigl(mA+\lfloor E+jD \rfloor+P\bigr)\Bigr) &= 0\\
    H^i\Bigl(X,\sF \otimes_{\cO_X} \cO_X\bigl(mA+\lceil E+jD \rceil+P\bigr)\Bigr) &= 0
  \end{align*}
  for all \(0 \le j < r\) and every nef \(\ZZ\)-invertible sheaf \(P\).
  Thus, if \(m \ge m_1 + \dim X\), then the coherent sheaves in
  \eqref{eq:sheavestobegg}
  are
  \(0\)-regular with respect to \(A\), and are therefore
  globally generated by \cite[Chapter II, \S1, Proposition 1\((iii)\)]{Kle66}
  applied to \(X_{\bar{k}}\).
  It therefore suffices to set \(m_0 = m_1 + \dim(X)\).
  \par To prove that \(\Bplus(D)\) satisfies the condition in the proposition,
  we note that by the above, the sheaves
  \begin{align*}
    \sF \otimes_{\cO_X} \cO_X\bigl(mA+\lfloor E+jD \rfloor+P\bigr) &\otimes_{\cO_X}
    \cO_X\bigl(q(rD-A)\bigr)\\
    \sF \otimes_{\cO_X} \cO_X\bigl(mA+\lceil E+jD \rceil+P\bigr) &\otimes_{\cO_X}
    \cO_X\bigl(q(rD-A)\bigr)
  \end{align*}
  are globally generated away from
  \(\Bplus(D)\)
  for all \(m \ge m_0\), all \(q \ge 1\), all \(0 \le j < r\), and every
  nef \(\ZZ\)-invertible sheaf \(P\).
  Here, we use the decomposition \(qr\mathcal{D}\) for \(qrD\) as before.
  Setting \(q=m\), we see that the sheaves 
  \begin{align*}
    \sF \otimes_{\cO_X} \cO_X\bigl(mrD+\lfloor E+jD \rfloor+P\bigr) &\cong
    \sF \otimes_{\cO_X} \cO_X\bigl(\lfloor E+(mr+j)D \rfloor+P\bigr)\\
    \sF \otimes_{\cO_X} \cO_X\bigl(mrD+\lceil E+jD \rceil+P\bigr) &\cong
    \sF \otimes_{\cO_X} \cO_X\bigl(\lceil E+(mr+j)D \rceil+P\bigr)
  \end{align*}
  are globally generated away from \(\Bplus(D)\)
  for all \(m \ge m_0\), all \(0 \le j < r\), and every nef
  \(\ZZ\)-invertible sheaf \(P\).
  It therefore suffices to set \(n_0 = m_0r\).
  \par Finally, we show \(\Bplus(D)\) is the smallest closed subset satisfying
  the condition in the proposition.
  Let \(x \in \Bplus(D)\); it suffices to show that for \(\sF = \cO_X(-A)\) where
  \(A\) is ample, the sheaf
  \[
    \sF \otimes_{\cO_X} \cO_X(nD) = \cO_X(nD-A) 
  \]
  is not globally generated at \(x\)
  for all \(n \ge 0\) such that \(nD\) is a Cartier divisor.
  This follows from \cite[Proposition 1.5]{ELMNP06} (whose proof works for
  arbitrary projective schemes) since \(x \in \Bplus(D)\).
\end{proof}

\subsection{The Nakai--Moishezon criterion for
\texorpdfstring{\(\RR\)}{R}-invertible sheaves}
We will need the following result, which is a version of the
numerical criterion for ampleness due to Nakai \citeleft\citen{Nak60}\citemid
Theorem 9\citepunct \citen{Nak63}\citemid Theorem 2\citeright,
Moishezon \citeleft\citen{Moi61}\citemid Theorem 1\citepunct
\citen{Moi62}\citemid Theorem 1\citepunct
\citen{Moi64}\citemid Chapter II, Theorem 2\citeright, and Kleiman
\citeleft\citen{Kle65}\citemid Theorem 2\citepunct \citen{Kle66}\citemid Chapter
III, \S1, Theorem 1\citeright\
for \(\RR\)-invertible sheaves over arbitrary fields.
The projective case over algebraically closed fields is due to Campana and
Peternell \cite{CP90}.
The complete case over algebraically closed fields is due to Fujino and Miyamoto
\cite{FM}.
\begin{theorem}[{cf.\ \citeleft\citen{CP90}\citemid Theorem 1.3\citepunct
  \citen{FM}\citemid Theorem 1.3\citeright}]\label{thm:fujinomiyamoto}
  Let \(X\) be a complete scheme over a field \(k\) and let \(L\) be an
  \(\RR\)-invertible sheaf on \(X\).
  Then, \(L\) is ample if and only if \((L^{\dim(X)} \cdot Z) > 0\) for every
  positive-dimensional closed integral subscheme \(Z \subseteq X\).
\end{theorem}
\begin{proof}
  The proof in \cite{FM} only uses the assumption that \(k\) is algebraically
  closed in two places.
  \begin{enumerate}
    \item In \cite[Proof of Lemma 2.3, Step 2]{FM}, the authors use Siu's
      numerical criterion for a difference of nef divisors to be big
      \cite[Corollary 1.2]{Siu93}.
      The same criterion holds over arbitrary fields by \cite[Proposition
      5.4]{Cut15}.
    \item In \cite[Proof of Theorem 1.4, Line \(-3\)]{FM}, the authors use
      \cite[Lemma 2.1.11]{Fuj17book} to say that an \(\RR\)-invertible sheaf
      \(L\) on a normal complete variety \(X\) is semiample if and only if there
      exists a morphism \(f\colon X \to Y\) onto a normal projective variety
      such that the following properties hold:
      \begin{itemize}
        \item \(f^\#\colon \cO_Y \to f_*\cO_X\) is an isomorphism.
        \item There exists an ample \(\RR\)-invertible sheaf \(A\) on \(Y\) such
          that \(L \sim_\RR f^*A\).
      \end{itemize}
      The proof of \cite[Lemma 2.1.11]{Fuj17book} works over arbitrary fields
      without changes.
  \end{enumerate}
  With these modifications, the proof in \cite{FM} works over arbitrary fields.
\end{proof}

\section{Seshadri constants and moving Seshadri constants over arbitrary fields}
\label{sect:seshadri}
In this section, we extend the theory of moving Seshadri constants to varieties
over arbitrary
fields.
Seshadri constants measure the local positivity of nef Cartier divisors or
invertible sheaves.
Moving Seshadri constants are an extension of this notion to Cartier divisors
and invertible sheaves
that are not necessarily nef.\medskip
\par Seshadri constants were introduced by Demailly \cite[\S6]{Dem92} based on
Seshadri's ampleness criterion \citeleft\citen{Har70}\citemid Chapter I,
Theorem 7.1\citepunct \citen{Ses72}\citemid Remark 7.1\citeright, and moving
Seshadri constants were introduced by Nakamaye \cite{Nak03} and Ein, Lazarsfeld,
Musta\c{t}\u{a}, Nakamaye, and Popa \cite[\S6]{ELMNP09}, all for smooth complex
projective varieties.
Foundational results on Seshadri constants, especially those connecting Seshadri
constants with generation of jets, were proved over algebraically closed fields
of arbitrary characteristic in \cite[Chapter 5]{Laz04a}, \cite{MS14},
\cite{Mur18}, and \cite{FM21}.
Many of the results in \cite[Chapter 5]{Laz04a} and \cite{MS14} were extended to
arbitrary fields in \cite[Appendix]{DW22}.\medskip
\par The new material in this section extend many of the results for moving
Seshadri constants in \cite[\S6]{ELMNP09} in two directions:
\begin{enumerate*}
  \item to possibly singular varieties, which is new even over the complex
    numbers; and
  \item to varieties over arbitrary fields.
\end{enumerate*}
We also extend to arbitrary fields some results on Seshadri constants from
\cite[Chapter 5]{Laz04a} and \cite{Mur18} that were not proved in
\cite[Appendix]{DW22}.\medskip
\par Preliminary versions of some of this material for \(k\)-rational points on
normal projective varieties appears in the author's thesis
\cite[Chapter 7]{Mur19}.
This section is a corrected and generalized version of \cite[Chapter 7]{Mur19}.
\subsection{Seshadri constants}
Demailly introduced Seshadri constants in \cite[\S6]{Dem92} on smooth complex
projective varieties to measure the local positivity of nef divisors
with the hope that they could be used to prove cases of Fujita's
conjecture.
\begin{citeddef}[{\cite[Definition 5.1.1]{Laz04a}}]\label{def:seshadriconstant}
  Let \(X\) be a complete scheme and let
  \(D\) be a nef \(\RR\)-invertible sheaf on \(X\).
  Let \(x \in X\) be a closed point and let \(\mu\colon \tilde{X} \to X\) be the
  blowup of \(X\) at \(x\) with exceptional divisor \(E\).
  The \textsl{Seshadri constant of \(D\) at \(x\)} is
  \[
    \varepsilon(D;x) \coloneqq \sup\Set[\big]{t \in \RR_{\ge 0} \given \mu^*D -
    tE\ \text{is nef}}.
  \]
\end{citeddef}
\begin{remark}
  For any subring \(\bk \subseteq \RR\),
  Lemma \ref{lem:sbwelldef}\((\ref{lem:baseidealinvsheafiso})\) and the
  linearity of intersection numbers \cite[Theorem B.9(2)]{Kle05} imply that
  the Seshadri constant depends only on the image of a \(\bk\)-invertible sheaf
  \(D\) in \(\Pic_\RR(X)\).
  Thus, we can define the Seshadri constant of a \(\bk\)-invertible sheaf \(D\)
  as the Seshadri constant of the image of \(D\) in \(\Pic_\RR(X)\).
  When proving statements about Seshadri constants, it therefore suffices to
  consider the case when \(\bk = \RR\).
\end{remark}
We prove that Seshadri constants can be characterized in terms of intersection
numbers with subvarieties and multiplicities of those subvarieties.
The description in terms of intersecting with curves is Demailly's original
definition in \cite{Dem92} and was extended to arbitrary fields in \cite{DW22}.
The description in terms of intersecting with all positive-dimensional
subvarieties is due to Lazarsfeld \cite{Laz04a} for projective varieties over
algebraically closed fields.
The extensions of this description in terms of all positive-dimensional
subvarieties to complete schemes and to arbitrary fields are new to this
paper.
\begin{proposition}[cf.\ {\citeleft\citen{Laz04a}\citemid Propositions 5.1.5 and
  5.1.9\citepunct \citen{DW22}\citemid Lemma A.8\citeright}]\label{prop:laz519}
  Let \(X\) be a complete scheme over a field \(k\) and let
  \(D\) be a nef \(\RR\)-invertible sheaf on \(X\).
  For every closed point \(x \in X\), we have
  \begin{align*}
    \varepsilon(D;x) &= \inf_{C \ni x} \biggl\{ \frac{(D \cdot
    C)}{[k(x):k] \cdot e(\cO_{C,x})} \biggr\}\\
    &= \min_{V \ni x} \Biggl\{ \biggl( \frac{(D^{\dim(V)} \cdot
    V)}{[k(x):k] \cdot e(\cO_{V,x})} \biggr)^{1/{\dim(V)}} \Biggr\}
  \end{align*}
  where the infimum (resp.\ minimum) is taken over all integral closed curves \(C
  \subseteq X\) (resp.\ positive-dimensional
  closed subvarieties \(V \subseteq X\))
  containing \(x\), intersection numbers are computed over \(k\), and
  \(e(\,\cdot\,)\) denotes the Hilbert--Samuel multiplicity of a Noetherian
  local ring.
\end{proposition}
\begin{proof}
  The first equality is shown in \cite[Lemma A.8]{DW22} (which does not use the
  assumption that \(X\) is projective or that \(X\) is a variety).
  It therefore suffices to show that
  \begin{equation}\label{eq:seshwrtsubvararbdim}
    \varepsilon(D;x) \le \biggl( \frac{(D^{\dim(V)} \cdot
    V)}{[k(x):k] \cdot e(\cO_{V,x})} \biggr)^{1/{\dim(V)}}
  \end{equation}
  and that equality holds for some \(V \subseteq X\).
  Set \(\varepsilon \coloneqq \varepsilon(D;x)\).
  \par We adapt the proof of \cite[Proposition 5.1.9]{Laz04a}.
  Let \(\mu\colon \tilde{X} \to X\) be the blowup of \(X\) at \(x\) with
  exceptional divisor \(E\).
  Let \(V' \subseteq \tilde{X}\) be the strict transform of \(V\), which is the
  blowup of \(V\) at \(x\) with exceptional divisor \(E\rvert_{V'}\).
  Since \(\mu^*D - \varepsilon E\) is nef, we have
  \[
    \bigl((\mu^*D - \varepsilon E)^{\dim(V')} \cdot V'\bigr) \ge 0
  \]
  by \cite[Lemma 2.12]{Kee03}.
  We know that for every \(0 < s < \dim(V)\), we have
  \begin{equation}\label{eq:crossterms}
    \bigl((\mu^*D)^{\dim(V')-s} \cdot E^{s} \cdot V'\bigr)
    = \Bigl(\bigl(\mu^*D\rvert_{V'}\bigr)^{\dim(V')-s} \cdot
    \bigl(E\rvert_{V'}\bigr)^{s}\Bigr) = 0
  \end{equation}
  by the fact that \((E\rvert_{V'})^s\) is \(\mu\)-exceptional.
  Thus, we see that
  \begin{equation}\label{eq:multcomp}
    \begin{aligned}
      \MoveEqLeft[3]
      \bigl((\mu^*D - \varepsilon E)^{\dim(V')} \cdot V'\bigr)\\
      &= \bigl((\mu^*D)^{\dim(V')} \cdot V'\bigr) + \bigl((-\varepsilon
      E)^{\dim(V)} \cdot V'\bigr)\\
      &= \bigl(D^{\dim(V)} \cdot V\bigr) - \varepsilon^{\dim(V')} [k(x) : k] \cdot
      e(\cO_{V,x})\\
      &\ge 0
    \end{aligned}
  \end{equation}
  where the second equality holds by \cite[Example 4.3.1]{Ful98} (see also
  \cite[Definition A.1 and Remark A.2]{DW22}).
  Rearranging this equality, we obtain \eqref{eq:seshwrtsubvararbdim}.
  \par It remains to show that equality holds in \eqref{eq:seshwrtsubvararbdim}
  for some \(V \subseteq X\).
  By the Nakai--Moishezon theorem for \(\RR\)-invertible sheaves (Theorem
  \ref{thm:fujinomiyamoto}), the fact that
  \(\mu^*D - \varepsilon E\) is nef but not ample implies there exists a
  subvariety \(V' \subseteq \tilde{X}\) such that
  \[
    \bigl((\mu^*D - \varepsilon E)^{\dim(V')} \cdot V'\bigr) = 0.
  \]
  Note that \(V' \not\subseteq E\), for otherwise \eqref{eq:crossterms} would
  imply
  \[
    \bigl((\mu^*D - \varepsilon E)^{\dim(V')} \cdot V'\bigr)
    = \varepsilon^{\dim(V')} \bigl((-E)^{\dim(V')} \cdot V'\bigr)
    > 0
  \]
  since \(-E\) is \(\mu\)-ample.
  We can therefore set \(V = \mu(V')\).
\end{proof}
\begin{corollary}\label{cor:seshbirat}
  Let \(X\) be a complete scheme
  and let
  \(D\) be a nef \(\RR\)-invertible sheaf on \(X\).
  Let \(x \in X\) be a closed point and let \(\pi\colon X' \to X\) be a proper
  birational morphism that is an isomorphism at \(x\).
  Then, \(\varepsilon(D;x) = \varepsilon(\pi^*D;\pi^{-1}(x))\).
\end{corollary}
\begin{proof}
  This follows from Proposition \ref{prop:laz519} and the projection formula
  \cite[Proposition B.16]{Kle05}.
\end{proof}

\subsection{Moving Seshadri constants}
We define moving Seshadri constants following \cite{ELMNP09}.
We have adapted their definition to work for complete schemes over arbitrary
fields.
\begin{definition}[cf.\ {\cite[Definition 6.1]{ELMNP09}}]\label{def:movingsesh}
  Let \(X\) be a complete scheme and let
  \(D\) be an \(\RR\)-invertible sheaf on \(X\).
  Let \(x \in X\) be a closed point.
  The \textsl{moving Seshadri constant of \(D\) at \(x\)} is
  \[
    \varepsilon\bigl(\lVert D \rVert;x\bigr) \coloneqq
    \sup_{f^*D \equiv_\RR A+E} \varepsilon\bigl(A;f^{-1}(x)\bigr)
  \]
  where the supremum runs over all birational morphisms \(f\colon X' \to X\)
  from projective schemes \(X'\) that are isomorphisms at
  \(x\) and \(\RR\)-numerical equivalences \(f^*D \equiv_\RR
  A+E\) where \(A\) is an ample \(\QQ\)-Cartier divisor and \(E\) is an
  effective \(\RR\)-Cartier divisor such that \(x \notin f(\Supp(E))\).
  If no such birational morphisms or \(\RR\)-numerical equivalences \(f^*D
  \equiv_\RR A+E\) exist, we set \(\varepsilon(\lVert D \rVert;x) \coloneqq
  0\).
\end{definition}
\begin{remark}\label{rem:bplusandmovsesh}
  \par Since the Seshadri constants \(\varepsilon(A;f^{-1}(x))\) are positive by
  Seshadri's criterion for ampleness \citeleft\citen{Har70}\citemid Chapter I,
  Theorem 7.1\citepunct \citen{Ses72}\citemid Remark 7.1\citeright, we see that
  \(\varepsilon(\lVert D \rVert;x) > 0\) if and only if \(\RR\)-numerical
  equivalences of the form \(f^*D \equiv_\RR A+E\) above exist.
  By Proposition \ref{prop:bbp23}, we can therefore think of the condition
  ``\(\varepsilon(\lVert D \rVert;x) > 0\)'' as a version of the condition ``\(x
  \notin \Bplus(D)\)'' that makes sense on complete schemes that are not
  necessarily projective or normal.
\end{remark}
\begin{remark}
  For smooth complex projective varieties \(X\), this definition matches the
  definition in
  \cite{ELMNP09}.
  If \(x \in \Bplus(D)\), then both definitions have value \(0\) by
  Proposition \ref{prop:bbp23}.
  For \(x \notin \Bplus(D)\), we want to show that
  \begin{equation}\label{eq:ourdefiselmnpdef}
    \varepsilon\bigl(\lVert D \rVert;x\bigr) =
    \sup_{\substack{f^*D \equiv_\RR A+E\\X'\ \text{smooth}}}
    \varepsilon\bigl(A;f^{-1}(x)\bigr).
  \end{equation}
  The inequality \(\ge\) holds since the right-hand side ranges over fewer
  birational morphisms \(f\).
  \par To show the inequality \(\le\) holds in \eqref{eq:ourdefiselmnpdef},
  let \(f\colon X' \to X\) be a
  birational morphism and let \(f^*D
  \equiv_\RR A+E\) be an \(\RR\)-numerical equivalence of the form in Definition
  \ref{def:movingsesh}.
  Let \(g\colon X'' \to X'\) be a projective resolution with exceptional
  divisor \(F\) that is an isomorphism at \(x\).
  Note that \(-F\) is \(g\)-ample, and hence
  \(g^*A-\frac{1}{n} F\) is ample for all \(n \gg 0\).
  We then have the \(\RR\)-numerical equivalence
  \[
    g^*f^*D \equiv_\RR g^*A + g^*E = \biggl(g^*A-\frac{1}{n} F\biggr) 
    + \biggl(g^*E+\frac{1}{n} F\biggr).
  \]
  Moreover, we have
  \[
    \varepsilon\bigl(A;f^{-1}(x)\bigr) = \varepsilon\bigl(g^*A;(f \circ
    g)^{-1}(x)\bigr) \le \lim_{n \to \infty}
    \varepsilon\biggl(g^*A-\frac{1}{n}F;(f \circ g)^{-1}(x)\biggr).
  \]
  Here, the equality holds by Corollary \ref{cor:seshbirat}, and the
  inequality holds by the lower semi-continuity of Seshadri constants with respect to ample
  perturbations of a nef class \cite[Corollary 3.34]{FM21}.
  Taking suprema, we see that ``\(\le\)'' holds in \eqref{eq:ourdefiselmnpdef}.
\end{remark}
\par We extend properties of moving Seshadri constants to complete
varieties over arbitrary fields.
For smooth complex projective varieties, these results were proved
in \cite{ELMNP09}.
\begin{proposition}[cf.\ {\cite[Proposition 6.3 and Remark 6.5]{ELMNP09}}]
  \label{prop:elmnp0963}
  Let \(X\) be a complete scheme over a field \(k\) and
  let \(D\) be an \(\RR\)-invertible sheaf on \(X\).
  Consider a closed point \(x \in X\).
  \begin{enumerate}[label=\((\roman*)\),ref=\roman*]
    \item\label{prop:elmnp0963i} Suppose that \(X\) is a complete variety.
      We then have
      \[
        \varepsilon\bigl(\lVert D \rVert;x\bigr) \le
        \biggl(\frac{\vol_X(D)}{[k(x):k] \cdot e(\cO_{X,x})}\biggr)^{1/{\dim(X)}}
      \]
      where the volume \(\vol_X(D)\) for \(\RR\)-invertible sheaves
      is defined using \emph{\cite[Theorem
      2.5]{Cut15}}.
    \item\label{prop:elmnp0963ii}
      If \(E\) is an \(\RR\)-invertible sheaf such that \(D \equiv_\RR E\),
      then \(\varepsilon(\lVert D \rVert;x) = \varepsilon(\lVert E
      \rVert;x)\).
    \item\label{prop:elmnp0963iii}
      \(\varepsilon(\lVert \lambda D \rVert;x) = \lambda \cdot
      \varepsilon(\lVert D \rVert;x)\) for every positive real number
      \(\lambda\).
    \item\label{prop:elmnp0963iv}
      If \(D\) is a nef \(\RR\)-invertible sheaf, then \(\varepsilon(\lVert D
      \rVert;x) \le \varepsilon(D;x)\).
      Equality holds if
      \(\varepsilon(\lVert D \rVert;x) = 0\) or if \(D\) is ample.
    \item\label{prop:elmnp0963v}
      Let \(D_1,D_2\) be \(\RR\)-invertible sheaves such that
      \(\varepsilon(\lVert D_1 \rVert;x) > 0\) and
      \(\varepsilon(\lVert D_2 \rVert;x) > 0\).
      Then,
      \[
        \varepsilon\bigl(\lVert D_1+D_2 \rVert;x\bigr) \ge 
        \varepsilon\bigl(\lVert D_1 \rVert;x\bigr)
        + \varepsilon\bigl(\lVert D_2 \rVert;x\bigr).
      \]
  \end{enumerate}
\end{proposition}
\begin{proof}
  \((\ref{prop:elmnp0963i})\).
  If there are no \(\RR\)-numerical equivalences of the form \(f^*D \equiv_\RR
  A+E\) as in Definition \ref{def:movingsesh}, we have \(\varepsilon(\lVert D
  \rVert;x) = 0\), in which case there is nothing to show.
  Otherwise, consider a birational morphism \(f\colon X' \to X\) from a
  projective scheme \(X'\) and an
  \(\RR\)-numerical equivalence
  \(f^*D \equiv_\RR A+E\) as in Definition \ref{def:movingsesh}.
  After replacing \(X'\) by its reduction (which does not affect the
  birationality of \(f\) by \cite[Proposition 4.5.15\((vii)\)]{EGAInew}),
  we may assume that \(X'\) is a
  projective variety.
  We have
  \[
    \varepsilon\bigl(A;f^{-1}(x)\bigr) \le
    \biggl(\frac{\vol_{X'}(f^*D)}{[k(x):k] \cdot
    e(\cO_{X,x})}\biggr)^{1/{\dim(X)}}
  \]
  by Proposition \ref{prop:laz519}.
  Since \(E\) is effective and \(\vol_{X'}(\,\cdot\,)\) is numerically invariant
  \citeleft\citen{Laz04a}\citemid Proposition 2.2.41\citepunct
  \citen{Cut15}\citemid p.\ 9\citeright,
  we have
  \[
    \vol_{X'}(A) \le \vol_{X'}(A+E) = \vol_{X'}(f^*D) = \vol_X(D),
  \]
  where the last equality holds by the invariance of volume under birational
  morphisms \cite[Lemma 2.9]{Cut24}.
  Note that while \cite[Lemma 2.9]{Cut24} is stated for birational morphisms of
  \emph{projective} varieties, the reduction in \cite[Proof of Lemma 2.9, Last
  Paragraph]{Cut24} to the case of \(\ZZ\)-invertible sheaves is valid for
  complete varieties by \cite[Theorem 2.5]{Cut15}, and the case of
  \(\ZZ\)-invertible sheaves is proved in 
  \citeleft\citen{Hol}\citemid Lemma 4.3\citeright\ (see also
  \citeleft\citen{LM}\citemid Proposition 5.6\citeright).\medskip
  \par \((\ref{prop:elmnp0963ii})\) and \((\ref{prop:elmnp0963iii})\) hold by
  the fact that we use \(\RR\)-numerical equivalences \(f^*D \equiv_\RR A+E\)
  to define \(\varepsilon(\lVert D \rVert;x)\) in
  Definition \ref{def:movingsesh}.\medskip
  \par \((\ref{prop:elmnp0963iv})\).
  Consider a birational morphism \(f\colon X' \to X\) from a
  projective scheme \(X\) that is an isomorphism around \(x\)
  and an \(\RR\)-numerical equivalence \(f^*D \equiv_\RR A+E\) as in Definition \ref{def:movingsesh}.
  We have
  \[
    \varepsilon\bigl(A;f^{-1}(x)\bigr) \le \varepsilon\bigl(A+E;f^{-1}(x)\bigr)
    = \varepsilon\bigl(f^*D;f^{-1}(x)\bigr) = \varepsilon(D;x)
  \]
  where the first inequality holds by Proposition \ref{prop:laz519} and
  the fact that \(f^{-1}(x) \notin
  \Supp(E)\), and the third equality holds by Corollary \ref{cor:seshbirat}.
  Taking suprema, this shows that
  \( \varepsilon(\lVert D \rVert;x) \le \varepsilon(D;x) \).
  \par For the reverse inequality, first suppose that
  \(\varepsilon(\lVert D \rVert;x)
  = 0\).
  Let \(f\colon X' \to X\) be a birational morphism from a projective scheme
  that is an isomorphism at \(x\), which exists by
  \citeleft\citen{Con07}\citemid Corollary 2.6\citepunct \citen{Del10}\citemid
  Corollaire 1.4\citeright.
  Since no \(\RR\)-numerical equivalences of the form \(f^*D \equiv_\RR A+E\)
  exist, we see that \(f^{-1}(x) \in \Bplus(f^*D)\) by \eqref{eq:bpluselmnpdef}.
  We then have
  \[
    \varepsilon(D;x) = \varepsilon\bigl(f^*D;f^{-1}(x)\bigr) = 
    \min_{V \ni f^{-1}(x)} \Biggl\{ \biggl( \frac{(f^*D^{\dim(V)} \cdot
    V)}{[k(x):k] \cdot e(\cO_{V,x})} \biggr)^{1/{\dim(V)}} \Biggr\}
    = 0
  \]
  by Corollary \ref{cor:seshbirat}, Proposition \ref{prop:laz519}, and
  \cite[Theorem 1.4]{Bir17}.
  Thus, we have \(\varepsilon\bigl(\lVert D \rVert;x\bigr) = \varepsilon(D;x) =
  0\) in this case.
  \par Now suppose that \(D\) is ample.
  Let \(f\colon X' \to X\) be a birational morphism from a projective
  scheme that is an isomorphism at \(x\) as before, and consider an 
  \(\RR\)-numerical equivalence \(f^*D \equiv_\RR A+E\) as in Definition \ref{def:movingsesh}.
  For every integer \(n > 0\), we then have
  \[
    f^*D \equiv_\RR \frac{n-1}{n} f^*D + \frac{1}{n}(A+E)
    = \biggl(\frac{n-1}{n} f^*D + \frac{1}{n}A\biggr) + \frac{1}{n}E
  \]
  where \(A_n \coloneqq \frac{n-1}{n} f^*D + \frac{1}{n}A\) is ample.
  We then have
  \begin{align*}
    \varepsilon\bigl(\lVert D \rVert;x\bigr) &\ge
    \varepsilon\bigl(A_n;f^{-1}(x)\bigr)
    \intertext{by definition.
    Taking limits as \(n \to \infty\), we see that}
    \lim_{n \to \infty} \varepsilon\bigl(A_n;f^{-1}(x)\bigr) &\ge
    \varepsilon\bigl(f^*D;f^{-1}(x)\bigr) = \varepsilon(D;x)
  \end{align*}
  by the lower semi-continuity of Seshadri constants with respect to ample
  perturbations of a nef class
  \cite[Corollary 3.34]{FM21} and by Corollary
  \ref{cor:seshbirat}.
  Thus, we have
  \( \varepsilon(\lVert D \rVert;x) \ge \varepsilon(D;x) \).\medskip
  \par \((\ref{prop:elmnp0963v})\).
  For \(i \in \{1,2\}\), let \(f_i\colon X'_i \to X\) be a birational morphism
  from a projective scheme on which we can write \(f_i^*D_i \equiv_\RR A_i + E_i\)
  as in Definition \ref{def:movingsesh}.
  By \cite[Premi\`ere partie, Corollaire 5.7.12]{RG71} (see also
  \cite[Theorem 2.11]{Con07}),
  we can find a blowup \(X_i \to X'_i\) that is an
  isomorphism at \(x\) such that \(X_i \to X\) is also a blowup along a coherent
  ideal sheaf \(\mathcal{I}_i \subseteq \cO_X\).
  Then, the blowup of \(X\) along the product of ideals
  \(\mathcal{I}_1\mathcal{I}_2\)  yields a blowup \(h \colon X' \to X\) fitting
  into the commutative diagram
  \[
    \begin{tikzcd}[column sep=scriptsize]
      & X'\arrow{ddl}{g_1}\arrow{dl}\arrow{dr}
      \arrow{ddr}[swap]{g_2}\arrow{ddd}[pos=0.6]{h}\\
      X_1\arrow{d}
      & 
      & X_2 \arrow{d}\\
      X'_1\arrow{dr}[swap]{f_1}
      & 
      & X'_2 \arrow{dl}{f_2}\\
      & X
    \end{tikzcd}
  \]
  of birational morphisms.
  Note that by construction, the morphism \(h\) is
  an isomorphism at \(x\), and the morphisms 
  \(g_1\) and \(g_2\) are blowups.
  Denote by \(F_1\) and \(F_2\) the exceptional divisors for \(g_1\) and
  \(g_2\), where we note that \(-F_1\) and \(-F_2\) are \(g_1\)- and
  \(g_2\)-ample.
  We can then write
  \[
    h^*D_i \equiv_\RR g_i^*A_i + g_i^*E_i
    = \biggl(g_i^*A_i - \frac{1}{n} F_i\biggr) + \biggl(g_i^*E_i + \frac{1}{n}
    F_i\biggr)
  \]
  for \(i \in \{1,2\}\), where both \(g_i^*A_i -
  \frac{1}{n} F_i\) are ample for all \(n \gg 0\).
  We then have
  \begin{align*}
    \varepsilon\bigl(A_1;f_1^{-1}(x)\bigr)
    + \varepsilon\bigl(A_2;f_2^{-1}(x)\bigr)
    &= \varepsilon\bigl(g_1^*A_1;h^{-1}(x)\bigr)
    + \varepsilon\bigl(g_2^*A_2;h^{-1}(x)\bigr)\\
    &\le \lim_{n\to\infty}\Biggl(
      \varepsilon\biggl(g_1^*A_1-\frac{1}{n} F_1;h^{-1}(x)\biggr)
    + \varepsilon\biggl(g_2^*A_2-\frac{1}{n} F_2;h^{-1}(x)\biggr) \Biggr)\\
    &\le \lim_{n\to\infty}
    \varepsilon\biggl(g_1^*A_1-\frac{1}{n} F_1+g_2^*A_2-\frac{1}{n} F_2;h^{-1}(x)\biggr)\\
    &\le \varepsilon\bigl(\lVert D+D' \rVert;x)
  \end{align*}
  where the first inequality holds by the lower semi-continuity of Seshadri constants with respect to ample
  perturbations of a nef class \cite[Corollary 3.34]{FM21}, the second inequality holds by definition of Seshadri
  constants, and the last inequality holds since
  \[
    h^*D \equiv_\RR \biggl(g_1^*A_1-\frac{1}{n} F_1+g_2^*A_2-\frac{1}{n}
    F_2\biggr) + \biggl(g_1^*E_1 + \frac{1}{n}
    F_1 + g_2^*E_2 + \frac{1}{n} F_2\biggr)
  \]
  is an \(\RR\)-numerical equivalence of the form in Definition \ref{def:movingsesh}.
\end{proof}
We can prove that \(\varepsilon(\lVert \,\cdot\, \rVert;x)\)
is continuous on a certain open convex cone in \(N^1_\RR(X)\). 
\begin{definition}[cf.\ {\cite[Definition 5.1]{ELMNP09}}]
  Let \(X\) be a complete scheme and let \(x \in X\) be a closed point.
  For \(\bk \in \{\QQ,\RR\}\), we denote by
  \[
    \operatorname{Big}_\bk^{\{x\}}(X) \subseteq N^1_\RR(X)
  \]
  the open convex cone consisting of classes \(D \in N^1_\bk(X)\) for which
  \(\varepsilon(\lVert D \rVert;x) > 0\).
  Note that \(N^1_\bk(X)\) is finite-dimensional by
  \citeleft\citen{Kee03}\citemid Theorem 3.6\citepunct \citen{Kee18}\citemid
  Theorem E2.2\citeright.
\end{definition}
\begin{remark}
  If \(X\) is a normal projective variety, Proposition \ref{prop:bbp23}
  implies that this cone coincides with the cone
  \[
    \Set[\big]{D \in N^1_\RR(X) \given x \notin \Bplus(D)}
  \]
  denoted by \(\operatorname{Big}_\RR^{\{x\}}(X)\) in
  \cite[Definition 5.1]{ELMNP09} since given a birational morphism \(f\colon
  X' \to X\) and an \(\RR\)-numerical equivalence \(f^*D \equiv_\RR A+E\) as in
  Definition \ref{def:movingsesh}, pulling back to the normalization of \(X'\) yields
  an \(\RR\)-numerical equivalence of the same form on a normal projective
  variety birational to \(X\).
\end{remark}
\par The following result says that \(\varepsilon(\lVert \,\cdot\, \rVert;x)\)
is continuous on \(\operatorname{Big}_\RR^{\{x\}}(X)\) for complete varieties
over arbitrary fields.
For smooth complex projective varieties, the stronger statement that \(D \mapsto
\varepsilon(\lVert \,\cdot\, \rVert;x)\) is continuous on the entire space
\(N^1_\RR(X)\) holds
\cite[Theorem 6.2]{ELMNP09} (see Remark \ref{rem:elmnp62}).
\begin{corollary}\label{cor:movingseshcont}
  Let \(X\) be a complete variety and let \(x \in X\) be a closed point.
  Then, the function
  \[
    \begin{tikzcd}[row sep=0,column sep=1.475em]
      \operatorname{Big}_\RR^{\{x\}}(X) \rar & \RR_{>0}\\
      D \rar[mapsto] & \varepsilon\bigl(\lVert D \rVert;x\bigr)
    \end{tikzcd}
  \]
  is Lipschitz continuous on each compact subset of
  \(\operatorname{Big}_\RR^{\{x\}}(X)\).
  In particular, \(D \mapsto \varepsilon(\lVert D \rVert;x)\) is continuous on
  \(\operatorname{Big}_\RR^{\{x\}}(X)\).
\end{corollary}
\begin{proof}
  This follows from statements \((\ref{prop:elmnp0963i})\),
  \((\ref{prop:elmnp0963ii})\), and \((\ref{prop:elmnp0963v})\) in
  Proposition \ref{prop:elmnp0963} and continuity properties of convex functions
  \cite[Theorem 2.2]{Gru07} (see \cite[Proof of Theorem 5.2\((a)\) and Remark
  5.4]{ELMNP09}).
\end{proof}
\begin{remark}\label{rem:elmnp62}
  For smooth complex projective varieties, Ein, Lazarsfeld,
  Musta\c{t}\u{a},
  Nakamaye, and Popa prove the stronger statement that \(\varepsilon(\lVert
  \,\cdot\, \rVert;x)\) is continuous on \(N^1_\RR(X)\) \cite[Theorem
  6.2]{ELMNP09}.
  Proving this stronger statement would require extending the continuity results
  on restricted volume functions in \cite[\S5]{ELMNP09} to our setting.
  As far as we are aware, \cite[Theorem 6.2]{ELMNP09} is not known to hold
  even for smooth projective varieties
  over algebraically closed fields of positive characteristic.
\end{remark}
\subsection{Seshadri constants and generation of jets}
We can characterize Seshadri constants and moving Seshadri constants
in terms of generation of jets.
Generation of jets is defined as follows.
We follow the convention for \(s(L;x) = -\infty\) from \cite[p.\ 96]{Dem92},
which differs from the convention \(s(L;x) = -1\) from \cite[Definition
5.1.16]{Laz04a} and \cite[Definition 5.1]{FM21}.
Our convention is chosen so that the superadditivity property
\eqref{eq:superadditivity} below always holds.
\begin{citeddef}[{\cite[p.\ 96]{Dem92}}]\label{def:demaillys}
  Let \(X\) be a complete scheme and let \(L\) be a \(\ZZ\)-invertible sheaf on
  \(X\).
  Consider a closed point \(x \in X\) and denote by \(\fm_x \subseteq \cO_X\)
  the coherent ideal sheaf defining \(x\).
  For every integer \(\ell \ge 0\), we say that \textsl{\(L\) generates
  \(\ell\)-jets at \(x\)} if the restriction map
  \[
    H^0\bigl(X,\cO_X(L)\bigr) \longrightarrow H^0\bigl(X,\cO_X(L)
    \otimes_{\cO_X}
    \cO_X/\fm_x^{\ell+1}\bigr)
  \]
  is surjective.
  We denote by \(s(L;x)\) the largest integer \(\ell \ge 0\) such that
  \(L\) generates \(\ell\)-jets at \(x\).
  If no such \(\ell\) exists, then we set \(s(L;x) = -\infty\).
\end{citeddef}
In \cite[(6.3)]{Dem92}, Demailly introduced the following version of Seshadri
constants defined using generation of jets, which he denoted by \(\sigma(L,x)\).
\begin{citeddef}[{\citeleft\citen{Dem92}\citemid (6.3)\citeright}]
  \label{def:jetsepsesh}
  Let \(X\) be a complete scheme and let \(L\) be a Cartier divisor on \(X\).
  Consider a closed point \(x \in X\).
  We set
  \begin{equation}\label{eq:movingsesh}
    \varepsilon_{\mathrm{jet}}\bigl( \lVert L \rVert;x \bigr) \coloneqq
    \limsup_{m \to \infty} \frac{s(mL;x)}{m}
  \end{equation}
  if \(s(mL;x) \ge 0\) for some \(m > 0\), and set
  \(\varepsilon_{\mathrm{jet}}( \lVert L \rVert;x) = 0\) otherwise.
  For \(\ZZ\)-invertible sheaves \(L_1,L_2\) on \(X\), we have the superadditivity
  property \cite[p.\ 97]{Dem92} (see \cite[Lemma 5.4]{FM21} for a proof)
  \begin{equation}\label{eq:superadditivity}
    s(L_1;x) + s(L_2;x) \le s(L_1 + L_2;x).
  \end{equation}
  Thus, Fekete's lemma \cite[Part I, Chapter 3, No.\ 98]{PS98} implies that
  \begin{equation}\label{eq:feketeappz}
    \limsup_{m \to \infty} \frac{s(mL;x)}{m} =
    \lim_{m \to \infty} \frac{s(mL;x)}{m}
    = \sup_{m \ge1} \frac{s(mL;x)}{m}.
  \end{equation}
  We can therefore extend the definition of moving Seshadri constants to
  \(\QQ\)-invertible sheaves \(D\) on \(X\) by setting
  \[
    \varepsilon_{\jet}\bigl( \lVert D \rVert;x \bigr) \coloneqq
    \frac{1}{n} \cdot \varepsilon_{\jet}\bigl( \lVert L \rVert;x \bigr)
  \]
  where \(L\) is a \(\ZZ\)-invertible sheaf on \(X\)
  such that \(L\) maps to \(nD\) under the map
  \(\Pic(X) \to \Pic_\QQ(X)\).
  This definition does not depend on the choice of \(n\) or \(L\):
  By \eqref{eq:feketeappz},
  we can pass to a subsequence consisting of all sufficiently divisible \(m\) to
  compute 
  \eqref{eq:movingsesh}, and then apply Lemma
  \ref{lem:sbwelldef}\((\ref{lem:baseidealinvsheafiso})\).
\end{citeddef}
We prove some basic properties of this constant.
\begin{proposition}\label{prop:jetseshelem}
  Let \(X\) be a complete scheme over a field \(k\) and let \(D\) be an
  \(\QQ\)-invertible sheaf on \(X\).
  Consider a closed point \(x \in X\).
  \begin{enumerate}[label=\((\roman*)\),ref=\roman*]
    \item\label{prop:jetseshvol}
      Suppose that \(X\) is a complete variety.
      We then have
      \[
        \varepsilon_{\jet}\bigl(\lVert D \rVert;x\bigr) \le
        \biggl(\frac{\vol_{X \vert V}(D)}{[k(x) : k] \cdot
        e(\cO_{V,x})}\biggr)^{1/{\dim(V)}}
      \]
      for every
      positive-dimensional subvariety \(V \subseteq X\) containing \(x\), where
      \(\vol_{X \vert V}(D)\) is defined as in \emph{\cite[Definition
      2.1]{ELMNP09}}.
    \item\label{prop:jetseshnumequiv}
      Suppose that one of the following conditions holds.
      \begin{itemize}
        \item \(X\) is projective and \(x \notin \Bplus(D)\).
        \item \(X\) is normal and \(\varepsilon(\lVert D \rVert;x) > 0\).
      \end{itemize}
      If \(E\) is a \(\QQ\)-invertible sheaf such that \(D \equiv_\QQ E\), then
      \(\varepsilon_{\textup{jet}}(\lVert D \rVert;x) =
      \varepsilon_{\textup{jet}}(\lVert E \rVert;x)\).
    \item\label{prop:jetseshhomog}
      \(\varepsilon_{\textup{jet}}(\lVert \lambda D \rVert;x) = \lambda \cdot
      \varepsilon_{\textup{jet}}(\lVert D \rVert;x)\) for every positive
      rational number \(\lambda\).
    \item\label{prop:jetseshadd}
      Let \(D_1,D_2\) be \(\QQ\)-invertible sheaves such that
      \(s(m_1D_1;x) \ge 0\) and \(s(m_2D_2;x) \ge 0\) for some \(m_1,m_2 \in
      \ZZ_{>0}\).
      Then,
      \[
        \varepsilon_{\textup{jet}}\bigl(\lVert D_1 + D_2 \rVert;x) \ge
        \varepsilon_{\textup{jet}}\bigl(\lVert D_1 \rVert;x\bigr) +
        \varepsilon_{\textup{jet}}\bigl(\lVert D_2 \rVert;x\bigr).
      \]
  \end{enumerate}
\end{proposition}
\begin{proof}
  \par \((\ref{prop:jetseshhomog})\) holds by definition.
  It therefore suffices to show the remaining properties for \(\ZZ\)-invertible
  sheaves.\medskip
  \par We prove \((\ref{prop:jetseshvol})\) when \(D\) is a
  \(\ZZ\)-invertible sheaf.
  Since \((\ref{prop:jetseshvol})\) trivially holds when
  \(\varepsilon_{\textup{jet}}(\lVert D \rVert;x) =
  0\), we may assume that \(\varepsilon_{\textup{jet}}(\lVert D \rVert;x) > 0\).
  With notation as in \cite[Definition 2.1]{ELMNP09}, we have
  \begin{align*}
    \frac{\vol_{X \vert V}(D)}{[k(x):k] \cdot e(\cO_{V,x})} &= \lim_{m \to \infty}
    \frac{h^0\bigl(X \vert V,\cO_X(mD)\bigr)}{m^{\dim(V)}/(\dim(V))!}
    \cdot \lim_{\ell \to \infty} \frac{(\ell+1)^{\dim(V)}/(\dim
    (V))!}{h^0\bigl(V,\cO_V(mD) \otimes \cO_V/\fm_x^{\ell+1}\bigr)}\\
    &= \lim_{m \to \infty}
    \frac{h^0\bigl(X \vert V,\cO_X(mD)\bigr)}{h^0\bigl(V,\cO_V(mD) \otimes
    \cO_V/\fm_x^{s(mD;x)+1}\bigr)} \cdot
    \biggl(\frac{s(mD;x)+1}{m}\biggr)^{\dim(V)},
  \end{align*}
  where the second equality follows from setting \(\ell = s(mD;x)\) and
  the fact that \(s(mD;x) \to \infty\) as \(m \to \infty\).
  By definition of \(s(mD;x)\) and the commutativity of the diagram
  \[
    \begin{tikzcd}
      H^0\bigl(X,\cO_X(mD)\bigr) \rar\dar & H^0\bigl(X,\cO_X(mD)
        \otimes_{\cO_X}
      \cO_X/\fm_x^{\ell+1}\bigr)\dar[twoheadrightarrow]\\
      H^0\bigl(V,\cO_V(mD)\bigr) \rar & H^0\bigl(V,\cO_V(mD) \otimes_{\cO_X}
      \cO_V/\fm_x^{\ell+1}\bigr)
    \end{tikzcd}
  \]
  we have \(h^0(X \vert V,\cO_X(mD)) \ge h^0(V,\cO_V(mD) \otimes
  \cO_V/\fm_x^{s(mD;x)+1})\).
  Thus,
  \[
    \frac{\vol_{X \vert V}(D)}{[k(x):k] \cdot e(\cO_{V,x})}
    \ge \lim_{m \to \infty}
    \biggl(\frac{s(mD;x)+1}{m}\biggr)^{\dim V}
    = \varepsilon_{\textup{jet}}\bigl(\lVert D \rVert;x)^{\dim(V)}.\medskip
  \]
  \par We now prove \((\ref{prop:jetseshnumequiv})\) for \(\ZZ\)-invertible
  sheaves.
  We first reduce the statement for normal \(X\) to the statement for projective
  \(X\).
  Since \(\varepsilon(\lVert D \rVert;x) > 0\), there exists a birational
  morphism \(f\colon X' \to X\) from a projective scheme
  as in Definition \ref{def:movingsesh}, in which case \(x' \coloneqq f^{-1}(x)
  \notin \Bplus(f^*D)\).
  We then have the commutative diagram
  \begin{equation}\label{eq:smdwithnormal}
    \begin{tikzcd}
      H^0\bigl(X,\cO_X(mD)\bigr) \rar \dar[sloped]{\sim} & H^0\bigl(X,\cO_X(mD)
      \otimes_{\cO_X} \cO_X/\fm_x^{\ell+1}\bigr) \dar[sloped]{\sim}\\
      H^0\bigl(X',\cO_{X'}(m\,f^*D)\bigr) \rar & H^0\bigl(X',\cO_{X'}(m\,f^*D)
      \otimes_{\cO_{X'}} \cO_{X'}/\fm_{x'}^{\ell+1}\bigr)
    \end{tikzcd}
  \end{equation}
  where the left vertical map is an
  isomorphism by the fact that \(X\) is normal, and the right vertical map is an
  isomorphism by the fact that \(f\) is an isomorphism at \(x\).
  This shows that
  \[
    \varepsilon_\jet\bigl(\lVert D \rVert;x\bigr) =
    \varepsilon_\jet\bigl(\lVert f^*D \rVert;x'\bigr).
  \]
  Repeating the argument for \(E\), we see that to prove
  \((\ref{prop:jetseshnumequiv})\) for normal \(X\), it suffices to show that
  \[
    \varepsilon_\jet\bigl(\lVert f^*D \rVert;x'\bigr) =
    \varepsilon_\jet\bigl(\lVert f^*E \rVert;x'\bigr).
  \]
  \par We now show \((\ref{prop:jetseshnumequiv})\) for \(\ZZ\)-invertible
  sheaves when \(X\) is projective and \(x \notin \Bplus(D)\).
  First, recall that \(\Bplus(D)\) only depends on the numerical class of
  \(D\),
  and hence \(x \notin \Bplus(E)\) as well.
  By assumption, there exists a numerically trivial \(\ZZ\)-invertible sheaf
  \(P\) such that \(D \sim E + P\), and Theorem \ref{prop:kur13prop27}
  implies that there
  exists a positive integer \(j\) such that \(\cO_X(jD+iP)\) is globally generated
  away from \(\Bplus(D)\) for all integers \(i\).
  For every \(m\), we therefore see that
  \[
    s(mD;x) \le s\bigl( (m+j)D+(m+j)P;x\bigr) = s\bigl( (m+j) E;x\bigr)
  \]
  by setting \(i = m+j\), where the inequality follows from
  \eqref{eq:superadditivity}
  since \(\cO_X(jD+(m+j)P)\) separates \(0\)-jets at \(x\).
  Dividing by \(m\) and taking limits, we see that
  \begin{align*}
    \varepsilon_{\textup{jet}}\bigl(\lVert D \rVert;x\bigr)
    &\le \varepsilon_{\textup{jet}}\bigl(\lVert E \rVert;x\bigr).
    \intertext{Repeating the argument above after switching the roles of \(D\)
    and \(E\), we have}
    \varepsilon_{\textup{jet}}\bigl(\lVert D \rVert;x\bigr) &=
    \varepsilon_{\textup{jet}}\bigl(\lVert E \rVert;x\bigr).\medskip
  \end{align*}
  \par Finally, \((\ref{prop:jetseshadd})\) follows from
  \eqref{eq:superadditivity}.
\end{proof}
By repeating the proof of Corollary \ref{cor:movingseshcont} with Proposition
\ref{prop:jetseshelem} as input, we obtain the
following:
\begin{corollary}
  Let \(X\) be a complete variety and let \(x \in X\) be a closed point.
  Consider the function
  \[
    \begin{tikzcd}[row sep=0,column sep=1.475em]
      \Set[\big]{D \in \Pic_\QQ(X) \given \varepsilon_\jet\bigl(\lVert D
      \rVert;x\bigr) > 0} \rar & \RR_{>0}\\
      D \rar[mapsto] & \varepsilon_\jet\bigl(\lVert D \rVert;x\bigr)\mathrlap{.}
    \end{tikzcd}
  \]
  For every \(D \in \Pic_\QQ(X)\) such that \(\varepsilon_\jet(\lVert D
  \rVert;x) > 0\) and every finite-dimensional \(\QQ\)-subspace \(V \subseteq
  \Pic_\QQ(X)\) containing \(D\), there exists a compact subset of \(V\)
  containing \(D\) on which \(D \mapsto \varepsilon_\jet(\lVert D
  \rVert;x)\) is Lipschitz continuous.
  In particular, \(D \mapsto \varepsilon_\jet(\lVert D \rVert;x)\) is continuous
  on \(V\).
  \par If \(X\) is projective, then the same statements hold
  for the function
  \[
    \begin{tikzcd}[row sep=0,column sep=1.475em]
      \Set[\big]{D \in N^1_\QQ(X) \given x \notin \Bplus(D)} \rar & \RR_{>0}\\
      D \rar[mapsto] & \varepsilon_\jet\bigl(\lVert D \rVert;x\bigr)
    \end{tikzcd}
  \]
  where \(V = N^1_\QQ(X)\).
  If \(X\) is normal, then the same statements hold
  for the function
  \[
    \begin{tikzcd}[row sep=0,column sep=1.475em]
      \Set[\big]{D \in N^1_\QQ(X) \given \varepsilon\bigl(\lVert D
      \rVert;x\bigr) > 0} \rar & \RR_{>0}\\
      D \rar[mapsto] & \varepsilon_\jet\bigl(\lVert D \rVert;x\bigr)
    \end{tikzcd}
  \]
  where \(V = N^1_\QQ(X)\).
\end{corollary}
Seshadri constants can be characterized in terms of generation of jets.
The case of ample invertible sheaves on smooth complex projective
varieties is due to Demailly \cite[Theorem 6.4]{Dem92}.
The case of ample invertible sheaves on projective varieties over algebraically
closed fields is due to Fulger and the author of this paper \cite[Theorem
5.3]{FM21}.
The case of ample invertible sheaves on projective varieties over arbitrary
fields and regular points
\(x \in X\) is due to Das and Waldron \cite[Proposition A.11]{DW22}.
\begin{citedthm}[{\cite[Theorem 5.3 and Proposition 5.5]{FM21}}]
  \label{prop:seshjetample}
  Let \(X\) be a complete scheme and let
  \(D\) be a nef \(\QQ\)-invertible sheaf.
  Consider a closed point \(x \in X\).
  Then, we have
  \begin{equation}\label{eq:seshjetample}
    \varepsilon(D;x) \ge \varepsilon_{\jet}\bigl(\lVert D \rVert;x\bigr).
  \end{equation}
  Equality holds in \eqref{eq:seshjetample} if \(D\) is ample.
\end{citedthm}
\begin{proof}
  While \cite[Theorem 5.3 and Proposition 5.5]{FM21} are stated
  for projective schemes over algebraically closed fields, the proofs of these
  statements work for complete schemes over arbitrary fields.
  Note also that in \cite{FM21}, the definition of the Seshadri constant
  \cite[Definition 3.5]{FM21} is
  different than the one we use (Definition \ref{def:seshadriconstant}).
  The two definitions coincide when \(D\) is nef \cite[Proposition
  3.7]{FM21}.
\end{proof}

\subsection{Nakamaye's definition of moving Seshadri constants}
Next, we define moving Seshadri constants.
Nakamaye defined moving Seshadri constants for smooth complex projective
varieties in \cite{Nak03} as a generalization of the Seshadri
constant that makes sense for divisors that are not necessarily nef.
We have adapted his definition to work for complete schemes over
arbitrary fields.
This constant is denoted by \(\varepsilon_m(x,\,\cdot\,)\) in \cite{Nak03}.
The notation \(\varepsilon'(\lVert \,\cdot\, \rVert;x)\) is from \cite{ELMNP09}.
\par We start with the definition of \(\varepsilon'(\lVert \,\cdot\, \rVert;x)\)
for \(\ZZ\)-invertible sheaves.
\begin{definition}[{cf.\ \citeleft\citen{Nak03}\citemid Definition 0.4\citepunct
  \citen{ELMNP09}\citemid (41)\citeright}]\label{def:seshnakamaye}
  Let \(X\) be a complete scheme over a field \(k\)
  and let \(L\) be a \(\ZZ\)-invertible sheaf on \(X\).
  Consider a closed point \(x \in X\)
  such that \(x \notin \SB(L)\).
  For every \(m > 0\) such that \(x \notin \Bs(\lvert mL
  \rvert)\), let \(\pi_m\colon X_m \to X\) be the blowup of
  \(\fb(\lvert mL \rvert)\).
  Under \(\pi_m\), the surjection
  \[
    H^0\bigl(X,\cO_X(mL)\bigr) \otimes_k \cO_{X}(-mL) \longtwoheadrightarrow
    \fb\bigl(\lvert mL \rvert\bigr)
  \]
  pulls back to
  \begin{equation}\label{eq:defoffixedpart}
    H^0\bigl(X,\cO_{X}(mL)\bigr) \otimes_k
    \cO_{X_m}\bigl(-\pi^*_m(mL)\bigr)
    \longtwoheadrightarrow \pi_m^{-1}\bigl(\fb\bigl(\lvert mL \rvert\bigr)\bigr)
    \cdot \cO_{X_m} = \cO_{X_m}(-F_m).
  \end{equation}
  Twisting by \(\pi_m^*(mL)\) and setting \(M_m = \pi_m^*(mL) - F_m\),
  we see that \(\cO_{X_m}(M_m)\) is globally generated, and we have
  \[
    \pi_m^*(mL) = M_m + F_m.
  \]
  We then set
  \begin{equation}\label{eq:naklimsup}
    \varepsilon'\bigl(\lVert L \rVert;x\bigr) \coloneqq \limsup_{m \to \infty}
    \frac{\varepsilon\bigl(M_m;\pi_m^{-1}(x)\bigr)}{m}
  \end{equation}
  where the \(m\) range over all \(m \ge 1\) such that
  \(x \notin \Bs(\lvert mL \rvert)\).
  If \(x \in X\) is a closed point such that \(x \in \SB(L)\), we set
  \(\varepsilon'(\lVert L \rVert;x) = 0\).
\end{definition}
\begin{remark}
  For smooth complex projective varieties, this definition matches those in
  \cite{Nak03,ELMNP09}.
  In \cite{Nak03,ELMNP09}, one uses a log resolution \(\pi_m\colon X_m \to X\) of
  the base ideal that is an isomorphism at \(x\) instead of the blowup of the
  base ideal.
  This does not change the values of the Seshadri constants
  \(\varepsilon(M_m;\pi_m^{-1}(x))\) because of Corollary \ref{cor:seshbirat}.
\end{remark}
We show that the limit supremum in \eqref{eq:naklimsup} is in fact a limit.
The key input is the following superadditivity result.
\begin{lemma}\label{lem:naksuperadd}
  Let \(X\) be a complete scheme and let \(L,L'\) be \(\ZZ\)-invertible
  sheaves on \(X\).
  Let \(x \in X\) be a closed point such that
  \[
    x \notin \Bs\bigl(\lvert L \rvert\bigr)_\red \cup \Bs\bigl(\lvert L'
    \rvert\bigr)_\red.
  \]
  Let \(\pi\colon X \to X\), \(\pi'\colon X' \to X\), and
  \(\pi''\colon X'' \to X\) be the blowups of \(\fb(\lvert L \rvert)\),
  \(\fb(\lvert L' \rvert)\), and \(\fb(\lvert L+L' \rvert)\), respectively, and
  write
  \begin{align*}
    \pi^{*}L &= M + F\\
    \pi^{\prime*}L' &= M' + F'\\
    \pi^{\prime\prime*}(L+L') &= M'' + F''
  \end{align*}
  where \(F,F',F''\) are defined as in \eqref{eq:defoffixedpart}.
  Then, we have
  \begin{equation}\label{eq:seshmovingpartsuperadd}
    \varepsilon\bigl(M'';\pi^{\prime\prime-1}(x)\bigr) \ge 
    \varepsilon\bigl(M;\pi^{-1}(x)\bigr) +
    \varepsilon\bigl(M';\pi^{\prime-1}(x)\bigr).
  \end{equation}
\end{lemma}
\begin{proof}
  Note that \(x \notin \Bs(\lvert L+L' \rvert)_\red\).
  Let \(\tilde{\pi}\colon \tilde{X} \to X\) be the blowup of \(\fb(\lvert
  L\rvert) \cdot \fb(\lvert L'\rvert) \cdot \fb(\lvert L+L'\rvert)\).
  Then, \(\pi\) is a birational morphism from a complete scheme 
  that dominates \(\pi\), \(\pi'\), and \(\pi''\) and is an isomorphism
  at \(x\).
  We fix the following notation:
  \[
    \begin{tikzcd}[row sep=large]
      & \tilde{X}\arrow{dl}[swap]{\vphantom{\rho'}\rho}
      \arrow{d}[description]{\rho''}
      \arrow{dr}{\rho'}\\
      X\vphantom{X'}\arrow{dr}[swap]{\vphantom{\pi'}\pi}
      & X''\arrow{d}[description]{\pi''}
      & X' \arrow{dl}{\pi'}\\
      & X\mathrlap{.}
    \end{tikzcd}
  \]
  \par We claim that
  the Cartier divisor
  \[
    \tilde{F} \coloneqq \rho^*F + \rho^{\prime*}F' - \rho^{\prime\prime*}F''
  \]
  is effective.
  Consider the commutative diagram
  \[
    \begin{tikzcd}[column sep=1.475em]
      H^0\bigl(X,\cO_X(L)\bigr) \otimes_k H^0\bigl(X,\cO_X(L')\bigr) \otimes_k
      \cO_{\tilde{X}}\bigl(-\tilde{\pi}^*\bigl(L+L'\bigr)\bigr) \rar\dar
      & \cO_{\tilde{X}}\dar[equals]\\
      H^0\bigl(X,\cO_X\bigl(L+L'\bigr)\bigr) \otimes_k
      \cO_{\tilde{X}}\bigl(-\tilde{\pi}^*\bigl(L+L'\bigr)\bigr)
      \rar & \cO_{\tilde{X}}
    \end{tikzcd}
  \]
  where the left vertical map is obtained by pulling back the multiplication map
  on global sections and twisting by \(-\tilde{\pi}^*(L+L')\).
  Pulling back and tensoring the two surjections \eqref{eq:defoffixedpart} for \(L\)
  and \(L'\), we see that the top horizontal map has image
  \(\cO_{X'}(-(\rho^*F + \rho^{\prime*}F'))\).
  Pulling back the surjection \eqref{eq:defoffixedpart} for \(L+L'\), we see that the
  bottom horizontal map has image \(\cO_{X'}(-\rho^{\prime\prime*}F'')\).
  By the commutativity of this diagram, this shows that
  \[
    \cO_{X'}\bigl(-(\rho^*F + \rho^{\prime*}F')\bigr) \hooklongrightarrow
    \cO_{X'}(-\rho^{\prime\prime*}F'')
  \]
  as sub-\(\cO_{\tilde{X}}\)-modules of \(\cO_{\tilde{X}}\).
  Thus, we have 
  \[
    \rho^{\prime\prime*}F'' \le \rho^*F + \rho^{\prime*}F',
  \]
  and hence \(\tilde{F}\) is effective.
  \par We now prove \eqref{eq:seshmovingpartsuperadd}.
  First, we have
  \begin{align*}
    \varepsilon\bigl(M;\pi^{-1}(x)\bigr)
    &= \varepsilon\bigl(\rho^*M;\tilde{\pi}^{-1}(x)\bigr)\\
    \varepsilon\bigl(M';\pi^{\prime-1}(x)\bigr)
    &= \varepsilon\bigl(\rho^{\prime*}M';\tilde{\pi}^{-1}(x)\bigr)\\
    \varepsilon\bigl(M'';\pi^{\prime\prime-1}(x)\bigr)
    &= \varepsilon\bigl(\rho^{\prime\prime*}M'';\tilde{\pi}^{-1}(x)\bigr)
  \end{align*}
  by Corollary \ref{cor:seshbirat}.
  Since \(\tilde{\pi}^*(L+L') = \tilde{\pi}^*L+\tilde{\pi}^*L'\) implies
  \begin{align*}
    \rho^{\prime\prime*}M'' + \rho^{\prime\prime*}F'' &= \rho^{*}M
    + \rho^{*}F + \rho^{\prime*}M' + \rho^{\prime*}F',
    \intertext{solving for \(\rho^{\prime\prime*}M''\) yields}
    \rho^{\prime\prime*}M'' &= \rho^{*}M + \rho^{\prime*}M' + \tilde{F}
  \end{align*}
  where as shown above, \(\tilde{F}\) is an effective Cartier divisor.
  We therefore have
  \begin{align*}
    \varepsilon\bigl(M'';\pi^{\prime\prime-1}(x)\bigr)
    &= \varepsilon\bigl(\rho^{\prime\prime*}M'';\tilde{\pi}^{-1}(x)\bigr)\\
    &= \varepsilon\bigl(\rho^{*}M + \rho^{\prime*}M' +
    \tilde{F};\tilde{\pi}^{-1}(x)\bigr)\\
    &\ge \varepsilon\bigl(\rho^{*}M +
    \rho^{\prime*}M';\tilde{\pi}^{-1}(x)\bigr)\\
    &\ge \varepsilon\bigl(\rho^*M;\tilde{\pi}^{-1}(x)\bigr) + 
    \varepsilon\bigl(\rho^{\prime*}M';\tilde{\pi}^{-1}(x)\bigr)\\
    &= \varepsilon\bigl(M;\pi^{-1}(x)\bigr)
    + \varepsilon\bigl(M';\pi^{\prime-1}(x)\bigr)
  \end{align*}
  where the three equalities hold by the equalities shown above, the first
  inequality holds by Proposition \ref{prop:laz519} and the fact that
  \(\tilde{\pi}^{-1}(x) \notin \Supp(\tilde{F})\),
  and the second inequality holds by definition
  of Seshadri constants.
\end{proof}
We can now show that the limit supremum in \eqref{eq:naklimsup} is in fact a
limit.
The case when \(X\) is a smooth complex projective variety is proved
in \cite{Nak03,ELMNP09}.
\begin{proposition}[cf.\ {\citeleft\citen{Nak03}\citemid p.\ 552\citepunct
  \citen{ELMNP09}\citemid p.\ 645\citeright}]\label{prop:naklimit}
  Let \(X\) be a complete scheme and let \(L\) be a \(\ZZ\)-invertible
  sheaf on \(X\).
  Consider a closed point \(x \in X\) such that \(x \notin \SB(L)\).
  With notation as in Definition \ref{def:seshnakamaye}, we have
  \[
    \varepsilon'\bigl(\lVert L \rVert;x\bigr)
    = \lim_{m \to \infty} \frac{\varepsilon\bigl(M_m;\pi_m^{-1}(x)\bigr)}{m}
    = \sup_{m > 0} \frac{\varepsilon\bigl(M_m;\pi_m^{-1}(x)\bigr)}{m}
  \]
  where the \(m\) range over all \(m \ge 1\) such that
  \(x \notin \Bs(\lvert mL \rvert)\).
  As a consequence, for every integer \(n \ge 1\), we have
  \[
    \varepsilon'\bigl(\lVert nL \rVert;x\bigr) = n \cdot
    \varepsilon'\bigl(\lVert L \rVert;x\bigr).
  \]
\end{proposition}
\begin{proof}
  By Fekete's lemma \cite[Part I, Chapter 3, No.\ 98]{PS98}, it suffices to show
  that the sequence \(\varepsilon(M_m;\pi_m^{-1}(x))\) is superadditive, i.e.,
  that
  \[
    \varepsilon\bigl(M_{m+n};\pi_{m+n}^{-1}(x)\bigr) \ge
    \varepsilon\bigl(M_{m};\pi_{m}^{-1}(x)\bigr) +
    \varepsilon\bigl(M_{n};\pi_{n}^{-1}(x)\bigr)
  \]
  for all \(m,n \ge 1\) such that
  \(x \notin
  \Bs(\lvert mL \rvert) \cup \Bs(\lvert nL \rvert)\).
  This inequality is a special case of Lemma \ref{lem:naksuperadd}.
\end{proof}
By Proposition \ref{prop:naklimit}, we can extend Definition \ref{def:seshnakamaye} to
\(\QQ\)-invertible sheaves.
\begin{definition}[{cf.\ \citeleft\citen{Nak03}\citemid Definition 0.4\citepunct
  \citen{ELMNP09}\citemid (41)\citeright}]
  Let \(X\) be a complete scheme
  and let \(D\) be a \(\QQ\)-invertible sheaf on \(X\).
  Let \(n \ge 1\) be an integer such that \(nD\) is integral, and let \(L\) be a
  \(\ZZ\)-invertible sheaf on \(X\) such that \(L\) maps to \(nD\) under the map
  \(\Pic(X) \to \Pic_\QQ(X)\).
  We then set
  \[
    \varepsilon'\bigl(\lVert D \rVert;x\bigr) \coloneqq \frac{1}{n} \cdot
    \varepsilon'\bigl(\lVert L \rVert;x\bigr).
  \]
  We prove that
  \(\varepsilon'(\lVert D \rVert;x)\) does not depend on the choice of
  \(n\) or \(L\).
  If \(x \in \SB(D)\), then \(x \in \SB(L)\) by Lemma
  \ref{lem:sbwelldef}\((\ref{lem:sbwelldefitem})\) for any choice of \(n\) or
  \(L\), and hence \(\varepsilon'(\lVert D \rVert;x) = 0\).
  If \(x \notin \SB(D)\), then by Proposition \ref{prop:naklimit}, we can
  compute \eqref{eq:naklimsup} by passing to a subsequence consisting of all
  sufficiently divisible \(m\), after which we may apply 
  Lemma \ref{lem:sbwelldef}.
\end{definition}
We show that \(\varepsilon(\lVert D \rVert;x) = \varepsilon'(\lVert D
\rVert;x)\) for normal complete schemes over arbitrary fields.
The case when \(X\) is a smooth complex projective variety is proved in
\cite{ELMNP09}.
\begin{proposition}[{cf.\ \cite[Proposition 6.4]{ELMNP09}}]\label{prop:elmnp64}
  Let \(X\) be a complete scheme over a field \(k\)
  and let \(D\) be a \(\QQ\)-invertible sheaf on
  \(X\).
  For every closed point \(x \in X\), we have
  \begin{equation}\label{eq:elmnp64ineq}
    \varepsilon\bigl(\lVert D \rVert;x\bigr) \ge \varepsilon'\bigl(\lVert D
    \rVert;x\bigr).
  \end{equation}
  Equality holds in \eqref{eq:elmnp64ineq}
  if \(X\) is normal.
\end{proposition}
\begin{proof}
  Since both constants are homogeneous with respect to taking rational multiples
  by Proposition \ref{prop:elmnp0963}\((\ref{prop:elmnp0963iii})\) and
  Proposition \ref{prop:naklimit}, respectively, we reduce to the case when
  \(D\) is replaced by a \(\ZZ\)-invertible sheaf \(L\) that maps to \(mD\) for
  some \(m > 0\).
  Note that \(\SB(D) = \SB(L)\) by Lemma
  \ref{lem:sbwelldef}\((\ref{lem:sbwelldefitem})\).
  We fix notation as in Definition \ref{def:seshnakamaye}.\medskip
  \begin{step}
    The inequality \eqref{eq:elmnp64ineq} holds
    when \(\varepsilon(\lVert L \rVert;x) = 0\).
  \end{step}
  It suffices to show that \(\varepsilon(M_m;\pi_m^{-1}(x)) = 0\) for every \(m
  > 0\).
  Let \(f_m\colon X'_m \to X_m\) be a birational morphism from a projective
  scheme that is an isomorphism at \(x\), which exists by
  \citeleft\citen{Con07}\citemid Corollary 2.6\citepunct \citen{Del10}\citemid
  Corollaire 1.4\citeright.
  We then have
  \[
    (\pi_m \circ f_m)^{-1}(x) \in \Bplus\bigl((\pi_m \circ f_m)^*L\bigr)
    \subseteq \Bplus(f_m^*M_m) \cup
    \Supp(f_m^*F_m)
  \]
  because an \(\RR\)-numerical equivalence \(f_m^*M_m \equiv_\RR A+E\) yields
  an \(\RR\)-numerical equivalence
  \[
    (\pi_m \circ f_m)^*L \equiv_\RR A+E+f_m^*F_m.
  \]
  We therefore see that
  \[
    \varepsilon\bigl(M_m;\pi_m^{-1}(x)\bigr) = \varepsilon\bigl(f_m^*M_m;(\pi_m
    \circ f_m)^{-1}(x)\bigr) = 0
  \]
  by Corollary \ref{cor:seshbirat} and \cite[Theorem 1.4]{Bir17}.\medskip
  \begin{step}
    The inequality \eqref{eq:elmnp64ineq} holds when \(\varepsilon(\lVert L
    \rVert;x) > 0\).
  \end{step}
  It suffices to show that
  \[
    \varepsilon\bigl(\lVert L \rVert;x\bigr) \ge
    \frac{\varepsilon\bigl(M_m;\pi_m^{-1}(x)\bigr)}{m}
  \]
  for all \(m\).
  Let \(f_m\colon X'_m \to X_m\) be a birational morphism from a projective
  scheme that is an isomorphism at \(x\), which exists by
  \citeleft\citen{Con07}\citemid Corollary 2.6\citepunct \citen{Del10}\citemid
  Corollaire 1.4\citeright.
  \par If \((\pi_m \circ f_m)^{-1}(x) \in \Bplus(f_m^*M_m)\), then 
  \[
    \varepsilon\bigl(M_m;\pi_m^{-1}(x)\bigr) = \varepsilon\bigl(f_m^*M_m;(\pi_m
    \circ f_m)^{-1}(x)\bigr) = 0
  \]
  by Corollary \ref{cor:seshbirat} and \cite[Theorem 1.4]{Bir17}.
  \par It therefore suffices to consider the case when
  \((\pi_m \circ f_m)^{-1}(x) \notin
  \Bplus(f_m^*M_m)\).
  Write \(f_m^*M_m \equiv_\RR A+E\) where \(A\) is an ample \(\QQ\)-Cartier
  divisor and \(E\) is an effective \(\RR\)-Cartier divisor such that \((\pi_m
  \circ f_m)^{-1}(x) \notin \Supp(E)\).
  For every \(n \ge 1\), we have
  \[
    f_m^*M_m \equiv_\RR \frac{n-1}{n} f_m^*M_m + \frac{1}{n}(A+E)
    = \biggl(\frac{n-1}{n} f_m^*M_m + \frac{1}{n}A \biggr) + \frac{1}{n}E
  \]
  where \(A_n \coloneqq \frac{n-1}{n} f_m^*M_m + \frac{1}{n}A\) is ample.
  We then have
  \[
    \varepsilon\bigl(\lVert L \rVert;x\bigr) \ge \frac{\varepsilon\bigl(A_n;(\pi_m
    \circ f_m)^{-1}(x)\bigr)}{m}
  \]
  by definition and the homogeneity of \(\varepsilon(\lVert L
  \rVert;x)\) (Proposition \ref{prop:elmnp0963}\((\ref{prop:elmnp0963iii})\)).
  Taking limits as \(n \to \infty\), we see that
  \[
    \lim_{n \to \infty} \varepsilon\bigl(A_n;(\pi_m
    \circ f_m)^{-1}(x)\bigr) \ge \varepsilon\bigl(f_m^*M_m;(\pi_m
    \circ f_m)^{-1}(x)\bigr) = \varepsilon\bigl(M_m;\pi_m^{-1}(x)\bigr)
  \]
  by the lower semi-continuity of Seshadri constants with respect to ample
  perturbations of a nef class \cite[Corollary 3.34]{FM21} and
  Corollary \ref{cor:seshbirat}.\medskip
  \begin{step}
    Equality in \eqref{eq:elmnp64ineq} holds when \(X\) is
    normal and \(x \in \SB(L)\).
  \end{step}
  In this case, we have \(\varepsilon'(\lVert L
  \rVert;x) = 0\) by definition.
  We proceed by contradiction.
  Suppose that \(\varepsilon(\lVert L \rVert;x) > 0\), in which case there
  exists a birational morphism \(f\colon X' \to X\) and an
  \(\RR\)-numerical equivalence \(f^*L \equiv_\RR A+E\)
  as in Definition \ref{def:movingsesh}.
  We then have \(f^{-1}(x) \notin \Bplus(f^*L)\) by \eqref{eq:bpluselmnpdef}.
  Since \(\Bplus(f^*L) \supseteq \SB(f^*L)\), we see that
  \[
    f^{-1}(x) \notin \SB(f^*L) = f^{-1}\bigl(\SB(L)\bigr)
  \]
  by Lemma \ref{lem:basebirat}.
  Thus, we have \(x \notin \SB(L)\), a contradiction.\medskip
  \begin{step}\label{step:elmnp64four}
    Equality in \eqref{eq:elmnp64ineq} holds when \(X\)
    is normal and \(x \notin \SB(L)\).
  \end{step}
  If \(\varepsilon(\lVert L \rVert;x) = 0\), there is nothing to show.
  We may therefore assume that \(\varepsilon(\lVert L \rVert;x) > 0\).
  Consider a birational morphism \(f\colon X' \to X\) from a projective scheme
  and an \(\RR\)-numerical equivalence \(f^*L \equiv_\RR A+E\) as in Definition
  \ref{def:movingsesh}.
  \par We first perturb the coefficients in \(E\) and replace \(A\) to obtain a
  decomposition
  \[
    f^*L = A' + E'
  \]
  as \(\QQ\)-Cartier divisors where \(A'\) is ample and \(E'\) is effective such
  that \(f^{-1}(x) \notin \Supp(E')\).
  Write \(E = \sum e_iE_i\) for \(e_i \in \RR_{\ge0}\).
  By the openness of the ample cone \citeleft\citen{Kee03}\citemid Theorem
  3.9\citepunct \citen{Kee18}\citemid Theorem E2.2\citeright,
  for rational numbers \(e_i'\)
  such that \(\lvert e_i - e_i' \rvert \ll 1\),
  we know that
  \[
    f^*L \equiv_\RR \biggl(A + \sum (e_i-e_i')E_i \biggr)
    + \sum_i e_i'E_i
  \]
  and \(A + \sum (e_i-e_i')E_i\) is ample.
  Thus, the \(\QQ\)-Cartier divisor \(A' \coloneqq f^*L - \sum_i e_i'E_i\) is
  ample and satisfies
  \(f^*L = A' + E'\)
  for \(E' \coloneqq \sum_i e_i'E_i\).
  \par For each \(m\), we find a blowup \(h_m\colon X_m' \to X\) that dominates
  both \(X'\) and \(X_m\).
  By \cite[Premi\`ere partie, Corollaire 5.7.12]{RG71} (see also
  \cite[Theorem 2.11]{Con07}),
  we can find a blowup \(X'' \to X'\) that is an
  isomorphism at \(x\) such that \(X'' \to X\) is also a blowup along a coherent
  ideal sheaf \(\mathcal{I} \subseteq \cO_X\).
  The normalized blowup of \(X\) along the product of ideals
  \(\mathcal{I} \cdot \fb(\lvert mL \rvert)\) yields a blowup \(h_m\colon X'_m
  \to X\) from a normal projective scheme fitting into the commutative diagram
  \[
    \begin{tikzcd}[column sep=scriptsize]
      & X'_m \arrow{dl} \arrow{ddl}{g_m}
      \arrow{ddr}[swap]{\rho_m} \arrow{ddd}[pos=0.6]{h_m}\\
      X'' \dar[swap]{g}\\
      X' \arrow{dr}[swap]{f} & & X_m \arrow{dl}{\pi_m}\\
      & X
    \end{tikzcd}
  \]
  of birational morphisms.
  Note that by construction, the morphism \(h_m\) is an isomorphism at \(x\).
  \par For \(m\) sufficiently large and divisible, we now have the
  decomposition
  \[
    f^*(mL) = mA' + mE'
  \]
  where \(mA'\) and \(mE'\) are Cartier divisors.
  After possibly replacing \(m\) by a multiple, we may assume that \(mA'\) is
  free.
  Since in the decomposition
  \[
    h_m^*(mL) = g_m^*(mA') + g_m^*(mE')
  \]
  the Cartier divisor \(g_m^*(mA')\) is free, and since the fact that \(X\) is
  normal implies
  \begin{align*}
    \bigl\lvert h_m^*(mL) \bigr\rvert &= \bigl\lvert \rho_m^*M_m \bigr\rvert +
    \rho_m^*F_m
    \intertext{is the decomposition of \(\lvert h_m^*(mL) \rvert\) into its
    mobile and fixed parts, we see that \(g_m^*(mE') \ge \rho_m^*F_m\).
    Thus, we have}
    \rho_m^*M_m - g_m^*mA' &\sim g_m^*(mE') - \rho_m^*F_m \eqqcolon G_m
  \end{align*}
  where \(G_m\) is an effective Cartier divisor on \(X_m'\).
  Since \(\Supp(G_m) \subseteq \Supp(g_m^*(mE'))\),
  we know that \(h_m^{-1}(x) \notin \Supp(G_m)\), and hence
  \begin{align*}
    \frac{\varepsilon\bigl(M_m;\pi_m^{-1}(x)\bigr)}{m} 
    = \frac{\varepsilon\bigl(\rho_m^*M_m;h_m^{-1}(x)\bigr)}{m} &\ge
    \varepsilon\bigl(g_m^*A';h_m^{-1}(x)\bigr) =
    \varepsilon\bigl(A';f^{-1}(x)\bigr)
    \intertext{where the equalities hold by Corollary \ref{cor:seshbirat} and the inequality
    holds by Proposition \ref{prop:laz519} and the fact that \(h_m^{-1}(x) \notin
    \Supp(G_m)\).
    Taking the limit as \(E' \to E\), we see that \([A'] \to [A]\) as numerical
    classes in \(N^1_\RR(X')\).
    By the continuity of Seshadri constants on the ample cone \cite[Corollary
    3.34]{FM21}, we obtain}
    \frac{\varepsilon\bigl(M_m;\pi_m^{-1}(x)\bigr)}{m} &\ge
    \varepsilon\bigl(A;f^{-1}(x)\bigr).\qedhere
  \end{align*}
\end{proof}
We can now prove properties of \(\varepsilon'(\lVert D \rVert;x)\) analogous to
Proposition \ref{prop:elmnp0963}.
\begin{proposition}[cf.\ {\citeleft\citen{Nak03}\citemid p.\ 552\citeright}]
  Let \(X\) be a complete scheme over a field \(k\) and 
  let \(D\) be an \(\QQ\)-invertible sheaf on \(X\).
  Consider a closed point \(x \in X\).
  \begin{enumerate}[label=\((\roman*)\),ref=\roman*]
    \item\label{prop:nakelmnp0963i} Suppose that \(X\) is a complete variety.
      We then have
      \[
        \varepsilon'\bigl(\lVert D \rVert;x\bigr) \le
        \biggl(\frac{\vol_X(D)}{[k(x):k] \cdot
        e(\cO_{X,x})}\biggr)^{1/{\dim(X)}}.
      \]
    \item\label{prop:nakelmnp0963ii}
      Suppose that \(X\) is normal.
      If \(D\) and \(E\) are \(\QQ\)-numerically equivalent \(\QQ\)-invertible
      sheaves, then \(\varepsilon'(\lVert D \rVert;x) = \varepsilon'(\lVert E
      \rVert;x)\).
    \item\label{prop:nakelmnp0963iii}
      \(\varepsilon'(\lVert \lambda D \rVert;x) = \lambda \cdot
      \varepsilon'(\lVert D \rVert;x)\) for every positive rational number
      \(\lambda\).
    \item\label{prop:nakelmnp0963iv}
      If \(D\) is a nef \(\QQ\)-invertible sheaf, then \(\varepsilon'(\lVert D
      \rVert;x) \le \varepsilon(D;x)\).
      Equality holds if \(X\) is normal and either
      \(\varepsilon'(\lVert D \rVert;x) = 0\) or \(D\) is ample.
    \item\label{prop:nakelmnp0963v}
      Let \(D_1,D_2\) be \(\QQ\)-invertible sheaves such that \(x \notin
      \SB(D) \cup \SB(D')\).
      Then, we have
      \[
        \varepsilon'\bigl(\lVert D_1+D_2 \rVert;x\bigr) \ge 
        \varepsilon'\bigl(\lVert D_1 \rVert;x\bigr)
        + \varepsilon'\bigl(\lVert D_2 \rVert;x\bigr).
      \]
  \end{enumerate}
\end{proposition}
\begin{proof}
  \par \((\ref{prop:nakelmnp0963i})\), \((\ref{prop:nakelmnp0963ii})\), and
  \((\ref{prop:nakelmnp0963iv})\)
  follow from Proposition \ref{prop:elmnp64} and Proposition
  \ref{prop:elmnp0963}.
  \((\ref{prop:nakelmnp0963iii})\) holds by Proposition \ref{prop:naklimit}
  and the
  definition of \(\varepsilon'(\lVert D \rVert;x)\) for \(\QQ\)-invertible
  sheaves \(D\).
  \((\ref{prop:nakelmnp0963v})\) follows from Lemma \ref{lem:naksuperadd}.
\end{proof}
By Proposition \ref{prop:elmnp64} and Corollary \ref{cor:movingseshcont} (or by
repeating the proof of Corollary \ref{cor:movingseshcont}), we obtain the
following:
\begin{corollary}
  Let \(X\) be a normal complete variety and let \(x \in X\) be a closed point.
  Then, the function
  \[
    \begin{tikzcd}[row sep=0,column sep=1.475em]
      \operatorname{Big}_\QQ^{\{x\}}(X) \rar & \RR_{>0}\\
      D \rar[mapsto] & \varepsilon'\bigl(\lVert D \rVert;x\bigr)
    \end{tikzcd}
  \]
  is Lipschitz continuous on each compact subset of
  \(\operatorname{Big}_\QQ^{\{x\}}(X)\), and extends to a function on
  \(\operatorname{Big}_\RR^{\{x\}}(X)\) that is
  Lipschitz continuous on each compact subset of
  \(\operatorname{Big}_\RR^{\{x\}}(X)\).
  In particular, \(D \mapsto \varepsilon'(\lVert D \rVert;x)\) extends to a
  continuous function on \(\operatorname{Big}_\RR^{\{x\}}(X)\).
\end{corollary}
\subsection{Moving Seshadri constants and generation of jets}
We now prove the following characterization of moving Seshadri constants in
terms of generation of jets.
The case of smooth complex projective varieties is due to Ein, Lazarsfeld,
Musta\c{t}\u{a}, Nakamaye, and Popa \cite{ELMNP09}.
\begin{theorem}[cf.\ {\cite[Proposition 6.6]{ELMNP09}}]\label{thm:seshjet}
  Let \(X\) be a normal complete variety and let \(D\) be a \(\QQ\)-invertible
  sheaf.
  Consider a closed point \(x \in X\).
  Then, we have
  \begin{equation}\label{eq:movseshisjet}
    \varepsilon\bigl(\lVert D \rVert;x\bigr)
    = \varepsilon_{\jet}\bigl(\lVert D \rVert;x\bigr).
  \end{equation}
\end{theorem}
\begin{proof}
  Since both sides are homogeneous, it suffices to prove the case when \(D\) is
  a \(\ZZ\)-invertible sheaf.
  We proceed in steps.\medskip
  \begin{step}
    The inequality \(\le\) holds in \eqref{eq:movseshisjet}.
  \end{step}
  If \(\varepsilon(\lVert D \rVert;x) = 0\), there is nothing to show.
  Otherwise, let \(f\colon X' \to X\) be a birational morphism from a projective
  scheme and let \(f^*D \equiv_\RR A+E\) be an \(\RR\)-numerical equivalence as
  in Definition \ref{def:movingsesh}.
  As in Step \ref{step:elmnp64four} of the proof of Proposition
  \ref{prop:elmnp64}, we can perturb the coefficients \(E\) to obtain a
  decomposition
  \[
    f^*D = A' + E'
  \]
  as \(\QQ\)-Cartier divisors where \(A'\) is ample and \(E'\) is effective such
  that \(f^{-1}(x) \notin \Supp(E')\).
  We then have
  \begin{align*}
    \varepsilon\bigl(A';f^{-1}(x)\bigr) &= \varepsilon_\jet\bigl(\lVert A'
    \rVert;f^{-1}(x)\bigr)\\
    &\le \limsup_{m \to \infty}
    \frac{s\bigl(m\,f^*D;f^{-1}(x)\bigr)}{m}\\
    &= \varepsilon_\jet\bigl(\lVert D
    \rVert;x\bigr)
  \end{align*}
  where the first equality holds by Theorem \ref{prop:seshjetample}, the
  inequality holds by the fact that \(E'\) is effective in the decomposition
  \(f^*D = A'+E'\), and the last equality holds by the normality of \(X\) using
  the commutative diagram \eqref{eq:smdwithnormal}.
  Taking the limit as \(E' \to E\), we see that \([A'] \to [A]\) as numerical
  classes in \(N^1_\RR(X')\).
  By the continuity of Seshadri constants on the ample cone \cite[Corollary
  3.34]{FM21}, we obtain
  \[
    \varepsilon\bigl(A;f^{-1}(x)\bigr) \le \varepsilon_\jet\bigl(\lVert D
    \rVert;x\bigr).\medskip
  \]
  \begin{step}
    The inequality \(\ge\) holds in \eqref{eq:movseshisjet}.
  \end{step}
  If \(x \in \SB(D)\), then \(\varepsilon_\jet(\lVert D \rVert;x) = 0\), in
  which case there is nothing to show.
  We will show that \(\varepsilon(\lVert D \rVert;x) \ge s(mD;x)/m\) for all
  \(m\) such that \(x \notin \Bs(\lvert mD \rvert)\).
  This suffices by \eqref{eq:kurrem24} and \eqref{eq:feketeappz}.
  \par Let \(\pi_m\colon X_m \to X\) be the blowup of \(X\) at \(\fb(\lvert mD
  \rvert)\).
  Let \(f_m\colon X'_m \to X_m\) be a birational morphism from a projective
  scheme that is an isomorphism at \(x\), which exists by
  \citeleft\citen{Con07}\citemid Corollary 2.6\citepunct \citen{Del10}\citemid
  Corollaire 1.4\citeright.
  Set \(x' \coloneqq (\pi_m \circ f_m)^{-1}(x)\).
  Since \(X\) is normal,
  \[
    \bigl\lvert (\pi_m \circ f_m)^*D \bigr\rvert = \lvert f_m^*M_m \rvert +
    f_m^*F_m
  \]
  is the decomposition of \(\lvert (\pi_m \circ f_m)^*D \rvert\) into its mobile
  and fixed parts.
  Moreover, \(x' \notin \Supp(f_m^*F_m)\).
  Using the notation \(1_{f_m^*F_m}\) from
  \cite[\href{https://stacks.math.columbia.edu/tag/01WX}{Tag
  01WX}]{stacks-project}, we see that
  multiplication by \(1_{f_m^*F_m}\) yields the vertical isomorphisms in the top
  half of the commutative diagram
  \[
    \begin{tikzcd}
      H^0\bigl(X'_m,\cO_{X'_m}(f_m^*M_m)\bigr)
      \rar\arrow[hook]{d}[sloped]{\sim} & 
      H^0\Bigl(X'_m,\cO_{X'_m}(f_m^*M_m) \otimes_{\cO_{X'_m}}
      \cO_{X'_m}/\fm_{x'}^{\ell+1}\Bigr) \dar[sloped]{\sim}\\
      H^0\Bigl(X'_m,\cO_{X'_m}\bigl((\pi_m \circ f_m)^*D\bigr)\Bigr) \rar & 
      H^0\Bigl(X'_m,\cO_{X'_m}\bigl((\pi_m \circ f_m)^*D \otimes_{\cO_{X'_m}}
      \cO_{X'_m}/\fm_{x'}^{\ell+1}\Bigr)\\
      H^0\bigl(X,\cO_{X}(mD)\bigr) \rar\arrow[hook']{u}[sloped,swap]{\sim} & 
      H^0\bigl(X,\cO_{X}(mD) \otimes_{\cO_X}
      \cO_X/\fm_x^{\ell+1}\bigr)\mathrlap{.}
      \uar[sloped,swap]{\sim}
    \end{tikzcd}
  \]
  The vertical isomorphisms in the bottom half of the commutative diagram are
  isomorphisms because \(X\) is normal
  and because \(\pi_m \circ f_m\) is an isomorphism at \(x\).
  We therefore see that the bottom horizontal map in the commutative diagram is
  surjective if and only if
  the top horizontal map is surjective.
  Thus, we have the equality
  \begin{align*}
    \frac{s(mD;x)}{m} &= \frac{s(f_m^*M_m;x')}{m}.
    \intertext{Finally, we have}
    \frac{s\bigl(M_m;\pi_m^{-1}(x)\bigr)}{m} &\le \varepsilon_{\jet}\bigl( \lVert
    f_m^*M_m \rVert;x'\bigr)\\
    &\le \varepsilon(f_m^*M_m;x')\\
    &= \varepsilon\bigl(M_m;\pi_m^{-1}(x)\bigr)\\
    &\le \varepsilon'\bigl(\lVert D \rVert;x\bigr)\\
    &= \varepsilon\bigl(\lVert D \rVert;x\bigr)
  \end{align*}
  where the first and third inequalities hold by definition,
  the second inequality holds by Theorem \ref{prop:seshjetample}, the middle
  equality holds by Corollary \eqref{cor:seshbirat}, and the last equality holds
  by Proposition \ref{prop:elmnp64}.
\end{proof}

\section{Frobenius--Seshadri constants}\label{sect:frobsesh}
In this section, we continue developing the theory of Frobenius--Seshadri
constants to prove Theorem \ref{thm:introseshjetample}.
Frobenius--Seshadri constants are a Frobenius variant of Seshadri constants that
were introduced by Musta\c{t}\u{a} and Schwede \cite{MS14} to prove that lower
bounds on Seshadri constants yield adjoint-type divisors
that are free or very ample.
The author of this paper generalized their definition in
\cite{Mur18} to address generation of higher order jets.
See also \cite[Appendix]{DW22}, which develops material from \cite{MS14} over
arbitrary fields.
\subsection{The trace of Frobenius and Schwede's canonical linear system}
Let \(X\) be a locally Noetherian \(F\)-finite scheme of prime characteristic
\(p > 0\) admitting a dualizing complex \(\omega_X^\bullet\).
By Grothendieck duality for finite morphisms, we can consider the Grothendieck
trace map \cite[Chapter III, Proposition 6.5]{Har66}
\begin{align}
  \RR F^e_* F^{e!}\omega_X^\bullet &\longrightarrow \omega_X^\bullet\nonumber
  \intertext{associated to the \(e\)-th iterate of the Frobenius morphism, which is
  compatible with iterations of the Frobenius morphism by \cite[Chapter III,
  Proposition 6.6(1)]{Har66}.
  If \(F^!\omega_X^\bullet \cong \omega_X^\bullet\), we can write the trace map as
  \(\RR F^e_*\omega_X^\bullet \to \omega_X^\bullet\).
  Taking the cohomohology sheaf in lowest degree on each connected component of
  \(X\), we obtain the map}
  \Phi^e\colon F^e_* \omega_X &\longrightarrow \omega_X\label{eq:defphie}
\end{align}
which again is compatible with iterations of the Frobenius morphism.
\begin{remark}
  The hypothesis that \(F^!\omega_X^\bullet \cong \omega_X^\bullet\)
  holds for
  example when \(X\) is separated and essentially of finite type over an
  \(F\)-finite Noetherian semi-local ring \(R\) and the dualizing complex
  \(\omega_X^\bullet\) is the exceptional pullback of a dualizing complex
  \(\omega_R^\bullet\) on \(R\).
  This follows from Grothendieck duality for separated morphisms essentially of
  finite type \cite[Theorem 5.3]{Nay09}, where \(\omega_R^\bullet\)
  exists by the \(F\)-finiteness of \(R\) \cite[Remark 13.6]{Gab04}.
\end{remark}
We now recall the definition of the following canonical linear system due to
Schwede \cite{Sch14}.
These sections are the sections that are stable under the action of the trace of
Frobenius and act as though the Kodaira vanishing theorem were true
\cite[Proposition 5.3]{Sch14}.
\begin{citeddef}[{\citeleft\citen{Sch14}\citemid p.\ 75\citeright}]
  Let \(X\) be a complete scheme over an \(F\)-finite field of characteristic
  \(p > 0\).
  Let \(\sL\) be an invertible sheaf on \(X\).
  We define
  \begin{align*}
    \MoveEqLeft[2]
    S^0(X,\omega_X \otimes_{\cO_X} \sL)\\
    &\coloneqq\bigcap_{e \ge 0}
    \im\Bigl( H^0\bigl(X,\omega_X \otimes_{\cO_X} \sL^{\otimes p^e}\bigr)
    \xrightarrow{H^0(X,\Phi^e \otimes \sL)} H^0\bigl(X,\omega_X \otimes_{\cO_X}
    \sL\bigr)\Bigr)
  \end{align*}
  where \(\Phi^e \otimes \sL\) is obtained by twisting \eqref{eq:defphie} by
  \(\sL\).
  Since \(H^0(X,\omega_X \otimes_{\cO_X} \sL)\)
  is a finite-dimensional \(k\)-vector space, the descending intersection in the
  definition above stabilizes.
\end{citeddef}
\subsection{Frobenius--Seshadri constants}
We define Frobenius--Seshadri constants.
The case when \(\ell = 0\) below is the original notion due to Musta\c{t}\u{a}
and Schwede in \cite{MS14}.
The case when \(\ell > 0\) below is due to the author of this paper
\cite{Mur18}.
Below, we have changed the terminology ``separates \(p^e\)-Frobenius \(\ell\)-jets
at \(x\)'' from \cite{Mur18} to ``generates \(p^e\)-Frobenius \(\ell\)-jets at
\(x\)'' to match the terminology for \(\ell\)-jets in the classical setting.
\begin{citeddef}[\citeleft\citen{MS14}\citemid Definition 2.4\citepunct
  \citen{Mur18}\citemid Definition 2.2\citeright]
  Let \(X\) be a complete scheme over a field \(k\) of characteristic \(p > 0\).
  Let \(L\) be a \(\ZZ\)-invertible sheaf on \(X\).
  Consider a closed point \(x \in X\) with defining coherent ideal sheaf \(\fm_x
  \subseteq \cO_X\).
  For all integers \(\ell \ge -1\) and \(e \ge 0\), we say that
  \(L\) \textsl{generates \(p^e\)-Frobenius \(\ell\)-jets at \(x\)} if the
  restriction map
  \[
    H^0\bigl(X,\cO_X(L)\bigr) \longrightarrow
    H^0\bigl(X,\cO_X(L) \otimes_{\cO_X} \cO_X\big/(\fm_x^{\ell+1})^{[p^e]}\bigr)
  \]
  is surjective.
  \par Now suppose \(\ell \ge 0\) and for each integer \(m \ge 1\), let
  \(s^\ell_F(mL;x)\) be the largest
  integer \(e \ge 0\) such that \(mL\) generates \(p^e\)-Frobenius \(\ell\)-jets
  at \(x\).
  If no such \(e\) exists, set \(s^\ell_F(mL;x) = -\infty\).
  The \textsl{\(\ell\)-th Frobenius--Seshadri constant of \(L\) at \(x\)} is
  \[
    \varepsilon_F^\ell(L;x) \coloneqq \sup_{m \ge 1}
    \frac{p^{s^\ell_F(mL;x)}-1}{m/(\ell+1)}.
  \]
  For all closed points \(x \in X\) such that
  \(\varepsilon_F^\ell(L;x) > 0\), the supremum is in fact a limit supremum
  by \citeleft\citen{MS14}\citemid Proposition
  2.6\((iii)\)\citepunct \citen{Mur18}\citemid Proposition
  2.5\((iv)\)\citeright\ (the assumption that \(L\) is ample in
  \cite{MS14,Mur18} is only used to ensure that \(\varepsilon_F^\ell(L;x) > 0\)).
  If \(\ell = -1\), we set \(\varepsilon_F^{-1}(L;x) \coloneqq +\infty\).
  \par Finally, since Frobenius--Seshadri constants are homogeneous with
  respect to \(L\) as long as \(\varepsilon_F^\ell(L;x) > 0\)
  by \citeleft\citen{MS14}\citemid Proposition 2.8\citepunct
  \citen{Mur18}\citemid Proposition 2.8\citeright\ (the assumption that \(L\)
  is ample in
  \cite{MS14,Mur18} is only used to ensure that \(\varepsilon_F^\ell(L;x) > 0\)),
  we can extend the definition of \(\varepsilon_F^\ell(L;x)\) to
  \(\QQ\)-invertible sheaves.
\end{citeddef}
\subsection{Comparison with Seshadri constants}
We have the following comparison between the
version of Seshadri constants defined using generation of jets (Definition
\ref{def:jetsepsesh}) and Frobenius--Seshadri constants.
\begin{citedprop}[{\citeleft\citen{MS14}\citemid Proposition 2.12\citepunct
  \citen{Mur18}\citemid Proposition 2.9\citepunct
  \citen{DW22}\citemid Proposition A.14\citeright}]
  \label{prop:frobseshcomp}
  Let \(X\) be a complete variety and let \(L\) be a \(\QQ\)-invertible sheaf
  on \(X\).
  Consider a closed point \(x \in X\).
  For every integer \(\ell \ge 0\), we have
  \begin{equation}\label{eq:frobseshcomp}
    \frac{\ell+1}{\ell + n} \cdot \varepsilon_{\jet}\bigl(\lVert L \rVert;x\bigr)
    \le \varepsilon_F^\ell(L;x)
    \le \varepsilon_{\jet}\bigl(\lVert L \rVert;x\bigr).
  \end{equation}
\end{citedprop}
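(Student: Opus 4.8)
The plan is to deduce the two inequalities, as in \cite{MS14,Mur18}, from a purely local comparison of $\fm_x^{\ell+1}$ and its Frobenius power $\bigl(\fm_x^{\ell+1}\bigr)^{[p^e]}$, which is then transported to generation of $\ell$-jets versus $p^e$-Frobenius $\ell$-jets. The one computation I would carry out carefully is the \emph{sandwich}
\[
  \fm_x^{(\ell+n)p^e-n+1} \;\subseteq\; \bigl(\fm_x^{\ell+1}\bigr)^{[p^e]}
  = \bigl(\fm_x^{[p^e]}\bigr)^{\ell+1}
  \;\subseteq\; \fm_x^{(\ell+1)p^e},
\]
valid whenever $\fm_x$ can be generated by at most $n$ elements (for instance at a regular point of an $n$-dimensional variety). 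The equality and the right-hand inclusion are formal, since the generators $f^{p^ec}$ with $\lvert c\rvert=\ell+1$ of $\bigl(\fm_x^{[p^e]}\bigr)^{\ell+1}$ lie in $\fm_x^{(\ell+1)p^e}$. For the left-hand inclusion, writing $\fm_x=(f_1,\dots,f_r)$ with $r\le n$, a monomial $f_1^{\alpha_1}\cdots f_r^{\alpha_r}$ of order $N$ is divisible by one such generator as soon as $\sum_i\lfloor\alpha_i/p^e\rfloor\ge\ell+1$, and a short exponent count (the extremal case being $\alpha_1=\ell p^e$ and $\alpha_i=p^e-1$ for $i\ge2$) shows this is forced precisely once $N\ge\ell p^e+n(p^e-1)+1=(\ell+n)p^e-n+1$. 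Pinning down this \emph{sharp} exponent is the crux: the cruder inclusion $\fm_x^{(\ell+1)(n(p^e-1)+1)}\subseteq\bigl(\fm_x^{\ell+1}\bigr)^{[p^e]}$ would produce only $\tfrac1n$ in place of $\tfrac{\ell+1}{\ell+n}$ in \eqref{eq:frobseshcomp}.

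For the \emph{upper} bound I would use the right-hand inclusion. Since $\cO_X/\fm_x^{(\ell+1)p^e}$ is a quotient of $\cO_X/\bigl(\fm_x^{\ell+1}\bigr)^{[p^e]}$, if $mL$ generates $p^e$-Frobenius $\ell$-jets at $x$ then it generates $\bigl((\ell+1)p^e-1\bigr)$-jets at $x$; taking $e=s_F^\ell(mL;x)$ gives
\[
  (\ell+1)\bigl(p^{s_F^\ell(mL;x)}-1\bigr)\;\le\;s(mL;x)-\ell\;\le\;s(mL;x)
\]
for every $m\ge1$. Dividing by $m$ and passing to the limit supremum over $m$ — which computes $\varepsilon_F^\ell(L;x)$, by the definition recalled above and the fact, recalled there, that the defining supremum is a limit supremum when it is positive (the case $\varepsilon_F^\ell(L;x)=0$ being trivial, as $s(mL;x)\ge-1$ forces $\lim_m s(mL;x)/m\ge0$) — yields $\varepsilon_F^\ell(L;x)\le\lim_m s(mL;x)/m$.

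For the \emph{lower} bound I would use the left-hand inclusion: if $s(mL;x)+1\ge(\ell+n)p^e-n+1$, i.e.\ $s(mL;x)\ge(\ell+n)(p^e-1)+\ell$, then $\fm_x^{s(mL;x)+1}\subseteq\bigl(\fm_x^{\ell+1}\bigr)^{[p^e]}$, so $mL$ generates $p^e$-Frobenius $\ell$-jets at $x$ and $s_F^\ell(mL;x)\ge e$. Set $\sigma\coloneqq\lim_m s(mL;x)/m$ and assume $\sigma>0$ (otherwise the claimed lower bound is $0$). Fix $\eta\in(0,1)$ and put $T_e\coloneqq(\ell+n)(p^e-1)+\ell$. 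For $e\gg0$ the integer $m_e\coloneqq\bigl\lceil(1+\eta)T_e/\sigma\bigr\rceil$ is large enough that $s(m_eL;x)\ge(1-\tfrac{\eta}{2})\sigma\,m_e\ge T_e$, whence $s_F^\ell(m_eL;x)\ge e$ and
\[
  \varepsilon_F^\ell(L;x)\;\ge\;\frac{(\ell+1)\bigl(p^{s_F^\ell(m_eL;x)}-1\bigr)}{m_e}
  \;\ge\;\frac{(\ell+1)(p^e-1)}{m_e}\;\xrightarrow{\,e\to\infty\,}\;\frac{(\ell+1)\,\sigma}{(1+\eta)(\ell+n)},
\]
using $T_e/(p^e-1)\to\ell+n$. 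Letting $\eta\to0$ yields the left-hand inequality of \eqref{eq:frobseshcomp}.

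The genuinely delicate point is the ideal sandwich, and in particular pinning down the sharp exponent $(\ell+n)p^e-n+1$, since any lazier bound silently degrades the constant. The remaining subtleties are minor: that $\varepsilon_F^\ell(L;x)$ is only known to be a limit supremum, not a maximum, so the lower bound must be fed along a well-chosen sequence $m_e$; and the degenerate cases $\varepsilon_F^\ell(L;x)=0$ or $\sigma=0$. Everything else — the quotient-sheaf containments and the passage to limits — is bookkeeping, once one knows that $\lim_m s(mL;x)/m$ exists, which in the projective normal case is Theorem \ref{thm:sesheqgenjets} and identifies this limit with the moving Seshadri constant $\varepsilon(\lVert L\rVert;x)$.
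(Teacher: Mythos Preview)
Your argument is essentially the one in \cite{MS14,Mur18} and is correct at \emph{regular} closed points, but the proposition is stated for an arbitrary closed point \(x \in X\), and this is precisely the generality the paper is after (see the paragraph immediately preceding Lemma~\ref{lem:pigeonhole}). At a singular point, \(\fm_x\) need not be generated by \(n = \dim X\) elements: its minimal number of generators is the embedding dimension \(\dim_{k(x)} \fm_x/\fm_x^2\), which can be strictly larger than \(n\). Your pigeonhole count then yields only
\[
  \fm_x^{(\ell+r)p^e - r + 1} \subseteq \bigl(\fm_x^{\ell+1}\bigr)^{[p^e]}
\]
with \(r\) the embedding dimension, and the constant in \eqref{eq:frobseshcomp} degrades to \(\tfrac{\ell+1}{\ell+r}\). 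You flag the hypothesis ``whenever \(\fm_x\) can be generated by at most \(n\) elements,'' but you do not address what happens when it fails.

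The paper's fix (Lemma~\ref{lem:pigeonhole}) is to replace the number of generators by the \emph{analytic spread}, which for \(\fm_x\) equals \(\dim \cO_{X,x} \le n\). After a faithfully flat extension to make the residue field infinite, a minimal reduction \(\fq \subseteq \fm_x\) exists and is generated by \(\dim \cO_{X,x}\) elements; one has \(\fm_x^{s+t} = \fq^s\fm_x^t\) for some fixed \(t \ge 0\) and all \(s \ge 0\). Running your pigeonhole argument on \(\fq\) rather than on \(\fm_x\) gives
\[
  \fm_x^{\ell p^e + n(p^e-1) + 1 + t} \subseteq \bigl(\fq^{\ell+1}\bigr)^{[p^e]} \subseteq \bigl(\fm_x^{\ell+1}\bigr)^{[p^e]},
\]
and the extra additive constant \(t\) disappears in the limit as \(e \to \infty\), so the rest of your lower-bound argument goes through unchanged. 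Your upper-bound argument, which uses only the formal inclusion \((\fm_x^{\ell+1})^{[p^e]} \subseteq \fm_x^{(\ell+1)p^e}\), is fine at any closed point.
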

\par We start with the following commutative algebraic result.
In the statement below, the \textsl{analytic spread} of an ideal \(I\) of a
Noetherian local ring \((R,\fm)\) is defined to be the Krull dimension of the
fiber cone \(R[It]/\fm R[It]\) \cite[Definition 5.1.5]{SH06}.
\begin{lemma}\label{lem:pigeonhole}
  Let \((R,\fm,k)\) be a Noetherian local ring of prime characteristic \(p >
  0\).
  Then, for every ideal \(\fa \subseteq R\) of analytic spread \(h\), there
  exists an integer \(t \ge 0\) such that for all non-negative integers \(e\)
  and \(\ell\), we have
  \begin{equation}\label{eq:pigeonhole}
    \fa^{\ell p^e + h(p^e-1) + 1 + t} \subseteq (\fa^{\ell+1})^{[p^e]} \subseteq
    \fa^{(\ell+1)p^e}.
  \end{equation}
  In particular, if \(\fa = \fm\), then \eqref{eq:pigeonhole} holds for \(h =
  \dim(R)\).
\end{lemma}
\begin{proof}
  The second inclusion in \eqref{eq:pigeonhole} holds by definition of Frobenius
  powers of ideals.
  It therefore suffices to show the first inclusion in \eqref{eq:pigeonhole}.
  \par We first reduce to the case when the residue field \(k\) is infinite.
  Consider the ring extension
  \[
    R \hooklongrightarrow R[x]_{\fm R[x]} \eqqcolon S
  \]
  as in \cite[\S8.4]{SH06}.
  Then, \(S\) is a Noetherian local ring of characteristic \(p > 0\), \(R
  \hookrightarrow S\) is faithfully flat, and \(S/\fm S \cong k(x)\) is infinite.
  Since we can check the inclusions in \eqref{eq:pigeonhole} after a faithfully
  flat extension and since analytic spread does not change when passing to \(S\)
  \cite[Lemma 8.4.2(4)]{SH06}, we can replace \(R\) with \(S\) to assume that
  \(k\) is infinite.
  \par It remains to prove the first inclusion in \eqref{eq:pigeonhole} when \(k\) is
  infinite.
  Since \(k\) is infinite, there exists a minimal reduction \(\fq \subseteq
  \fa\) generated by \(h\) elements \cite[Proposition 8.3.7]{SH06}.
  Thus, by definition \cite[Definition 1.2.1]{SH06}, there exists an integer \(t
  > 0\) such that \(\fa^{s+t} = \fq^s \fa^t\) for every integer \(s \ge 0\).
  Setting \(s = \ell p^e+h(p^e-1)+1\), we have
  \begin{align*}
    \fa^{\ell p^e + h(p^e-1) + 1 + t} &=
    \fq^{\ell p^e + h(p^e-1) + 1} \cdot \fa^t\\
    &\subseteq (\fq^{\ell+1})^{[p^e]} \cdot \fa^t\\
    &\subseteq (\fa^{\ell+1})^{[p^e]}
  \end{align*}
  for all non-negative integers \(e\) and \(\ell\), where the inclusion in the
  second line holds by the pigeonhole principle \cite[Lemma 2.4\((a)\)]{HH02}.
  The special case for \(\fa = \fm\) follows from \cite[Corollary 8.3.9]{SH06}.
\end{proof}
We can now prove Proposition \ref{prop:frobseshcomp}.
\begin{proof}[Proof of Proposition \ref{prop:frobseshcomp}]
  Since both \(\varepsilon_{\jet}(\lVert \,\cdot\, \rVert;x)\) and
  \(\varepsilon_F^\ell(\,\cdot\,;x)\) are homogeneous, it suffices to consider
  the case when \(L\) is a \(\ZZ\)-invertible sheaf.
  By Lemma \ref{lem:pigeonhole}, there exists an integer \(t \ge 0\) such that
  \begin{equation}\label{eq:frobseshcompideals}
    \fm_x^{\ell p^e + n(p^e-1) + 1 + t} \subseteq (\fm_x^{\ell+1})^{[p^e]} \subseteq
    \fm_x^{(\ell+1)p^e}
  \end{equation}
  for all non-negative integers \(e\) and \(\ell\).
  The right inclusion in \eqref{eq:frobseshcompideals} implies
  \begin{align*}
    (\ell+1)\bigl(p^{s^\ell_F(mL;x)} - 1\bigr) &\le
    (\ell+1)\,p^{s^\ell_F(mL;x)} - 1 \le s(mL;x)
  \intertext{and hence the right inequality in \eqref{eq:frobseshcomp}
  follows after dividing by \(m\) throughout and taking limit suprema as \(m
  \to \infty\).
  See Definition \ref{def:demaillys} for the definition of \(s(mL;x)\).
  \endgraf For the left inequality in \eqref{eq:frobseshcomp}, let \(\delta > 0\) be
  given, and let \(m_0\) be a positive integer such that}
    \frac{s(m_0L;x)}{m_0} &> \varepsilon_{\jet}\bigl(\lVert L \rVert;x\bigr) - \frac{\ell + n}{\ell + 1} \cdot
    \delta.
  \end{align*}
  For every non-negative integer \(e\), set
  \begin{align*}
    d_e \coloneqq \biggl\lceil \frac{\ell p^e+n(p^e-1)+t}{s(m_0L;x)}
    \biggr\rceil
    &= \biggl\lceil \frac{(\ell+n)p^e-n+t}{s(m_0L;x)} \biggr\rceil.
    \intertext{By the superadditivity property of \(s(\,\cdot\,;x)\)
    \eqref{eq:superadditivity},
    we have}
    s(m_0d_e L;x) \ge d_e \cdot s(m_0L;x) &\ge \ell p^e + n(p^e-1)+t.
    \intertext{By the left inclusion in \eqref{eq:frobseshcompideals}, this
    inequality implies}
    s_F^\ell(m_0d_eL;x) &\ge e,
  \end{align*}
  and hence
  \begin{align*}
    \varepsilon_F^\ell(L;x)
    &\ge \frac{p^{s^\ell_F(m_0d_eL;x)}-1}{m_0d_e/(\ell+1)}\\
    &\ge \frac{(\ell+1)(p^e-1)}{m_0\bigl\lceil\bigl(
    (\ell+n)p^e-n+t\bigr)/s(m_0L;x)\bigr\rceil}.
  \end{align*}
  As \(e \to \infty\), the right-hand side converges to
  \[
    \frac{\ell+1}{\ell+n} \cdot \frac{s(m_0L;x)}{m_0} > \frac{\ell+1}{\ell+n}
    \cdot \varepsilon_{\jet}\bigl(\lVert L \rVert;x\bigr) - \delta,
  \]
  and hence we have the inequality
  \[
    \varepsilon^\ell_F(L;x) > \frac{\ell+1}{\ell+n} \cdot \varepsilon_{\jet}\bigl(\lVert L \rVert;x\bigr) -
    \delta
  \]
  for all \(\delta > 0\).
  Since \(\delta > 0\) was arbitrary, we obtain the left inequality in
  \eqref{eq:frobseshcomp}.
\end{proof}
\subsection{Generating jets using Frobenius--Seshadri constants}
We can now state the following theorem, which says that lower bounds on
Frobenius--Seshadri constants imply adjoint-type bundles separate points and/or
generate jets.
When \(X\) is smooth, the case \(\ell = 0\) over algebraically closed fields
is due to Musta\c{t}\u{a} and Schwede \cite{MS14}, which Das and Waldron
\cite{DW22} noted holds over all \(F\)-finite fields.
Again when \(X\) is smooth over an algebraically closed field,
the case \(\ell > 0\) and \(r = 1\) is due to the author of this paper \cite{Mur18}.
The \(F\)-injective case over algebraically closed fields  when \(\ell > 0\) and
\(r = 1\) was remarked in \cite{Mur18}.
\par The result below generalizes all of these results in \cite{MS14,Mur18,DW22}
in three directions:
\begin{enumerate*}[label=\((\arabic*)\)]
  \item the points \(x_i\) are only assumed to be \(F\)-injective,
  \item we can control generation of jets at multiple points simultaneously, and
  \item the jets we control can be of higher order.
\end{enumerate*}
\begin{theorem}[cf.\ {\citeleft\citen{MS14}\citemid Theorem 3.1\((i)\)\citepunct
  \citen{Mur18}\citemid Theorem C and Remark 3.3\citepunct \citen{DW22}\citemid
  Theorem A.13\citeright}]\label{thm:seshgenjets}
  Let \(X\) be a projective variety of dimension \(n\) over a
  field \(k\) of characteristic \(p > 0\).
  Let \(Z = \{x_1,x_2,\ldots,x_r\}\) be closed points in \(X\) such that \(X\)
  is \(F\)-injective at each \(x_i\) and
  let \(L_1,L_2,\ldots,L_r\) be \(\QQ\)-Cartier divisors on \(X\) such that
  \[
    L \coloneqq L_1 + L_2 + \cdots + L_r
  \]
  is \(\ZZ\)-Cartier and such that
  \(\Bplus(L_i) \cap Z = \emptyset\) for all \(i\).
  Let \(\ell_1,\ell_2,\ldots,\ell_r\) be non-negative integers.
  If \(\varepsilon^{\ell_i-1}_F(L_i;x_i) > \ell_i\) for every \(i\),
  then the restriction map
  \begin{alignat*}{3}
    H^0\bigl(X,\omega_X \otimes_{\cO_X} \cO_X(L)\bigr) &&{}\longrightarrow{}&
    H^0\biggl(X,\omega_X \otimes_{\cO_X} \cO_X(L) \otimes_{\cO_X}
    \cO_X\bigg/\prod_{i=1}^r\fm_{x_i}^{\ell_i}\biggr)
  \intertext{is surjective.
  Moreover, if \(k\) is \(F\)-finite, then the composition}
    S^0\bigl(X,\omega_X \otimes_{\cO_X} \cO_X(L)\bigr) &&{}\hooklongrightarrow{}&
    H^0\bigl(X,\omega_X \otimes_{\cO_X} \cO_X(L)\bigr)\\
    &&{}\longrightarrow{}&
    H^0\biggl(X,\omega_X \otimes_{\cO_X} \cO_X(L) \otimes_{\cO_X}
    \cO_X\bigg/\prod_{i=1}^r\fm_{x_i}^{\ell_i}\biggr)
  \end{alignat*}
  is surjective.
\end{theorem}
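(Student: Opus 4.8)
The plan is to mimic the argument of Mustaţă--Schwede and the author's earlier work, replacing smoothness with the $F$-injectivity hypothesis, and to handle the several points $x_i$ by a single global vanishing/lifting argument. First I would fix an auxiliary integer $e$ large enough that, for each $i$, we have $\varepsilon_F^{\ell_i-1}(L;x_i) > \ell_i r$ realized at level $e$: concretely, by the definition of the Frobenius--Seshadri constant (as a limit supremum at points where it is positive), for $m$ large and $e = s_F^{\ell_i-1}(mL;x_i)$ the quantity $(p^e-1)/(m/\ell_i)$ exceeds $\ell_i r$, so after rescaling $L$ by $m$ and using a common $e$ for all $i$ (take the minimum, or pass to a common refinement), $mL$ generates $p^e$-Frobenius $(\ell_i-1)$-jets at $x_i$, i.e.\ the map $H^0(X,\cO_X(mL)) \to H^0(X,\cO_X(mL)\otimes\cO_X/(\fm_{x_i}^{\ell_i})^{[p^e]})$ is surjective, and moreover $p^e - 1 > \ell_i r \cdot (m/\ell_i) \cdot (\text{something})$ giving the crucial numerical slack $p^e > r\cdot(\text{the multiplicity datum})$ needed below. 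I would carry this bookkeeping so that the strict inequality $\varepsilon_F^{\ell_i-1}(L;x_i) > \ell_i r$ becomes, at the chosen level $e$, the inequality $(\fm_{x_i}^{\ell_i})^{[p^e]}$ is "small enough" relative to $p^e$.

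The heart of the argument is a Frobenius-pushforward and trace computation. Consider the $e$-th iterated Frobenius $F^e\colon X \to X$ and the trace map $\Phi^e(L)\colon F^e_*\bigl(\omega_X \otimes \cO_X(p^e L)\bigr) \to \omega_X\otimes\cO_X(L)$ from \S\ref{sect:prelim}. The key point of $F$-injectivity at $x_i$ is that $\Phi^e$ is surjective on stalks at $x_i$ (this is essentially the definition of $F$-injectivity, dualizing the injectivity of Frobenius on local cohomology), and more precisely that $\Phi^e$ surjects onto $\omega_X\otimes\cO_X(L)\otimes \cO_X/\fm_{x_i}^{\ell_i}$ after twisting, because $(\fm_{x_i}^{\ell_i})^{[p^e]} \subseteq \fm_{x_i}^{\ell_i \cdot p^e}$ pulls back appropriately under $F^e$ — here one uses $F^{e,*}\fm_{x_i}^{\ell_i} \supseteq (\fm_{x_i}^{\ell_i})^{[p^e]}$, or rather that $F^e_*$ of the ideal sheaf $(\fm_{x_i}^{\ell_i})^{[p^e]}$ corresponds to $\fm_{x_i}^{\ell_i}$. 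I would then set up the commutative square relating (i) the restriction of $H^0(X,\omega_X\otimes\cO_X(p^eL))$ to $\prod_i (\fm_{x_i}^{\ell_i})^{[p^e]}$, which is surjective because $p^eL$ generates the relevant Frobenius jets at each $x_i$ and the points are distinct — this is where one invokes a Serre-vanishing or a direct Čech argument to get surjectivity at $r$ points simultaneously from surjectivity at each, the "$r$" in the hypothesis $\varepsilon > \ell_i r$ compensating for the loss — and (ii) the trace map $\Phi^e$, which by $F$-injectivity carries this onto the desired restriction $H^0(X,\omega_X\otimes\cO_X(L)) \to H^0(X,\omega_X\otimes\cO_X(L)\otimes\cO_X/\prod_i \fm_{x_i}^{\ell_i})$. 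Chasing the square gives surjectivity of the restriction map.

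For the final assertion about $S^0$, I would observe that the image of the composition $S^0(X,\omega_X\otimes\cO_X(L)) \hookrightarrow H^0(X,\omega_X\otimes\cO_X(L)) \to H^0(X,\omega_X\otimes\cO_X(L)\otimes\cO_X/\prod_i\fm_{x_i}^{\ell_i})$ contains the image of $H^0(X,\Phi^e(L))$ restricted, by the very construction above — the sections we produced in the previous step \emph{are} images under the trace of sections of $\omega_X\otimes\cO_X(p^eL)$, hence lie in each stage of the defining intersection for $S^0$ once we check compatibility of traces under iteration (guaranteed by \cite[Chapter III, Proposition 6.6(1)]{Har66}, recalled in \S\ref{sect:prelim}). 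Since the descending intersection stabilizes, running the argument at one sufficiently large $e$ and checking that the produced lifts survive all further Frobenius pullbacks shows the composition through $S^0$ is already surjective. The main obstacle I anticipate is the "several points" bookkeeping in the Frobenius-jet surjectivity at level $e$: extracting from $r$ separate strict inequalities $\varepsilon_F^{\ell_i-1}(L;x_i) > \ell_i r$ a \emph{single} $e$ and a \emph{single} twist $mL$ for which $mL$ generates $p^e$-Frobenius $(\ell_i-1)$-jets simultaneously at all $x_i$, together with ensuring the ideals $(\fm_{x_i}^{\ell_i})^{[p^e]}$ at distinct points impose independent conditions (so that global sections surject onto $\cO_X/\prod_i(\fm_{x_i}^{\ell_i})^{[p^e]}$ and not merely onto each factor) — this is where the factor $r$ in the hypothesis is spent, and getting the inequalities to line up cleanly is the delicate part.
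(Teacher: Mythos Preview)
Your skeleton is correct and mirrors the paper's strategy: produce simultaneous \(p^e\)-Frobenius jet generation for some large twist, then push down along the trace \(\Phi^e(L)\) using \(F\)-injectivity at the \(x_i\), and observe that the sections produced lie in \(S^0\). But the step you flag as ``the delicate part'' is a genuine gap, not just bookkeeping, and your proposed fix (``Serre-vanishing or a direct \v{C}ech argument'') will not work as stated.

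The problem is twofold. First, \(L\) is not assumed ample; the only hypothesis is \(\Bplus(L)\cap Z=\emptyset\), so Serre vanishing for \(H^1\) of the relevant ideal sheaf twist is unavailable. Second, even granting surjectivity of each individual restriction
\(H^0(X,\cO_X(mL))\to H^0(X,\cO_X(mL)\otimes\cO_X/(\fm_{x_i}^{\ell_i})^{[p^e]})\),
this does \emph{not} by itself give surjectivity onto the product
\(\cO_X/\prod_i(\fm_{x_i}^{\ell_i})^{[p^e]}\cong\bigoplus_i\cO_X/(\fm_{x_i}^{\ell_i})^{[p^e]}\):
you must be able to hit one summand while killing the others, and that requires sections vanishing to the prescribed Frobenius order at \(x_i\) yet nonvanishing at each \(x_j\) for \(j\neq i\). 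The paper isolates this as a separate proposition (Proposition~\ref{prop:ms216}). The mechanism is: pass to a birational model realising \(L\equiv_\QQ A+E\) with \(A\) ample and the \(x_j\) avoiding \(E\), then use the individual Frobenius-jet surjectivity together with Castelnuovo--Mumford regularity to show that
\((\fm_{x_i}^{\ell_i})^{[p^e]}\otimes\cO_X((m+2m_0)L)\)
is globally generated at every \(x_j\). This produces the sections \(s_i\) vanishing at \(x_i\) and nonvanishing at \(Z\setminus\{x_i\}\); tensoring \(s_1\otimes\cdots\otimes H^0\otimes\cdots\otimes s_r\) then hits the \(i\)-th summand. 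The factor \(r\) in the hypothesis enters precisely because this construction multiplies the twist by \(r\).

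Two further omissions: you never pass from surjectivity for \(\cO_X(mL)\) to surjectivity for \(\omega_X\otimes\cO_X(p^eL)\). The paper arranges \(p^e-1-mr\) arbitrarily large so that \(\omega_X\otimes\cO_X((p^e-mr)L)\) is globally generated at the \(x_i\), then applies \cite[Lemma~2.7]{Mur18}. And for the first conclusion (no \(F\)-finiteness on \(k\)), the paper reduces to the \(F\)-finite case via the gamma construction \cite{HH94,Mur21}; without this you cannot form \(\Phi^e\) as a map of coherent sheaves.
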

We start with the following preliminary result, which is a version of
\cite[Proposition 2.16 and Lemma 3.3]{MS14} for generating higher order jets at multiple
points.
\begin{proposition}[cf.\ {\cite[Proposition 2.16 and Lemma 3.3]{MS14}}]\label{prop:ms216}
  Let \(X\) be a projective variety of dimension \(n\) over a
  field \(k\) of characteristic \(p > 0\).
  Let \(Z = \{x_1,x_2,\ldots,x_r\}\) be closed points in \(X\) and
  let \(L_1,L_2,\ldots,L_r\) be \(\QQ\)-Cartier divisors on \(X\) such that
  \[
    L \coloneqq L_1 + L_2 + \cdots + L_r
  \]
  is \(\ZZ\)-Cartier and such that
  \(\Bplus(L_i) \cap Z = \emptyset\) for all \(i\).
  Let \(\ell_1,\ell_2,\ldots,\ell_r\) be non-negative integers.
  Fix a real number \(\alpha > 0\).
  If
  \[
    \varepsilon^{\ell_i-1}_F(L_i;x_i) > \ell_i\alpha
  \]
  for every \(i\),
  then there exist positive integers \(e\) and \(m\) such that
  \[
    \frac{p^e-1}{m} > \alpha
  \]
  and the restriction map
  \[
    H^0\bigl(X,\cO_X(mL)\bigr)
    \longrightarrow
    H^0\biggl(X,\cO_X(mL) \otimes_{\cO_X} \cO_X\bigg/
    \prod_{i=1}^r(\fm_{x_i}^{\ell_i})^{[p^e]}\biggr)
  \]
  is surjective.
  Moreover, we may assume that \(p^e-1 - m\alpha\) is arbitrarily large.
\end{proposition}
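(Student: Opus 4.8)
The plan is to follow the proof of \cite[Proposition 2.16]{MS14}, adapting it so as to control higher‑order jets at several points simultaneously. First I would discard every index $i$ with $\ell_i=0$: for such an index $(\fm_{x_i}^{0})^{[p^e]}=\cO_X$ and $\varepsilon_F^{-1}(L_i;x_i)=+\infty$, so both the hypothesis and the target ignore it. If no indices survive, the restriction map is an isomorphism onto $0$, and any $e,m$ with $(p^e-1)/m>\alpha$ works (with $p^e-1-m\alpha$ arbitrarily large, e.g.\ take $e$ large and $m=1$); so I may assume $\ell_i\ge1$ for all $i$.

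Next I would extract per‑point data. Fix $i$. Since $\varepsilon_F^{\ell_i-1}(L_i;x_i)>\ell_i\alpha>0$, this Frobenius--Seshadri constant is a \emph{limit} supremum of the ratios $\ell_i\bigl(p^{s_F^{\ell_i-1}(mL_i;x_i)}-1\bigr)/m$ as $m\to\infty$ by \cite[Proposition~2.6(iii)]{MS14} and \cite[Proposition~2.5(iv)]{Mur18}. Hence I can choose $m_i$ as large as I like with $e_i:=s_F^{\ell_i-1}(m_iL_i;x_i)$ satisfying $(p^{e_i}-1)/m_i>\alpha$, and then $p^{e_i}-1-m_i\alpha=m_i\bigl((p^{e_i}-1)/m_i-\alpha\bigr)$ is automatically as large as I please, because $(p^{e_i}-1)/m_i$ stays bounded below by $\varepsilon_F^{\ell_i-1}(L_i;x_i)/\ell_i>\alpha$. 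By construction $m_iL_i$ generates $p^{e_i}$‑Frobenius $(\ell_i-1)$‑jets at $x_i$.

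Then I would align the exponents. Applying the $g$‑th Frobenius to the surjection witnessing that $m_iL_i$ generates $p^{e_i}$‑Frobenius $(\ell_i-1)$‑jets at $x_i$ --- using that $F^g$ is finite, hence exact, and that $(\fm_{x_i}^{\ell_i})^{[p^{e_i+g}]}=F^{g*}\bigl((\fm_{x_i}^{\ell_i})^{[p^{e_i}]}\bigr)\cdot\cO_X$ --- shows that $p^gm_iL_i$ generates $p^{e_i+g}$‑Frobenius $(\ell_i-1)$‑jets at $x_i$, and this replacement does not decrease the pertinent ratio. Combined with the elementary observation that adding to $m_iL_i$ a divisor that is globally generated at $x_i$ preserves generation of Frobenius jets at $x_i$ (multiply by a section nonvanishing at $x_i$; here $x_i\notin\Bplus(L_j)$ makes multiples of the other $L_j$ available), this reduces everything to a common exponent $e$ and, for each $i$, sections realizing any prescribed element of $\cO_X/(\fm_{x_i}^{\ell_i})^{[p^e]}$ lying in $H^0$ of suitable subdivisors of $mL$.

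The hard part will be assembling these into a single section of $\cO_X(mL)$ realizing prescribed jets at all of $Z$ at once while keeping $(p^e-1)/m>\alpha$. I would use two inputs. The algebraic one: a $p^e$‑th power of a germ in $\fm_{x_j}^{\ell_j}$ lies in $(\fm_{x_j}^{\ell_j})^{[p^e]}$, and in characteristic $p$ one can produce sections with Frobenius‑compatible leading terms --- for instance $(\sum_k y_k^{\ell_j})^{p^e}=\sum_k y_k^{\ell_j p^e}$ for $y_k$ generating $\fm_{x_j}$ --- so that landing in $(\fm_{x_j}^{\ell_j})^{[p^e]}$ costs only $\asymp\ell_jp^e$ units of positivity at $x_j$, rather than the $\asymp(\ell_j+n-1)p^e$ that Lemma~\ref{lem:pigeonhole} alone would demand. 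The positivity one: $\Bplus(L_i)\cap Z=\emptyset$ makes each $L_i$ big and locally ample along $Z$, so $\operatorname{vol}(L_i)^{1/n}\ge\varepsilon(\lVert L_i\rVert;x_i)\ge\varepsilon_F^{\ell_i-1}(L_i;x_i)>\ell_i\alpha$ by Proposition~\ref{prop:frobseshcomp} and Theorem~\ref{thm:sesheqgenjets}, whence $\operatorname{vol}(L)^{1/n}=\operatorname{vol}(\sum_iL_i)^{1/n}\ge\sum_i\operatorname{vol}(L_i)^{1/n}>\alpha\sum_i\ell_i$ by superadditivity of $\operatorname{vol}(-)^{1/n}$ on big classes. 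This volume slack --- strict precisely because the Frobenius--Seshadri hypotheses are strict --- is what lets me pick a single $m$ in the window where all the jet‑realizing and $p^e$‑th‑power vanishing pieces fit inside $mL=m\sum_iL_i$ while $(p^e-1)/m$ still exceeds $\alpha$; taking $e$ (hence $p^e$) large, which the limit‑supremum freedom in the per‑point step permits, widens this window enough to contain an integer $m$ and simultaneously makes $p^e-1-m\alpha$ arbitrarily large. I expect the most delicate point to be exactly this calibration, together with checking that sections on $X$ with the required Frobenius‑power leading terms --- vanishing at the prescribed $x_j$ but not at $x_i$ --- genuinely exist.
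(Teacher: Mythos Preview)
Your proposal has the right overall shape, but there are two genuine gaps.

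First, the exponent-alignment step is incorrect as stated. You claim that if $m_iL_i$ generates $p^{e_i}$-Frobenius $(\ell_i-1)$-jets at $x_i$, then $p^gm_iL_i$ generates $p^{e_i+g}$-Frobenius $(\ell_i-1)$-jets at $x_i$, arguing via exactness of $F^g$. This fails: pulling back by $F^g$ only tells you that $p^g$-th powers of the original sections land in $\cO_{x_i}/(\fm_{x_i}^{\ell_i})^{[p^{e_i+g}]}$, and these do not span. Concretely, on $\mathbf{P}^2$ in characteristic $3$ with $x$ a closed point, $L=\cO(1)$, $\ell=1$, $e=0$, $m=1$: the map $H^0(\cO(1))\to k$ is surjective, but $H^0(\cO(3))\to k[t_1,t_2]/(t_1^3,t_2^3)$ is not, since the class of $t_1^2t_2^2$ cannot be hit by a cubic. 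The paper instead uses the replacement $e_i\mapsto te_i$, $m_i\mapsto m_i(p^{te_i}-1)/(p^{e_i}-1)$ from \cite[Lemma~2.4]{Mur18}, which \emph{does} preserve Frobenius-jet generation (via products $s_{j_0}s_{j_1}^{p^{e_i}}\cdots s_{j_{t-1}}^{p^{(t-1)e_i}}$) while keeping the ratio $(p^{e_i}-1)/m_i$ unchanged.

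Second, and more seriously, the assembly step is not a proof. You correctly identify that the hard part is producing sections of $\cO_X(mL)$ lying in $(\fm_{x_j}^{\ell_j})^{[p^e]}$ at each $x_j$ while not vanishing at the other $x_i$, but neither of your proposed inputs delivers this. The volume-superadditivity bound is a red herring: it gives asymptotic section counts, not sections with prescribed local behavior along $Z$. The $p^e$-th-power idea produces one element of $(\fm_{x_j}^{\ell_j})^{[p^e]}$ from each section vanishing to order $\ell_j$ at $x_j$, but you still need such a section that does not vanish at the other $x_i$, and you give no mechanism for this nor for the degree accounting. The paper handles this by a Castelnuovo--Mumford regularity argument: passing to a birational model on which $f_i^*L_i\equiv_{\QQ} A_i+E_i$ with $A_i$ ample and $Z$ disjoint from the image of $\Supp(E_i)$, the surjection onto Frobenius jets at $x_i$ together with Serre vanishing for $A_i$ shows that $(\fm_{x_i}^{\ell_i})^{[p^e]}\otimes\cO_X((m+2m_0)L_i)$ is $0$-regular with respect to $m_0A_i$, hence globally generated at every $x_j$. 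This directly furnishes the sections $s_i$ needed for the Chinese-remainder assembly, and the extra $2m_0$ is then absorbed by a final application of the \cite[Lemma~2.4]{Mur18} replacement as $t\to\infty$.
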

\begin{proof}
  We claim we can base change to a purely transcendental extension
  \(k(t)\) to assume that \(k\) is infinite.
  Under such a base change, \(X \otimes_k k(t)\) is a localization
  of \(X \times_k \bA^1_k\), and is therefore a variety.
  The formation of augmented base loci is compatible with ground field
  extensions \cite[\S2.7]{Bir17}.
  It remains to show that generation of \(p^e\)-Frobenius \(\ell\)-jets, and
  therefore Frobenius--Seshadri constants, are compatible with the ground
  field extension \(k \subseteq k(t)\).
  Let \(x \in X\) be a closed point with residue field \(k'\).
  Then, \(\fm_x \cdot \cO_{X \otimes_k k(t)}\) is a maximal ideal because
  \[
    \frac{\cO_{X,x} \otimes_k k(t)}{\fm_x \cdot ( \cO_{X,x} \otimes_k k(t))}
    \cong k' \otimes_k k(t) \cong k'(t).
  \]
  By flat base change, we conclude that generation of \(p^e\)-Frobenius
  \(\ell\)-jets, and therefore Frobenius--Seshadri constants, are compatible
  with the ground field extension \(k \subseteq k(t)\).\medskip
  \par We now prove the proposition when \(k\) is infinite.
  Let \(m_0\) be such that for all \(m' \ge m_0\) such that \(m'L_i\) is
  \(\ZZ\)-Cartier, the sheaves
  \(\cO_X(m'L_i)\) are globally generated away from \(\Bplus(L_i)\) for all
  \(i\).
  Such an integer \(m_0\) exists by Theorem \ref{prop:kur13prop27}.
  For every \(i\) we can find \(m_i\) sufficiently large and divisible and \(e_i\)
  sufficiently large such that \(m_iL_i\) generates
  \(p^{e_i}\)-Frobenius \((\ell_i-1)\)-jets at \(x_i\) and
  \begin{align}
    \frac{p^{e_i}-1}{m_i/\ell_i} &> \ell_i\alpha.\nonumber
    \intertext{Dividing both sides by \(\ell_i\), this inequality is equivalent
    to}
    \frac{p^{e_i}-1}{m_i} &> \alpha.\label{eq:lialpha}
  \end{align}
  By \cite[Lemma 2.4]{Mur18}, we may make the replacements
  \begin{equation}\label{eq:mur24replacement}
    e_i \longmapsto te_i \qquad \text{and} \qquad m_i \longmapsto
    \frac{m_i(p^{te_i}-1)}{p^{e_i}-1}
  \end{equation}
  without changing the inequality \eqref{eq:lialpha} or
  the fact that \(m_iL_i\) generates
  \(p^{e_i}\)-Frobenius \((\ell_i-1)\)-jets at \(x_i\).
  We may therefore assume that the \(e_i\) are all equal to a single value \(e\)
  and that \(m_i \ge
  m_0\) for all \(i\).\medskip
  \par Set \(m \coloneqq \operatorname{lcm}\{m_i\}\).
  We proceed in a sequence of steps.
  \begin{step}\label{step:ms2161}
    After possibly applying the replacements \eqref{eq:mur24replacement}, 
    the sheaves
    \[
      (\fm_{x_i}^{\ell_i})^{[p^{e}]} \otimes \cO_X\bigl((m+2m_0)L_i\bigr)
    \]
    are globally generated at \(x_j\) for every \(j\).
  \end{step}
  For each \(i\), since \(x_j \notin \Bplus(L_i)\) for all \(j\), we can write
  \[
    L_i \sim_\QQ A_{i}+E_{i}
  \]
  for ample \(\QQ\)-Cartier divisors \(A_{i}\) and effective
  \(\QQ\)-Cartier divisors \(E_{i}\) such that \(x_j \notin
  \Supp(E_i)\) for all \(j\).
  After replacing \(m_0\) and then \(m_i\) by multiples using
  \eqref{eq:mur24replacement}, we may assume the following:
  \begin{itemize}
    \item \(m_0L_i\), \(m_iL_i\), \(m_0 A_{i}\), \(m_i A_{i}\), \(m_0E_{i}\),
      and \(m_iE_{i}\) are \(\ZZ\)-Cartier divisors.
    \item \(m_i \ge nm_0\) where \(n \coloneqq \dim(X)\).
    \item \(\cO_{X}(m_0A_{i})\) and \(\cO_{X}(m_iA_{i})\)
      are globally generated.
    \item \(H^q(X,\cO_{X}((m+(2-q)m_0)A_{i})) = 0\) for all \(q \ge 1\).
  \end{itemize}
  Since \(\cO_X((m+m_0-m_i)L_i)\) is globally generated at every \(x_j\),
  \cite[Lemma
  2.7]{Mur18} implies that \((m+m_0)L_i\) generates \(p^{e}\)-Frobenius
  \((\ell_i-1)\)-jets at \(x_i\).
  \par Now consider the commutative diagram
  \[
    \begin{tikzcd}
      0 \dar & 0 \dar\\
      H^0\Bigl(X,\cO_{X}\bigl((m+m_0) A_{i}\bigr)\Bigr) \rar \dar
      & H^0\Bigl(X,\cO_{X}\bigl((m+m_0) A_{i}\bigr) \otimes_{\cO_X}
      \cO_{X}/(\fm_{x_i}^{\ell_i})^{[p^e]}\Bigr) \dar{1_{(m+m_0)E_i}}\\
      H^0\Bigl(X,\cO_{X}\bigl((m+m_0)L_i\bigr)\Bigr)
      \rar[twoheadrightarrow] \dar
      & H^0\Bigl(X,\cO_{X}\bigl((m+m_0)L_i\bigr) \otimes_{\cO_X}
      \cO_{X}/(\fm_{x_i}^{\ell_i})^{[p^e]}\Bigr) \dar\\
      H^0\Bigl(m_iE_{i},\cO_{m_iE_{i}}\bigl((m+m_0)
      L_i\rvert_{m_iE_{i}}\bigr)\Bigr)
      \dar
      & 0\\
      0
    \end{tikzcd}
  \]
  with exact columns, where we use the notation \(1_{(m+m_0)E_i}\) from
  \cite[\href{https://stacks.math.columbia.edu/tag/01WX}{Tag
  01WX}]{stacks-project}.
  Since the middle horizontal map is surjective, the top horizontal map must
  also be surjective.
  \par We claim that the sheaves
  \[
    (\fm_{x_i}^{\ell_i})^{[p^{e}]} \otimes_{\cO_X} \cO_{X_i}\bigl((m+2m_0)A_i\bigr)
  \]
  are \(0\)-regular with respect to \(m_0A_i\).
  Consider the long exact sequence on cohomology for the short exact
  sequence
  \[
    \begin{tikzcd}
      0 \dar\\
      (\fm_{x_i}^{\ell_i})^{[p^e]} \otimes_{\cO_X}
      \cO_{X}\bigl(\bigl(m+(2-q)m_0\bigr) A_{i}\bigr) \dar\\
      \cO_{X}\bigl(\bigl(m+(2-q)m_0\bigr) A_{i}\bigr) \dar\\
      \cO_{X}\bigl(\bigl(m+(2-q)m_0\bigr) A_{i}\bigr) \otimes_{\cO_X}
      \cO_X/\fm_{x_i}^{\ell_i})^{[p^e]} \dar\\
      0\mathrlap{.}
    \end{tikzcd}
  \]
  When \(q = 1\),
  the commutative diagram in the previous paragraph shows that this short exact
  sequence remains exact after applying \(H^0(X,\,\cdot\,)\).
  Thus, we have
  \[
    H^1\Bigl(X,(\fm_{x_i}^{\ell_i})^{[p^e]} \otimes_{\cO_X}
    \cO_{X}\bigl(\bigl(m+m_0\bigr) A_{i}\bigr)\Bigr) = 0.
  \]
  When \(q \ge 2\), we have
  \[
    H^q\bigl(X,\cO_{X}\bigl(\bigl(m+(2-q)m_0\bigr) A_{i}\bigr)\bigr) = 0
  \]
  by our choice of \(m\) and \(m_0\) and
  \[
    H^q\bigl(X,\cO_{X}\bigl(\bigl(m+(2-q)m_0\bigr) A_{i}\bigr) \otimes_{\cO_X}
    \cO_X/\fm_{x_i}^{\ell_i})^{[p^e]}\bigr) = 0
  \]
  since the sheaf has \(0\)-dimensional support.
  Thus, the sheaves
  \[
    (\fm_{x_i}^{\ell_i})^{[p^{e}]} \otimes_{\cO_X} \cO_{X_i}\bigl((m+2m_0)A_i\bigr)
  \]
  are \(0\)-regular with respect to \(m_0A_i\), and hence are
  globally generated
  by \cite[Chapter II, \S1, Proposition 1\((iii)\)]{Kle66}
  applied to \(X_{\bar{k}}\).
  Twisting by \((m+2m_0)E_i\), we see that
  \[
    (\fm_{x_i}^{\ell_i})^{[p^{e}]} \otimes_{\cO_X} \cO_X\bigl((m+2m_0)L_i\bigr)
  \]
  is globally generated at \(x_j\) for every \(j\).\medskip
  \begin{step}
    Conclusion of proof.
  \end{step}
  By Step \ref{step:ms2161} and the assumption that \(k\) is infinite,
  we can find sections
  \[
    s_i \in H^0\bigl(X,(\fm_{x_i}^{\ell_i})^{[p^{e}]} \otimes
    \cO_X\bigl((m+2m_0)L_i\bigr)\bigr)
  \]
  that do not vanish at any points in \(Z - \{x_i\}\).
  Here, we use the assumption that \(k\) is infinite to ensure that we can take
  general linear combinatons of sections that vanish at some of the points in
  \(Z - \{x_i\}\).
  Applying \cite[Lemma 2.7]{Mur18} again, we know that \((m+2m_0)L_i\) separates
  \(p^e\)-Frobenius \((\ell_i-1)\)-jets at \(x_i\), and hence the restriction
  maps
  \begin{equation}
    H^0\bigl(X,\cO_X\bigl((m+2m_0)L_i\bigr)\bigr) \longrightarrow
    H^0\bigl(X,\cO_X\bigl((m+2m_0)L_i\bigr) \otimes_{\cO_X}
    \cO_X\big/(\fm_{x_i}^{\ell_i})^{[p^e]}\bigr)\label{eq:eachxilijets}
  \end{equation}
  are surjective.
  The composition
  \begin{align}
    \bigotimes_{i=1}^r
    H^0\bigl(X,{}&\cO_X\bigl((m+2m_0)L_i\bigr)\bigr)\nonumber\\
    &\longrightarrow
    H^0\bigl(X,\cO_X\bigl((m+2m_0)L\bigr)\bigr)\label{eq:eachxilijetscomp}\\
    &\longrightarrow
    H^0\biggl(X,\cO_X\bigl((m+2m_0)L\bigr) \otimes_{\cO_X} \cO_X\bigg/
    \prod_{i=1}^r(\fm_{x_i}^{\ell_i})^{[p^e]}\biggr)\nonumber
  \end{align}
  is surjective: we have the natural isomorphism
  \begin{align*}
    H^0\biggl(X,\cO_X\bigl((m&+2m_0)L\bigr) \otimes_{\cO_X} \cO_X\bigg/
    \prod_i(\fm_{x_i}^{\ell_i})^{[p^e]}\biggr)\\
    &\cong
    \bigoplus_{i=1}^r H^0\bigl(X,\cO_X\bigl((m+2m_0)L\bigr) \otimes_{\cO_X} \cO_X\big/
    (\fm_{x_i}^{\ell_i})^{[p^e]}\bigr)
  \end{align*}
  by the Chinese remainder theorem,
  and the surjectivity of \eqref{eq:eachxilijets} implies that
  the \(i\)-th direct summand of this direct sum is the image of
  \[
    s_1 \otimes \cdots \otimes H^0\bigl(X,\cO_X\bigl((m+2m_0)L_i\bigr)\bigr)
    \otimes \cdots \otimes s_r.
  \]
  Because of the factorization \eqref{eq:eachxilijetscomp}, we have the
  surjection
  \[
    H^0\bigl(X,\cO_X\bigl((m+2m_0)L\bigr)\bigr)
    \longtwoheadrightarrow
    H^0\biggl(X,\cO_X\bigl((m+2m_0)L\bigr) \otimes_{\cO_X} \cO_X\bigg/
    \prod_{i=1}^r(\fm_{x_i}^{\ell_i})^{[p^e]}\biggr).
  \]
  By the proof of \cite[Lemma 2.4]{Mur18}, we see that this surjectivity is
  preserved under the replacements
  \[
    e \longmapsto te \qquad \text{and} \qquad m \longmapsto
    \frac{m(p^{te}-1)}{p^{e}-1}.
  \]
  Under these replacements, the ratio \((p^e-1)/(m+2m_0)\) transforms as follows:
  \[
    \frac{p^e-1}{m+2m_0} \longmapsto
    \frac{p^{te}-1}{\dfrac{m(p^{te}-1)}{p^{e}-1}+2m_0}.
  \]
  As \(t \to \infty\), this ratio approaches \((p^e-1)/m > \alpha\) as required.
  For the ``moreover'' statement, it suffices to note that
  \[
    p^{te}-1 - \biggl(\dfrac{m(p^{te}-1)}{p^{e}-1}+2m_0\biggr)\alpha
    \longrightarrow
    \infty
  \]
  as \(t \to \infty\).
\end{proof}
We can now prove Theorem \ref{thm:seshgenjets}.
\begin{proof}[Proof of Theorem \ref{thm:seshgenjets}]
  We first reduce to the case when \(k\) is \(F\)-finite.
  By the gamma construction of Hochster and Huneke \cite[\S6]{HH94}, our
  results on the gamma construction \cite[Theorem A]{Mur21} (see also
  \cite[Remark 5.8]{DM24}), and \cite[Lemma
  6.13\((b)\)]{HH94}, there exists a
  purely inseparable field extension \(k \subseteq k^\Gamma\) such that
  \(k^\Gamma\) is \(F\)-finite,
  the projection morphism \(X \otimes_k
  k^\Gamma \to X\) preserves reduced and \(F\)-injective loci, and the
  ideals \(\fm_{x_i} \cdot \cO_{X \otimes_k k^\Gamma}\) are maximal.
  By flat base change, generation of \(p^e\)-Frobenius \(\ell\)-jets and
  the Frobenius--Seshadri constants at the \(x_i\) do not
  change after extending the ground field to \(k^\Gamma\).
  The compatibility of \(\omega_X\) with field
  extensions \cite[Chapter V, Corollary 3.4\((a)\)]{Har66} therefore implies that
  it suffices to prove Theorem \ref{thm:seshgenjets} in the special case
  when \(k\) is \(F\)-finite.\medskip
  \par We now assume that \(k\) is \(F\)-finite.
  By Proposition \ref{prop:ms216},
  we can find \(m\) and \(e\) such that \(p^e-1 > m\) and the restriction map
  \begin{align}
    H^0\bigl(X,\cO_X(mL)\bigr)
    &\longrightarrow
    H^0\biggl(X,\cO_X(mL) \otimes_{\cO_X} \cO_X\bigg/
    \prod_{i=1}^r(\fm_{x_i}^{\ell_i})^{[p^e]}\biggr)\nonumber
    \intertext{is surjective.
    Moreover, we may assume that \(p^e-1-m\) is arbitrarily
    large, in which case \(\omega_X \otimes_{\cO_X} \cO_X((p^e-m)L)\) is globally
    generated at the \(x_i\) by Theorem \ref{prop:kur13prop27}, and}
      S^0\bigl(X,\omega \otimes_{\cO_X} \cO_X(L)\bigr) = \im\Bigl(&H^0\bigl(X,\omega_X
      \otimes_{\cO_X} \cO_X(p^eL)\bigr) \longrightarrow H^0\bigl(X,\omega_X \otimes_{\cO_X}
      \cO_X(L)\bigr)\Bigr)
      \label{eq:s0isimage}
    \intertext{by the fact that \(H^0(X,\omega_X \otimes \cO_X(L))\) is a
    finite-dimensional \(k\)-vector space \cite[p.\ 75]{Sch14}.
    By \cite[Lemma 2.7]{Mur18}, we then see that
    the restriction map}
    H^0\bigl(X,\omega_X \otimes_{\cO_X} \cO_X(p^eL)\bigr)
    &\longrightarrow
    H^0\biggl(X,\omega_X \otimes_{\cO_X} \cO_X(p^eL) \otimes_{\cO_X} \cO_X\bigg/
    \prod_{i=1}^r(\fm_{x_i}^{\ell_i})^{[p^e]}\biggr)\label{eq:h0pesurj}
  \end{align}
  is surjective.
  \par Next, we consider the commutative diagram
  \[
    \begin{tikzcd}
      0 \dar & 0 \dar\\
      F^e_*\biggl(\displaystyle
      \prod_{i=1}^r(\fm_{x_i}^{\ell_i})^{[p^e]} \otimes_{\cO_X} \omega_X \otimes_{\cO_X}
      \cO_X(p^eL) \biggr) \rar \dar
      & \displaystyle \prod_{i=1}^r \fm_{x_i}^{\ell_i} \otimes_{\cO_X} \omega_X \otimes_{\cO_X}
      \cO_X(L) \dar\\
      F^e_*\bigl(\omega_X \otimes_{\cO_X}
      \cO_X(p^eL) \bigr) \rar \dar
      & \omega_X \otimes_{\cO_X} \cO_X(L) \dar\\
      F^e_*\biggl(\omega_X \otimes_{\cO_X} \cO_X(p^eL) \otimes_{\cO_X}
      \cO_X\bigg/\displaystyle
      \prod_{i=1}^r(\fm_{x_i}^{\ell_i})^{[p^e]}\biggr) \rar[twoheadrightarrow] \dar
      & \omega_X \otimes_{\cO_X} \cO_X(L) \otimes_{\cO_X}
      \cO_X\bigg/\displaystyle\prod_{i=1}^r\fm_{x_i}^{\ell_i} \dar\\
      0 & 0
    \end{tikzcd}
  \]
  where the middle horizontal map is surjective at the \(x_i\) by the
  \(F\)-injectivity of \(X\) at the \(x_i\) (see \cite[Lemma A.1]{Mur21}), which
  implies the bottom horizontal map is surjective.
  Taking global sections in the bottom square, we obtain the commutative square
  \[
    \setbox0=\hbox\bgroup\ignorespaces
    \begin{tikzcd}
      H^0\bigl(X,\omega_X \otimes_{\cO_X} \cO_X(p^eL)\bigr) \rar \dar[twoheadrightarrow]
      & H^0\bigl(X,\omega_X \otimes_{\cO_X} \cO_X(L)\bigr) \dar\\
      H^0\biggl(X,\omega_X \otimes_{\cO_X} \cO_X(p^eL) \otimes_{\cO_X}
      \cO_X\bigg/\displaystyle
      \prod_{i=1}^r(\fm_{x_i}^{\ell_i})^{[p^e]}\biggr) \rar[twoheadrightarrow]
      & H^0\biggl(X,\omega_X \otimes_{\cO_X} \cO_X(L) \otimes_{\cO_X}
      \cO_X\bigg/\displaystyle\prod_{i=1}^r\fm_{x_i}^{\ell_i}\biggr)
    \end{tikzcd}
    \unskip\egroup\noindent\makebox[\textwidth]{\box0}
  \]
  where the left vertical map is surjective by \eqref{eq:h0pesurj} and the
  bottom horizontal map is surjective by the fact that the sheaves involved have
  \(0\)-dimensional support.
  Since the image of the top horizontal map is \(S^0(X,\omega_X \otimes_{\cO_X}
  \cO_X(L))\) by \eqref{eq:s0isimage}, the composition
  \begin{align*}
    S^0\bigl(X,\omega_X \otimes_{\cO_X} \cO_X(L)\bigr) \hooklongrightarrow{}&
    H^0\bigl(X,\omega_X \otimes_{\cO_X} \cO_X(L)\bigr)\\
    \longrightarrow{}&
    H^0\biggl(X,\omega_X \otimes_{\cO_X} \cO_X(L) \otimes_{\cO_X}
    \cO_X\bigg/\prod_{i=1}^r\fm_{x_i}^{\ell_i}\biggr)
  \end{align*}
  is surjective.
\end{proof}
We obtain the following version of \cite[Theorem 3.1]{MS14} for simultaneous
generation of higher order jets at finitely many closed points as a consequence.
Musta\c{t}\u{a} and Schwede \cite{MS14} prove results for global generation
\((\ref{cor:ms31gg})\), very bigness \((\ref{cor:ms31vbig})\), and
very ampleness \((\ref{cor:ms31va})\), but not \(\ell\)-jet ampleness
\((\ref{cor:ms31jetample})\).
See \cite[p.\ 195]{EKL95} for the definition of ``very big,'' which is also
known as ``birationally very ample.''
\begin{corollary}[cf.\ {\cite[Theorem 3.1]{MS14}}]\label{cor:ms31}
  Let \(X\) be a projective variety of dimension \(n\) over a
  field \(k\) of characteristic \(p > 0\)
  and let \(L\) be a Cartier divisor on \(X\).
  \begin{enumerate}[label=\((\roman*)\),ref=\roman*]
    \item\label{cor:ms31gg}
      If \(X\) is \(F\)-injective, \(L\) is ample,
      and \(\varepsilon^0_F(L;x) > 1\) for every \(x \in
      X\), then \(\omega_X \otimes_{\cO_X} \cO_X(L)\) is globally generated.
      If \(k\) is \(F\)-finite, then \(\omega_X \otimes_{\cO_X} \cO_X(L)\) is
      globally generated by
      \(S^0(X,\omega_X \otimes_{\cO_X}
      \cO_X(L))\).
    \item\label{cor:ms31vbig}
      Suppose that \(k\) is algebraically closed.
      If \(\varepsilon^0_F(L;x) > 2\) for some \(F\)-injective \(x \in X\) such
      that \(x \notin \Bplus(L)\), then
      \[
        \Bigl\lvert S^0\bigl(X,\omega_X \otimes_{\cO_X}
        \cO_X(L)\bigr)\Bigr\rvert
      \]
      is a very big linear system.
    \item\label{cor:ms31va}
      If \(X\) is \(F\)-injective, \(L\) is ample, and
      \(\varepsilon^0_F(L;x) > 2\) for every \(x \in X\), then
      \(K_X+L\) is \(1\)-jet ample.
      If \(k\) is algebraically closed, then
      \[
        \Bigl\lvert S^0\bigl(X,\omega_X \otimes_{\cO_X}
        \cO_X(L)\bigr)\Bigr\rvert
      \]
      is a \(1\)-jet ample linear system.
    \item\label{cor:ms31jetample}
      Fix an integer \(\ell > 0\).
      Let \(Z = \{x_1,x_2,\ldots,x_r\}\) be closed points in \(X\) such that
      \(X\) is \(F\)-injective at each \(x_i\) and suppose that \(\Bplus(L) \cap
      Z = \emptyset\) for all \(i\).
      Suppose that for every \(r\)-tuple of non-negative integers
      \((\ell_1,\ell_2,\ldots,\ell_r)\) such that
      \[
        \ell+1 = \sum_{i=1}^r \ell_i,
      \]
      we have \(\varepsilon^{\ell_i-1}_F(L;x_i) > \ell+1\) for every
      \(i\).
      Then,
      \[
        \Bigl\lvert S^0\bigl(X,\omega_X \otimes_{\cO_X}
        \cO_X(L)\bigr)\Bigr\rvert
      \]
      is an \(\ell\)-jet ample linear system at
      \(Z\).
    \item\label{cor:ms31jetampleglobal}
      Fix an integer \(\ell > 0\).
      If the hypothesis in \((\ref{cor:ms31jetample})\) holds for every finite
      set of points \(Z\), then \(K_X+L\) is \(\ell\)-jet ample.
      If \(k\) is \(F\)-finite, then
      \[
        \Bigl\lvert S^0\bigl(X,\omega_X \otimes_{\cO_X}
        \cO_X(L)\bigr)\Bigr\rvert
      \]
      is an \(\ell\)-jet
      ample linear system.
  \end{enumerate}
\end{corollary}
\begin{proof}
  First, \((\ref{cor:ms31gg})\) and
  \((\ref{cor:ms31jetampleglobal})\) immediately follow
  from Theorem \ref{thm:seshgenjets}.
  Next, \((\ref{cor:ms31jetample})\) follows from Theorem \ref{thm:seshgenjets} by
  setting
  \[
    L_i \coloneqq \frac{\ell_i}{\ell+1}L
  \]
  for every \(i\) since
  Frobenius--Seshadri constants are homogeneous with respect to multiples
  \citeleft\citen{MS14}\citemid Proposition 2.8\citepunct
  \citen{Mur18}\citemid Proposition 2.8\citeright\ (the assumption that \(L\)
  is ample in
  \cite{MS14,Mur18} is only used to ensure that \(\varepsilon_F^\ell(L;x) > 0\)).
  Finally,
  \((\ref{cor:ms31vbig})\) and \((\ref{cor:ms31va})\) follow from combining
  \cite[Lemma 3.3]{MS14} and the proof of Theorem \ref{thm:seshgenjets}.
\end{proof}
\subsection{Generating jets using Seshadri constants}
Finally, we can now prove Theorem \ref{thm:introseshjetample}, which we recall
generalizes the results in \cite{Dem92,MS14,Mur18} which connect Seshadri
constants to the local positivity of adjoint-type divisors.
\begin{customthm}{A}\label{thm:introseshjetample}
  Let \(X\) be a normal projective variety of dimension \(n\) over an arbitrary
  field \(k\) and let \(L\) be a Cartier divisor on \(X\).
  Let \(\ell\) be a non-negative integer.
      Let \(Z = \{x_1,x_2,\ldots,x_r\}\) be closed points in \(X\) such that \(X\)
      is of dense \(F\)-injective type (if \(\Char(k) = 0\)) or \(F\)-injective (if
      \(\Char(k) = p > 0\)) at every \(x_i\).
      Suppose that
      \[
        \varepsilon\bigl(\lVert L \rVert;x_i\bigr) > n+\ell
      \]
      for every \(i\).
      Then, \(\omega_X \otimes_{\cO_X}
      \cO_X(L)\) is \(\ell\)-jet ample at \(Z\).
  Moreover, if \(k\) is an \(F\)-finite field of characteristic \(p > 0\),
  then the linear system
  \[
    \Bigl\lvert S^0\bigl(X,\omega_X \otimes_{\cO_X} \cO_X(L)\bigr) \Bigr\rvert
  \]
  is \(\ell\)-jet ample at \(Z\).
\end{customthm}
\begin{proof}
  By reduction modulo \(p\) (see the proof of \cite[Corollary 3.15]{SZ20}
  or \cite[\S7.3.2]{Mur19} for details),
  it suffices to show the case when \(\Char(k) = p > 0\).
  By Theorem \ref{thm:seshjet}, we hve
  \begin{align*}
    \varepsilon\bigl(\lVert L \rVert;x_i\bigr) =
    \varepsilon_{\jet}\bigl(\lVert L \rVert;x_i\bigr) &> n+\ell
    \intertext{for all \(i\), and hence \(\Bplus(L) \cap Z = \emptyset\) by Remark
    \ref{rem:bplusandmovsesh}.
    By Proposition \ref{prop:frobseshcomp}, we have}
    \varepsilon_F^{\ell_i-1}(L;x_i) \ge \frac{\ell_i}{\ell_i-1+n} \cdot
    \varepsilon_{\jet}\bigl(\lVert L \rVert;x_i\bigr) &> 
    \ell_i \cdot \frac{n+\ell}{\ell_i-1+n} \ge \ell_i
  \end{align*}
  for all \(i\).
  Thus, Theorem \ref{thm:introseshjetample} follows from
  Theorem \ref{thm:seshgenjets} and Corollary \ref{cor:ms31}.
\end{proof}

\section{Lower bounds for Seshadri constants
of adjoint-type divisors}\label{sect:lowerbounds}
In this section, we prove lower bounds for Seshadri constants of adjoint-type
divisors in two situations:
\begin{enumerate*}[label=\((\arabic*)\)]
  \item for surfaces in arbitrary characteristic, and
  \item in arbitrary dimension over the complex numbers.
\end{enumerate*}
\subsection{Lower bounds for surfaces in arbitrary characteristic}
To show lower bounds for surfaces in arbitrary characteristic, we prove
the following result on possible values for Seshadri constants of adjoint-type
divisors.
This result was stated and proved for smooth complex surfaces by
Bauer and Szemberg \cite{BS11}.
\begin{theorem}[cf.\ {\cite[Theorem 4.1]{BS11}}]\label{thm:bslowerbound}
  Let \(X\) be a Gorenstein projective geometrically irreducible surface
  over a field \(k\) and
  let \(L\) be a nef Cartier divisor such that \(K_X+L\) is nef and big.
  Consider a regular \(k\)-rational point \(x \in X\).
  If \(\varepsilon(K_X+L;x) \in (0,1)\), then
  \[
    \varepsilon(K_X+L;x) = \frac{m-1}{m}
  \]
  for some integer \(m \ge 2\).
\end{theorem}
\begin{proof}
  We verify that the proof in \cite{BS11} for smooth complex surfaces
  works at this level of generality with some modifications.\medskip
  \begin{step}\label{step:bslowerbound1}
    For every \(\delta > 0\), there exists an integer \(m \ge 2\) such that
    \[
      \frac{m-1}{m} - \delta \le \varepsilon(K_X+L;x) \le \frac{m-1}{m}.
    \]
  \end{step}
  By Proposition \ref{prop:laz519},
  there exists an integral closed curve \(C \subseteq X\)
  such that \(x \in C\) and
  \[
    \frac{d}{m} - \delta \le \varepsilon(K_X+L;x) \le \frac{(K_X+L)\cdot C}{\mult_x(C)} =
    \frac{d}{m} \in (0,1)
  \]
  for integers \(d \coloneqq (K_X+L)\cdot C\) and \(m \coloneqq \mult_x(C)\)
  where \(d \le m-1\) and \(m \ge 2\).
  By the Hodge index theorem \cite[Theorem B.27]{Kle05} and the assumption that
  \(K_X+L\) is nef and big, we have
  \[
    d^2 = \bigl( (K_X+L) \cdot C \bigr)^2 \ge C^2\cdot(K_X+L)^2 \ge C^2.
  \]
  The hypothesis that \(L\) is nef and the adjunction formula for Gorenstein
  projective surfaces \cite[Proposition B.26]{Kle05} then imply
  \begin{align*}
    p_a(C) ={}& 1 + \frac{1}{2}\,C^2 + \frac{1}{2}\, C \cdot K_X\\
    \le{}& 1 + \frac{1}{2}\,d^2 + \frac{1}{2}\,C \cdot (K_X+L)\\
    ={}& 1 + \frac{d(d+1)}{2}.
  \intertext{\endgraf Next, we bound \(p_a(C)\) from below.
  Let \(\nu\colon C' \to C\) be the normalization map.
  By \cite[Proposition 7.5.4]{Liu02}, we have}
  p_a(C) ={}& p_a(C') + \sum_{z \in C} [k(z) : k]\,\delta_z
  \intertext{where denoting by \(\cO_{C,z}'\) the integral closure of \(\cO_{C,z}\), we
  have}
    \delta_z \coloneqq{}& {\len_{\cO_{C,z}}\bigl(\cO_{C,z}'/\cO_{C,z}\bigr)}\\
    ={}& \frac{1}{[k(z) : k]}\,\dim_k\bigl(\cO_{C,z}'/\cO_{C,z}\bigr).
    \intertext{By \cite[Theorem 1]{Hir57} and its proof, since \(x\) is a
    regular \(k\)-rational point on
    \(X\) such that \(C\) is of multiplicity \(m \ge 2\) at \(x\), we have}
    \delta_x ={}& \binom{m}{2} = \frac{m(m-1)}{2} \ge 1.
  \end{align*}
  Thus, we have
  \begin{align}
    \frac{m(m-1)}{2} &\le p_a(C) \le 1 + \frac{d(d+1)}{2}.\nonumber
    \intertext{Combining the two bounds for \(p_a(C)\) and the inequality \(d \le
    m-1\) obtained from the assumption that \(\varepsilon(K_X+L;x) \le d/m \in
    (0,1)\), we see that}
    \frac{m(m-1)}{2} \le p_a(C) &\le 1 + \frac{d(d+1)}{2} \le 1 + \frac{m(m-1)}{2}.\nonumber
  \intertext{Since \(m \ge 2\), the only integer solution \(d\) for this chain of
  inequalities is \(d = m-1\).
  We therefore conclude that}
    \frac{m-1}{m} - \delta &\le \varepsilon(K_X+L;x) \le \frac{m-1}{m}.\medskip
  \end{align}
  \setcounter{step}{1}
  \begin{step}
    Conclusion of proof.
  \end{step}
  For every integer \(n \ge 2\), choose real numbers \(\delta_n\) such that
  \[
    \delta_n \in \biggl(0, \frac{1}{n(n-1)}\biggr).
  \]
  For all distinct integers \(m,m' \ge 2\) and integers \(n \ge m\) and \(n' \ge
  m'\), we then have
  \begin{equation}\label{eq:segnointersect}
    \biggl[\frac{m-1}{m} - \delta_n,\frac{m-1}{m}\biggr] \cap
    \biggl[\frac{m'-1}{m'} - \delta_{n'},\frac{m'-1}{m'}\biggr] = \emptyset.
  \end{equation}
  \par Now by Step \ref{step:bslowerbound1}, for each integer \(n \ge 2\), we
  can choose an integer \(m_n\) such that
  \begin{equation}
    \frac{m_n-1}{m_n} - \delta_n \le \varepsilon(K_X+L;x) \le
    \frac{m_n-1}{m_n}.\label{eq:miineq}
  \end{equation}
  By \eqref{eq:segnointersect}, since \(\varepsilon(K_X+L;x)\) is fixed,
  we must have \(m_n > n\) for all \(n\).
  Passing to a subsequence of the \(\delta_n\), we obtain a infinite strictly
  increasing sequence of integers \(m_n\) such that \eqref{eq:miineq} holds.
  Taking limits as \(n \to \infty\) in \eqref{eq:miineq}, we see that 
  we see that \(\varepsilon(K_X+L;x) = (m-1)/m\).
\end{proof}
As a consequence, we obtain the following lower bound for Seshadri constants of
adjoint-type divisors.
This solves Conjecture \ref{conj:seshadjlower} for surfaces.
As mentioned in \S\ref{subsect:ourapproach}, the complex case is due to Bauer and
Szemberg \cite[Corollary 4.2]{BS11}.
The lower bound below on \(\varepsilon(K_X+L;x)\)
is sharp by \cite[Examples 4.3 and 4.4]{BS11}.
\begin{customthm}{B}\label{cor:seshlowerboundadj}
  Let \(X\) be a smooth projective surface over an algebraically closed field and
  let \(L\) be a nef divisor on \(X\) such that \(K_X+L\) is ample.
  Then,
  \[
    \varepsilon(K_X+L;x) \ge \frac{1}{2}
  \]
  for every closed point \(x \in X\).
  In particular, for any ample divisor \(L\) on \(X\), we have
  \[
    \varepsilon(K_X+4L;x) \ge \frac{1}{2}.
  \]
\end{customthm}
\begin{proof}
  The first statement follows from Theorem \ref{thm:bslowerbound}.
  The second statement follows from
  Mori's cone theorem \cite[Theorem
  1.5]{Mor82} (see \cite[Theorem 3.1\((i)\)]{MT81}), which says that if \(L\) is
  an ample divisor on a smooth projective variety \(X\) of dimension \(n\)
  over an algebraically closed field, then \(K_X+(n+2)L\) is
  ample.\footnote{According to
  \cite[p.\ 231n]{KM83}, this consequence of
  Mori's cone theorem was pointed out to Matsusaka by Mabuchi
  and Tsunoda during a lecture series given by Matsusaka at Osaka University in
  1981.
  The reference \cite{MT81} is Mabuchi and Tsunoda's account of these
  lectures.}
\end{proof}

\subsection{Lower bounds in all dimensions over the complex numbers}
Let \(X\) be a smooth projective variety of dimension \(n\)
and let \(L\) be a nef divisor on \(X\)
such that \(K_X+L\) is ample.
In \cite[Theorem 3.2 and Remark 3.3]{BS11}, Bauer and Szemberg show that 
\[
  \varepsilon(K_X+L;x) \ge
  \max\Biggl\{\frac{2}{n^2+n+4},
  \frac{1}{\bigl(e+\frac{1}{2}\bigr) n^{\frac{4}{3}}+
  \frac{1}{2} n^{\frac{2}{3}}+2}\Biggr\}
\]
using the results toward Fujita's freeness conjecture due to Angehrn and Siu
\cite{AS95} and to Heier \cite{Hei02}.
We use the known cases of Fujita's conjecture
\cite{Rei88,EL93a,Fuj93,Kaw97,YZ20}
and recent progress due to Ghidelli and Lacini \cite{GL24}
to improve this bound.
\begin{customthm}{C}\label{thm:lowerboundsoverc}
  Let \(X\) be a smooth complex projective variety of dimension \(n\).
  Let \(L\) be a nef divisor on \(X\) such that \(K_X+L\) is ample.
  For every closed point \(x \in X\), we have
  \[
    \varepsilon(K_X+L;x) \ge \begin{cases}
      \hfil \dfrac{1}{n+2} & \text{when}\ n \le 5,\\[1em]
      \hfil \dfrac{1}{9} & \text{when}\ n = 6,\\[1em]
      \dfrac{1}{\Bigl\lfloor
      n\Bigl(\log\bigl(\log(n)\bigr)+2.34\Bigr)
      \Bigr\rfloor + 2}
      & \text{for arbitrary}\ n.
    \end{cases}
  \]
\end{customthm}
\begin{proof}
  We claim that
  \[
    m(K_X+L) = K_X + (m-1)(K_X+L) + L
  \]
  is free whenever
  \begin{equation}\label{eq:ineqform}
    m \ge \begin{cases}
      \hfil n+2 & \text{when}\ n \le 5,\\
      \hfil 9 & \text{when}\ n = 6,\\
      \Bigl\lfloor
      n\Bigl(\log\bigl(\log(n)\bigr)+2.34\Bigr)
      \Bigr\rfloor + 2
      & \text{for arbitrary}\ n.
    \end{cases}
  \end{equation}
  Set \(A_m \coloneqq (m-1)(K_X+L) + L\), which is ample for every \(m \ge 2\).
  We claim that \(K_X+A_m = m(K_X+L)\) is free for all \(m\) satisfying
  the inequality \eqref{eq:ineqform}.
  \par We first consider the case when \(n \le 5\).
  We have
  \[
    A_m^d \cdot Z
    \ge (m-1)^d \bigl((K_X+L)^d \cdot Z\bigr)
    \ge (n+1)^d
  \]
  for every subvariety \(Z \subseteq X\) of dimension \(d\).
  By \cite[Theorem 1\((i)\)]{Rei88} (for \(n = 2\)), \cite[Theorems 3.1 and
  4.1]{Kaw97} (for \(n \in \{3,4\}\)), and \cite[Main Theorem]{YZ20} (for \(n =
  5\)), we conclude that \(K_X+A_m = m(K_X+L)\) is free for every \(m \ge n+2\).
  \par Next, we consider the case when \(n \ge 6\).
  We have
  \[
    \biggl( \frac{1}{m-1} A_m \biggr)^d \cdot Z \ge (K_X+L)^d \cdot Z \ge 1
  \]
  for every subvariety \(Z \subseteq X\) of dimension \(d\).
  By \cite[Lemma 5.4, Theorem 5.5, and Proof of Theorem 5.5]{GL24},
  we see that \(K_X+A_m\) is free when \(m \ge 9\) and \(n = 6\).
  By \cite[Theorem 4.9, Theorem 5.1, and Proof of Theorem 5.2]{GL24}, we see
  that \(K_X+A_m\) is free when
  \[
    m > n\Bigl(\log\bigl(\log(n)\bigr)+2.34\Bigr) + 1
  \]
  and \(n\) is arbitrary, and in particular, when
  \[
    m \ge \Bigl\lfloor n\Bigl(\log\bigl(\log(n)\bigr)+2.34\Bigr) \Bigr\rfloor +
    2.
  \]
  \par Finally, 
  since Seshadri constants of ample free Cartier divisors are always at least
  \(1\) \cite[Example 5.1.18]{Laz04a}, we see that
  \[
    m \cdot \varepsilon(K_X+L;x) = \varepsilon\bigl(m(K_X+L);x\bigr) \ge 1
  \]
  for all \(m\) satisfying
  the inequality \eqref{eq:ineqform}.
  Dividing by \(m\), we are done.
\end{proof}

\section{Effective global generation, very
ampleness, and jet
ampleness}\label{sect:effective}
In this section, we prove Theorem \ref{thm:smooths0ljetample}.
We also prove prove Conjecture \ref{conj:fujitavariant} for smooth complex
projective varieties (Theorem \ref{thm:fujitavariantcomplex}).
\subsection{Effective bounds for smooth surfaces in arbitrary characteristic}
We are now ready to prove the following stronger version of Theorem
\ref{thm:smooths0ljetample}.
\begin{theorem}\label{thm:smooths0ljetamplewithan}
  Let \(X\) be a smooth projective surface over an algebraically closed field
  \(k\).
  Fix an integer \(\ell \ge 0\).
  For all ample divisors \(L\) and \(A\), the linear system
  \[
    \bigl\lvert K_X+2(2+\ell)(K_X+4L)+A\bigr\rvert
    = \bigl\lvert K_X+(4+2\ell)K_X+8(2+\ell)L+A\bigr\rvert
  \]
  is \(\ell\)-jet ample.
  If \(\Char(k) = p > 0\), then in fact, the linear system
  \[
    \biggl\lvert S^0\Bigl(X,\omega_X \otimes
    \cO_X\bigl(2(2+\ell)(K_X+4L)+A\bigr)\Bigr) \biggr\rvert
  \]
  is \(\ell\)-jet ample.
\end{theorem}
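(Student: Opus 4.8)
The plan is to derive Theorem~\ref{thm:smooths0ljetamplewithan} from the three inputs assembled above: Mori's cone theorem, the Bauer--Szemberg-type lower bound of Corollary~\ref{cor:seshlowerboundadj}, and the Frobenius--Seshadri machinery of \S\ref{sect:frobsesh} (Theorem~\ref{thm:introseshjetample}, and behind it Theorem~\ref{thm:seshgenjets}, Corollary~\ref{cor:ms31}, Proposition~\ref{prop:ms216}, and the comparison Proposition~\ref{prop:frobseshcomp}). Write \(M \coloneqq K_X+4L\). The non-\(S^0\) assertion follows in positive characteristic from the \(S^0\) assertion since \(S^0 \subseteq H^0\), and in characteristic zero from the positive-characteristic case by reduction modulo \(p\) (as in the proof of Theorem~\ref{thm:introseshjetample}), so it suffices to prove the \(S^0\) statement over an \(F\)-finite field of characteristic \(p>0\).

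First I would check that \(M\) is ample. By Mori's cone theorem for smooth projective surfaces \textnormal{(}valid in every characteristic\textnormal{)}, every \(K_X\)-negative extremal ray of \(\overline{NE}(X)\) is spanned by a rational curve \(C\) with \(0 < -K_X\cdot C \le 3\); since \(L\) is ample and Cartier, \(L\cdot C \ge 1\), so \((K_X+3L)\cdot C \ge 0\), while on the \(K_X\)-nonnegative part of the cone this is immediate. Hence \(K_X+3L\) is nef and \(M = (K_X+3L)+L\) is ample. Then Corollary~\ref{cor:seshlowerboundadj} \textnormal{(}applied with the nef divisor \(4L\)\textnormal{)} gives \(\varepsilon(M;x) \ge \tfrac12\) for every closed point \(x\in X\), with the refinement from Theorem~\ref{thm:bslowerbound} that a value in \((0,1)\) has the form \((m-1)/m\). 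By superadditivity of Seshadri constants of nef divisors and positivity of \(\varepsilon(A;x)\), every divisor \(cM+A\) satisfies \(\varepsilon(cM+A;x) \ge \tfrac c2 + \varepsilon(A;x) > \tfrac c2\), and Proposition~\ref{prop:frobseshcomp} turns this into lower bounds on the Frobenius--Seshadri constants \(\varepsilon_F^{j}(cM+A;x)\).

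Now set \(D \coloneqq 2(2+\ell)M+A\); I must show \(\omega_X\otimes\cO_X(D)\) is \(\ell\)-jet ample and that \(S^0\) already gives this. Since \(\cO_X/\fm_x^{0}=0\), it suffices to treat a finite set \(Z=\{x_1,\dots,x_r\}\) of \textnormal{(}regular, hence \(F\)-injective\textnormal{)} closed points together with positive integers \(\ell_1,\dots,\ell_r\) with \(\sum_i\ell_i=\ell+1\), so \(r\le\ell+1\). For each such configuration I would write \(2(2+\ell)=\bigl(\sum_i 2\ell_i\bigr)+2\) and decompose \(D=L_1+\dots+L_r\) accordingly, with \(L_i\) a multiple of \(M\) proportional to \(\ell_i\) together with a fixed ample overhead absorbing the leftover \(2M+A\); the bounds of the previous paragraph and Proposition~\ref{prop:frobseshcomp} then make \(\varepsilon_F^{\ell_i-1}(L_i;x_i)\) large enough to run the per-point argument of Proposition~\ref{prop:ms216} and the trace-of-Frobenius/\(F\)-injective lifting from the proof of Theorem~\ref{thm:seshgenjets}. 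This yields the simultaneous surjectivity of \(S^0(X,\omega_X\otimes\cO_X(D)) \subseteq H^0(X,\omega_X\otimes\cO_X(D))\) onto \(H^0\bigl(X,\omega_X\otimes\cO_X(D)\otimes\cO_X/\prod_i\fm_{x_i}^{\ell_i}\bigr)\), i.e.\ \(\ell\)-jet ampleness at \(Z\); taking \(A=L\) recovers Theorem~\ref{thm:smooths0ljetample}.

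I expect the delicate point to be the numerology in this last step rather than the global structure. A single uniform application of Theorem~\ref{thm:introseshjetample}(iii) to \(D\) would force a lower bound on \(\varepsilon(\lVert D\rVert;x)\) quadratic in \(\ell\) — the cost of separating up to \(\ell+1\) points at once — whereas the coefficient \(2(2+\ell)\) is linear; the resolution is exactly the configuration-by-configuration, point-by-point splitting, which keeps the total cost \(\sum_i(\mathrm{const}\cdot\ell_i)\) linear because \(\sum_i\ell_i=\ell+1\) and never uses more than \(\ell+1\) points simultaneously. Pinning the constant to \(2(2+\ell)\), rather than to a larger linear function of \(\ell\), is where the sharpness of \(\varepsilon(M;x)\ge\tfrac12\) and the dichotomy of Theorem~\ref{thm:bslowerbound} must be used, with the borderline values \(\varepsilon(M;x)=(m-1)/m\) treated by a more direct analysis involving the curve that computes the Seshadri constant. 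Once the \(H^0\)-surjectivity is established, the \(S^0\) refinement is automatic from the trace-of-Frobenius part of the machinery.
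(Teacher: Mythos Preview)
Your opening matches the paper exactly: Mori's cone theorem makes \(M\coloneqq K_X+4L\) ample, and Corollary~\ref{cor:seshlowerboundadj} gives \(\varepsilon(M;x)\ge\tfrac12\) at every closed point. From there the paper's argument is one line: it records \(\varepsilon\bigl(2(2+\ell)M+A;x\bigr)>2(2+\ell)\cdot\varepsilon(M;x)\ge 2+\ell\) and invokes Theorem~\ref{thm:introseshjetample}. There is no per-configuration splitting of \(D\), no separate treatment of Seshadri-computing curves, and no independent reduction modulo \(p\) for the characteristic-zero statement, since Theorem~\ref{thm:introseshjetample} already covers both characteristics.

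Your worry about the numerology is legitimate: Theorem~\ref{thm:introseshjetample}\((\ref{thm:introseshjetamplealongz})\) literally asks for \(\varepsilon>r(n+\ell)\), which with \(r\) ranging up to \(\ell+1\) is quadratic in \(\ell\), while the bound on hand is only \(2+\ell\). But the remedy you sketch does not resolve this. Running Proposition~\ref{prop:ms216} with \(\alpha=1\) on a decomposition \(D=\sum_iL_i\) requires \(\varepsilon_F^{\ell_i-1}(L_i;x_i)>\ell_i\), hence via Proposition~\ref{prop:frobseshcomp} (with \(n=2\)) the ordinary bound \(\varepsilon(L_i;x_i)>\ell_i+1\). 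A summand of size \(2\ell_iM\) contributes only \(\varepsilon\ge\ell_i\) when \(\varepsilon(M;x_i)=\tfrac12\), so each of the \(r\) pieces is short by more than \(1\), and the leftover \(2M+A\) cannot absorb an \(r\)-fold deficit once \(r\ge2\). Your fallback---a ``more direct analysis'' of the curve realizing \(\varepsilon(M;x)=(m-1)/m\)---would therefore have to carry essentially the whole multi-point argument, and you have not indicated what it is. You have correctly located the delicate point, but the splitting as written does not pin the constant down to \(2(2+\ell)\).
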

\begin{proof}
  Thus, Theorem \ref{cor:seshlowerboundadj} implies \(\varepsilon(K_X+4L;x)
  \ge 1/2\) for every \(x \in X\).
  We then see that
  \[
    \varepsilon\bigl( 2(2+\ell)(K_X+4L) + A;x\bigr)
    > 2(2+\ell)\,\varepsilon(K_X+4L;x) \ge 2+\ell.
  \]
  The conclusion now follows from Theorem
  \ref{thm:introseshjetample}.
\end{proof}
\begin{customthm}{D}\label{thm:smooths0ljetample}
  Let \(X\) be a smooth projective surface over an algebraically closed field
  \(k\) and let \(L\) be an ample divisor on \(X\).
  For every integer \(\ell \ge 0\), the linear system
  \[
    \bigl\lvert K_X+(4+2\ell)K_X+(17+8\ell)L \bigr\rvert
  \]
  is \(\ell\)-jet ample.
  If \(\Char(k) = p > 0\), then in fact, the linear system
  \[
    \biggl\lvert S^0\Bigl(X,\omega_X \otimes_{\cO_X}
    \cO_X\bigl((4+2\ell)K_X+(17+8\ell)L\bigr)\Bigr) \biggr\rvert
  \]
  is \(\ell\)-jet ample.
\end{customthm}
\begin{proof}
  Set \(A = L\) in Theorem \ref{thm:smooths0ljetamplewithan}.
\end{proof}
\subsection{Effective bounds for smooth complex projective varieties}
Finally, we prove Conjecture \ref{conj:fujitavariant} over the complex numbers.
\begin{customthm}{E}\label{thm:fujitavariantcomplex}
  Let \(X\) be a smooth complex projective variety of dimension \(n\).
  Fix an integer \(\ell \ge 0\).
  For all ample divisors \(L\) and \(A\) and every integer
  \[
    m \ge \begin{cases}
      \hfil n+2 & \text{when}\ n \le 5,\\
      \hfil 8 & \text{when}\ n = 6,\\
      n\Bigl(\log\bigl(\log(n)\bigr)+2.34\Bigr)
      & \text{for arbitrary}\ n,
    \end{cases}
  \]
  the linear system
  \[
    \bigl\lvert K_X+(n+\ell)(K_X+mL)+A\bigr\rvert
  \]
  is \(\ell\)-jet ample.
  In particular, the linear system
  \[
    \biggl\lvert
    K_X+(n+\ell)\Bigl(K_X+n\Bigl(\log\bigl(\log(n)\bigr)+2.34\Bigr)L\Bigr)+L
    \biggr\rvert
  \]
  is \(\ell\)-jet ample.
\end{customthm}
\begin{proof}
  As before, \(K_X+mL\) is ample by Mori's cone theorem \cite[Theorem
  1.5]{Mor82} (see \cite[Theorem 3.1\((i)\)]{MT81}).
  By the known cases of Fujita's freeness conjecture
  \citeleft\citen{Rei88}\citepunct \citen{EL93a}\citepunct \citen{Fuj93}\citepunct
  \citen{Kaw97}\citepunct \citen{YZ20}\citeright\ and by \cite[Theorems 1.1 and
  1.2]{GL24}, we know that \(K_X+mL\) is free.
  By \cite[Example 5.1.18]{Laz04a}, we know that \(\varepsilon(K_X+mL;x) \ge 1\)
  for all \(x \in X\).
  We then see that
  \[
    \varepsilon\bigl((n+\ell)(K_X+mL)+A;x\bigr) \ge
    (n+\ell)\cdot\varepsilon(K_X+mL;x) + \varepsilon(A;x) > n+\ell.
  \]
  The conclusion now follows from Theorem
  \ref{thm:introseshjetample}.
\end{proof}

\end{document}